\documentclass[12pt]{article}
\usepackage[utf8]{inputenc}
\usepackage{geometry}
\usepackage{graphicx}
\usepackage{appendix}
\geometry{margin=1 in}
\usepackage{graphicx}
\usepackage[natural]{xcolor}
\usepackage{pgf,tikz,pgfplots}
\usepackage{tikz-3dplot}
\usepackage{pifont}
\usepackage{refcount}
\title{Existence of $2$-Factors in Tough Graphs without Forbidden Subgraphs}

\author{ 
    {\large Elizabeth Grimm$^1$, Anna Johnsen$^2$, Songling Shan$^1$}\smallskip \vspace{-.2cm} \\ 
	\medskip  \normalsize{$^1$Department of Mathematics, Illinois State University, Normal, IL 61790}\vspace{-.33cm}\\ 
	\medskip  \normalsize{$^2$Department of Mathematics and Statistics, Georgia State University, Atlanta, GA 30303}\vspace{-.33cm}\\ 
	{\normalsize {\tt evgrimm@ilstu.edu, ajohnsen2@student.gsu.edu, sshan12@ilstu.edu}}
}

\date{\today}
\usepackage[shortlabels]{enumitem}
\usepackage{amssymb}
\usepackage{amsfonts}
\usepackage{amsthm}
\usepackage{graphicx}
\usepackage{amsmath}
\usepackage{indentfirst}
\newtheorem{THM}{Theorem}

\newtheorem{LEM}[THM]{Lemma}
\newtheorem{REM}[THM]{Remark}
\newtheorem*{theorem*}{Theorem}
\newtheorem{CLAIM}{Claim}
\newcommand{\CC}{\mathcal{C}}
\newcommand{\pf}{\textbf{Proof}.\quad}
\usepackage[
pdfauthor={},
pdftitle={},
pdfstartview=XYZ,
bookmarks=true,
colorlinks=true,
linkcolor=blue,
urlcolor=blue,
citecolor=blue,
bookmarks=false,
linktocpage=true,
hyperindex=true
]{hyperref}
\linespread{1.12}
\setlength{\parskip}{.12in}
\begin{document}
\newcommand{\oC}{\overset{\rightharpoonup }{C}}
\newcommand{\iC}{\overset{\leftharpoonup }{C}}
\providecommand{\keywords}[1]
{
  \small	
  \textbf{\textit{Keywords: }} #1
}

\maketitle

\begin{abstract}
For a given graph $R$, 
a graph $G$ is $R$-free if $G$ does not contain $R$ as an induced subgraph. It is known that every $2$-tough graph with at least three vertices has a $2$-factor. In graphs with restricted structures, it was shown that every $2K_2$-free $3/2$-tough graph  with at least three vertices has a $2$-factor, and the toughness bound $3/2$
is best possible. In viewing $2K_2$, the disjoint union of two edges, as a linear forest, 
in this paper, for any linear forest $R$
on  5, 6, or 7 vertices, we find the sharp toughness bound $t$ such that every $t$-tough $R$-free graph on at least three vertices has a 2-factor.  
%
%
\end{abstract}

\keywords{$2$-factor, toughness, forbidden subgraphs}

\section{Introduction} \label{section:intro}
Let $G$ be a simple, undirected graph and let $V(G)$ and $E(G)$ denote the vertex set and the edge set of $G$, respectively.
We denote the set of neighbors of a vertex $x\in V(G)$ by $N_{G}(x)$. The closed neighborhood of a vertex $x$ in $G$, denoted by $N_G[x]$, is the set $\{x\} \cup
N_G(x)$. For any subset $S \subseteq V(G)$, $G[S]$ is the subgraph of $G$ induced by $S$, $G - S$ denotes the subgraph $G[V(G) \setminus S]$, and $N_G(S) = \bigcup_{v \in S} N_{G}(v)$. Given disjoint subsets $S$ and $T$ of $V(G)$, we denote by $E_{G}(S,T)$ the set of edges which have one end vertex in $S$ and the other end vertex in $T$, and let $e_G(S,T)=|E_G(S,T)|$.
If  $S=\{s\}$ is a singleton,
we write $e_G(s, T)$ for $e_G(\{s\}, T)$.
If $H\subseteq G$ is a subgraph of $G$, and $T\subseteq V(G)$ with
$T\cap V(H)=\emptyset$, we write $E_G(H,T)$ and $e_G(H,T)$
for notational simplicity.

For a given graph $R$, we say that $G$ is $R$-free if there does not exist an induced copy of $R$ in $G$. 
For integers $a$ and $b$ with $a\ge 0$ and $b\ge 1$, we denote by $aP_b$ the graph consisting of $a$ disjoint copies of the path $P_b$. When $a=1$, $1P_b$ is just $P_b$, and when $a=0$, $0P_b$ is the null graph. 
For two integers $p$ and $q$, let $[p,q]=\{i\in \mathbb{Z}: p\le  i \le q\}$.

Denote by $c(G)$ the number of components of $G$. 
Let $t \geq 0$ be a real number. We say a graph $G$ is $t$-tough if for each cutset $S$ of $G$ we have $t \cdot c(G-S) \leq |S|$. The toughness of a graph $G$, denoted $\tau(G)$, is the maximum value of $t$ for which $G$ is $t$-tough if $G$ is non-complete and is defined to be $\infty$ if $G$ is complete. 



For an integer $k\ge 1$, a $k$-regular spanning subgraph is a \emph{$k$-factor} of $G$. It is well known, according to a theorem by Enomoto, Jackson, Katerinis, and  Saito~\cite{ENOMOTO1998277} from 1998,  that every $k$-tough graph with at least three vertices has a $k$-factor  if $k|V(G)|$ is even and $|V(G)|\ge k+ 1$. In terms of a sharp toughness bound, 
particular research interest has been taken when $k=2$ for graphs with restricted structures.  For example, it was shown that every $3/2$-tough 5-chordal graph (graphs with no induced cycle of length at least 5) on at least three vertices has a 2-factor~\cite{5chordal} and that every $3/2$-tough $2K_2$-free graph on at least three vertices has a 2-factor~\cite{Ota2021Toughness2A}. 
The toughness bound $3/2$ is best possible in both results.

A \emph{linear forest} is a graph consisting of disjoint paths. In viewing $2K_2$
as a linear forest on 4 vertices and the result by Ota and Sanka~\cite{Ota2021Toughness2A} that every $3/2$-tough $2K_2$-free graph on at least three vertices has a 2-factor, we investigate  the existence of 2-factors in $R$-free graphs when $R$
is a linear forest on 5, 6, or 7 vertices. These graphs  $R$ are listed below, where the unions are 
vertex disjoint unions. 
\begin{flushright}
\begin{enumerate}
	\item \quad  $P_5$ \quad  $P_4\cup P_1$\quad  $P_3\cup P_2$ \quad  $P_3\cup 2P_1$ \quad  $2P_2\cup P_1$ \quad  $P_2\cup 3P_1 \quad 5P_1$; 
	\item \quad $P_6$ \quad $P_5\cup P_1$ \quad $P_4\cup P_2$ \quad $P_4\cup 2P_1$\quad $2P_3$ \quad $P_3\cup P_2\cup P_1$\quad $P_3\cup 3P_1$ \quad $3P_2$   \quad $2P_2\cup 2P_1$ \quad $P_2\cup 4P_1$\quad $6P_1$; 
	\item \quad  $P_7$ \quad $P_6\cup P_1$ \quad $P_5\cup P_2$ \quad $P_5\cup 2P_1$ \quad $P_4\cup P_3$ \quad $P_4\cup P_2\cup P_1$ \quad $P_4 \cup 3P_1$ \quad $2P_3 \cup P_1$ \quad $P_3\cup 2P_2$ \quad $P_3 \cup P_2\cup 2P_1$ \quad $P_3 \cup 4P_1$ \quad $3P_2\cup P_1$ \quad $2P_2\cup 3P_1$ \quad $P_2\cup 5P_1$ \quad $7P_1$.
\end{enumerate} 
\end{flushright}

Our main results are the following:
\begin{THM}\label{theorem1}
	Let $t>0$ be a real number, $R$ be any linear forest on $5$ vertices, and $G$ be a $t$-tough $R$-free graph on at least 3 vertices. Then $G$ has a 2-factor provided that 
	\begin{enumerate}[(1)]
		\item $R \in \{P_4 \cup P_1, P_3\cup 2P_1, P_2\cup 3P_1\}$  and $t=1$ unless
		\begin{enumerate}
			\item $R = P_2 \cup 3P_1$, and $G \cong H_0$ or $G$ contains $H_1$, $H_2$ or $H_3$ as a spanning subgraph such that $E(G)\setminus E(H_i) \subseteq E_G(S, V(G)\setminus (T\cup S))$ for each $i\in [1,3]$, where $H_i$, $S$ and $T$ are defined in Figure~\ref{f1}.   
			\item $R=P_3 \cup 2P_1$ and $G$ contains $H_1$ as a spanning subgraph such that $E(G)\setminus E(H_1) \subseteq E_G(S, V(G)\setminus (T\cup S))$.   
		\end{enumerate}
		\item $R = 5P_1$ and $t>1$. 
		\item $R\in \{P_5,  P_3\cup P_2, 2P_2\cup P_1\}$ and $t=3/2$. 
		\end{enumerate}
\end{THM}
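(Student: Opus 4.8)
The main tool is the classical $2$-factor theorem (the $f$-factor theorem of Tutte specialized to $f\equiv 2$): a graph $G$ has a $2$-factor if and only if
\[
\theta_G(S,T):=2|S|+\sum_{x\in T}d_{G-S}(x)-2|T|-q_G(S,T)\ \ge\ 0
\]
for every pair of disjoint sets $S,T\subseteq V(G)$, where $q_G(S,T)$ is the number of components $C$ of $G-(S\cup T)$ with $e_G(C,T)$ odd. Since $\sum_{x\in T}d_{G-S}(x)\equiv q_G(S,T)\pmod 2$, the quantity $\theta_G(S,T)$ is always even, so a graph with no $2$-factor admits a pair with $\theta_G(S,T)\le-2$. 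The plan is to assume, in each case of the theorem, that $G$ satisfies the hypotheses but has no $2$-factor, to fix such a ``bad'' pair $(S,T)$ with $|S|+|T|$ minimum and then $|T|$ minimum, and to reach a contradiction --- or, in the two exceptional sub-cases, to pin down $G$ exactly. Observe first that $\sum_{x\in T}d_{G-S}(x)\ge e_G(T,V(G)\setminus(S\cup T))\ge q_G(S,T)$, so $\theta_G(S,T)\le-2$ already forces $|T|\ge|S|+1$. After this I would run the standard local exchange reductions for $2$-factor deficiency (moving one vertex at a time among $S$, $T$, and $V(G)\setminus(S\cup T)$, as in \cite{ENOMOTO1998277} and \cite{Ota2021Toughness2A}) to arrange favourable structure --- for instance that $T$ is independent, that $d_{G-S}(x)\le 1$ for each $x\in T$, and that the components of $G-(S\cup T)$ counted by $q_G(S,T)$ are controlled.

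With the bad pair in canonical form, toughness enters. Whenever $q_G(S,T)\ge 2$, the set $S\cup T$ is a cutset, so $|S|+|T|\ge t\,c(G-(S\cup T))\ge t\,q_G(S,T)$; and writing $T_0=\{x\in T:N_G(x)\subseteq S\}$, each vertex of $T_0$ is isolated in $G-S$, giving a second toughness inequality relating $|S|$ to $|T_0|$ and the number of components of $G-(S\cup T)$. Combining these with $|T|\ge|S|+1$ and the exact value of $\theta_G(S,T)$ confines $|S|$, $|T|$, and $q_G(S,T)$ to a small range. The degenerate possibilities left open (e.g.\ $q_G(S,T)\le 1$, or $G-(S\cup T)$ or $G-S$ connected) would be disposed of directly, typically by exhibiting a vertex of degree at most $1$, or a cutset too small for the stated toughness, or by using $|V(G)|\ge 3$.

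The $R$-free hypothesis is then used to finish. Choosing one vertex from each of the $q_G(S,T)$ counted components yields an independent set $I$ of size $q_G(S,T)$, each vertex of which has a neighbour in $T$; since $T$ is independent and $T_0$ is anticomplete to $I$, the set $T_0\cup I$ is independent, and a transversal of the components of $G-S$ is independent as well. When $R=5P_1$ we have $\alpha(G)\le 4$, so $q_G(S,T)\le 4$ and $c(G-S)\le 4$; combined with $t>1$ and the inequalities above this leaves only finitely many profiles of $(|S|,|T|,q_G(S,T))$, each eliminated by a short argument. For the remaining forests I would show that, once the parameters exceed the small thresholds the inequalities permit, an induced copy of $R$ can always be assembled from $I$, $T$, and a bounded number of vertices of $S$: because $I$ and $T$ are independent and distinct vertices of $I$ lie in distinct components, one has complete control over the edges inside $I\cup T$, and a few vertices of $S$ can supply the interiors of the required paths, so the task reduces to guaranteeing the path edges and the absence of all other edges. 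I expect this last, heavily case-based, step to be the main obstacle --- especially for $R=P_3\cup2P_1$ and $R=P_2\cup3P_1$ at $t=1$, where the residual configurations must be tracked precisely enough to show that the only $t$-tough $R$-free graphs on at least three vertices without a $2$-factor are $H_0$ and the graphs obtained from $H_1,H_2,H_3$ as described in Figure~\ref{f1}, rather than merely to bound their order.
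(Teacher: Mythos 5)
Your outline starts from the right place (Tutte's $2$-factor criterion, an extremal deficient pair, toughness counting, then building an induced copy of $R$), but three of its load-bearing steps do not hold up. First, the normalization you propose -- arranging that $d_{G-S}(x)\le 1$ for each $x\in T$ -- is not achievable in general (removing such an $x$ from $T$ changes $\theta$ by $2-d_{G-S}(x)$ plus a correction from merging odd components, which need not keep the pair deficient), and it would in any case destroy exactly the structure the argument needs: for $R\in\{P_5,P_3\cup P_2,2P_2\cup P_1\}$ the paper's proof hinges on showing that some $y_0\in T$ is adjacent to \emph{two} distinct odd components, because threading $y_0$ between them (and out to two further vertices of $T$) is what produces an induced $P_b\cup aP_1$ with $b\ge 7$, of which all three forests are induced subgraphs. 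What can be arranged (for the barrier maximizing $|S|$ and then minimizing $|T|$) is that $T$ is independent, that each vertex of an odd component has at most one neighbour in $T$, and that each $y\in T$ sends at most one edge to each odd component -- not that $d_{G-S}(y)\le 1$. Second, your toughness inequality $|S|+|T|\ge t\,q_G(S,T)$ uses the wrong cutset and is too weak to close the $t=3/2$ case. The decisive device is to delete, from each odd component $D$ with $e_G(D,T)=2k+1$, only $2k$ of its $2k+1$ attachment vertices: the resulting cutset $W=S\cup\bigcup_D W_D$ has size $|S|+\sum_{k\ge1}2k|\mathcal{C}_{2k+1}|$ yet isolates every vertex of $T$ into its own component, so $c(G-W)\ge|T|$; combining $|W|\ge\tfrac32|T|$ with $|T|\ge|S|+\sum_{k\ge1}k|\mathcal{C}_{2k+1}|+1$ yields the contradiction (after first ruling out that every $y\in T$ meets at most one odd component). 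Nothing of comparable strength follows from cutting at $S\cup T$.

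Third, the parts you defer are where essentially all the work lies, and your sketch does not indicate how they would be done. For $R=5P_1$ the clean route is to show $|T|\ge 5$ whenever $t>1$ (this already needs the $W_D$-cutset trick to dispose of $|T|=4$), which contradicts $T$ being independent in a $5P_1$-free graph; your ``finitely many profiles'' plan is workable only if you first prove such a lower bound on $|T|$. For $R\in\{P_3\cup 2P_1,\,P_2\cup 3P_1\}$ at $t=1$, the induced $P_4\cup aP_1$ with $a=|T|-2$ coming from any odd component forces $|T|=3$, after which $|S|+|\bigcup_k\mathcal{C}_{2k+1}|=2$ and one must exhaustively analyse the two resulting configurations (two odd components with $S=\emptyset$, or one odd component with $|S|=1$), pinning down the component structure vertex by vertex to arrive at exactly $H_0$ and the spanning-subgraph descriptions via $H_1,\dots,H_4$. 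As written, your proposal acknowledges this as ``the main obstacle'' but supplies no mechanism for it, so the exceptional-graph clause of the statement is not established.
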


\begin{figure}[!htb]
\begin{tikzpicture}[scale=1]
	\begin{scope}[scale=0.5]

		\coordinate (D1) at (-4,4.5);
		\coordinate (D2) at (4,4.5);
		\coordinate (T) at (0,0);
		\node (d1) at (-4,5) {$D_1$};
		\node (d2) at (4,5) {$D_2$};
		\node (t) at (0,-0.5) {$T$};
		
		\coordinate (v1) at (-5,3.75);
		\coordinate (v2) at (-4,3.5);
		\coordinate (v3) at (-3,3.75);
		\coordinate (v4) at (3,3.75);
		\coordinate (v5) at (4,3.5);
		\coordinate (v6) at (5,3.75);
		\coordinate (v7) at (-2,0.6);
		\coordinate (v8) at (0,0.6);
		\coordinate (v9) at (2,0.6);

		\node[circle,fill=black,inner sep=1pt,label=$v_1$] at (v1) {};
		\node[circle,fill=black,inner sep=1pt, label=$v_2$] at (v2) {};
		\node[circle,fill=black,inner sep=1pt, label=$v_3$] at (v3) {};
		
		\node[circle,fill=black,inner sep=1pt, label=$v_4$] at (v4) {};
		\node[circle,fill=black,inner sep=1pt, label=$v_5$] at (v5) {};
		\node[circle,fill=black,inner sep=1pt, label=$v_6$] at (v6) {};
		
		\node[circle,fill=black,inner sep=1pt, label=$t_1$] at (v7) {};
		\node[circle,fill=black,inner sep=1pt, label=$\,\,\,t_2$] at (v8) {};
		\node[circle,fill=black,inner sep=1pt, label=$ t_3$] at (v9) {};
		
		
		\draw (v1) to (v7) to (v4);
		\draw (v2) to (v8) to (v5);
		\draw (v3) to (v9) to (v6);
		\draw (v1) to (v2) to (v3) to (v1);
		\draw (v4) to (v5) to (v6) to (v4);
		\node () at (0,-2.5) {The graph $H_0$};
			\end{scope}
		
		\begin{scope}[shift={(8,0)},scale=0.5]
		\coordinate (S) at (-4,4);
		\coordinate (D2) at (4,4);
		\coordinate (Da) at (4,4.75);
		\coordinate (T) at (0,0);
		\coordinate (Tb) at (0,-0.5);
		\node (d2) at (4,5) {$D$};
		\node (t) at (0,-1) {$T$};
		\node (s) at (-4.5,4)  {$x$};
		
		\coordinate (v4) at (2.5,3.25);
		\coordinate (v5) at (4.5,4);
		\coordinate (v6) at (5.5,3.25);
		\coordinate (v7) at (-2,0.25);
		\coordinate (v8) at (0,0.25);
		\coordinate (v9) at (2,0.25);

		\node[circle,fill=black,inner sep=1pt,label=$S$] at (S) {};

		\node[circle,fill=black,inner sep=1pt, label=$v_1$] at (v4) {};
		\node[circle,fill=black,inner sep=1pt, label=left:$v_2$] at (v5) {};
		\node[circle,fill=black,inner sep=1pt, label=$v_3$] at (v6) {};
		
		\node[circle,fill=black,inner sep=1pt, label=$t_1$] at (v7) {};
		\node[circle,fill=black,inner sep=1pt, label=$t_2$] at (v8) {};
		\node[circle,fill=black,inner sep=1pt, label=$t_3$] at (v9) {};
		
		
		\draw (S) to (v7) to (v4);
		\draw (S) to (v8) to (v5);
		\draw (S) to (v9) to (v6);
		\draw (v4) to (v5) to (v6) to (v4);
		\node () at (0,-2.5) {The graph $H_1$};
		\end{scope}
	
	\begin{scope}[shift={(0,-5)}, scale=0.4]
	\coordinate (S) at (-4,4);
	\coordinate (D2) at (4,4);
	\coordinate (Da) at (4,4.75);
	\coordinate (T) at (0,0);
	\coordinate (Tb) at (0,-0.5);
	\node (d2) at (4,5) {$D$};
	\node (t) at (0, -1) {$T$};
	\node (s) at (-4.5,4)  {$x$};
	
	\coordinate (v3) at (3.25,3.15);
	\coordinate (v4) at (4.75,3.15);
	\coordinate (v5) at (2.5,4);
	\coordinate (v6) at (5.5,4);
	\coordinate (v7) at (-2,0.25);
	\coordinate (v8) at (0,0.25);
	\coordinate (v9) at (2,0.25);
	
	\node[circle,fill=black,inner sep=1pt,label=$S$] at (S) {};
	
	\node[circle,fill=black,inner sep=1pt, label=left:$v_1$] at (v3) {};
	\node[circle,fill=black,inner sep=1pt, label=right:$v_2$] at (v4) {};
	\node[circle,fill=black,inner sep=1pt, label=$v_3$] at (v5) {};
	\node[circle,fill=black,inner sep=1pt, label=$v_4$] at (v6) {};
	
	\node[circle,fill=black,inner sep=1pt, label=$t_1$] at (v7) {};
	\node[circle,fill=black,inner sep=1pt, label=$t_2$] at (v8) {};
	\node[circle,fill=black,inner sep=1pt, label=$t_3$] at (v9) {};
	
	
	\draw (S) to (v7) to (v5) to (v3);
	\draw (S) to (v8) to (v3);
	\draw (S) to (v9) to (v4) to (v6);
	\draw (v3) to (v4) to (v5) to (v6) to (v3);
		\node () at (0,-2.5) {The graph $H_2$};
	\end{scope}

\begin{scope}[shift={(5,-5)}, scale=0.4]
\coordinate (S) at (-4,4);
\coordinate (D2) at (4,4);
\coordinate (Da) at (4,4.75);
\coordinate (T) at (0,0);
\coordinate (Tb) at (0,-0.5);
\node (d2) at (4,5) {$D$};
\node (t) at (0,-1) {$ T$};
\node (s) at (-4.5,4)  {$x$};

\coordinate (v3) at (3.25,3.15);
\coordinate (v4) at (4.75,3.15);
\coordinate (v5) at (2.5,4);
\coordinate (v6) at (5.5,4);
\coordinate (v7) at (-2,0.25);
\coordinate (v8) at (0,0.25);
\coordinate (v9) at (2,0.25);

\node[circle,fill=black,inner sep=1pt,label=$S$] at (S) {};

\node[circle,fill=black,inner sep=1pt, label=left:$v_1$] at (v3) {};
\node[circle,fill=black,inner sep=1pt, label=right:$v_2$] at (v4) {};
\node[circle,fill=black,inner sep=1pt, label=$v_3$] at (v5) {};
\node[circle,fill=black,inner sep=1pt, label=$v_4$] at (v6) {};

\node[circle,fill=black,inner sep=1pt, label=$t_1$] at (v7) {};
\node[circle,fill=black,inner sep=1pt, label=$t_2$] at (v8) {};
\node[circle,fill=black,inner sep=1pt, label=$t_3$] at (v9) {};

\draw (S) to (v7) to (v5) to (v3);
\draw (S) to (v8) to (v3);
\draw (S) to (v9) to (v4) to (v6);
\draw (v3) to (v4);
\draw (v5) to (v6) to (v3);
\node () at (0,-2.5) {The graph $H_3$};
\end{scope}

\begin{scope}[shift={(10,-5)}, scale=0.4]
	\coordinate (S) at (-4,4);
	\coordinate (D2) at (4,4);
	\coordinate (Da) at (4,4.75);
	\coordinate (T) at (0,0);
	\coordinate (Tb) at (0,-0.5);
	\node (d2) at (4,5) {$D$};
	\node (t) at (0,-1) {$ T$};
	\node (s) at (-4.5,4)  {$x$};
	
	\coordinate (v3) at (3.25,3.15);
	\coordinate (v4) at (4.75,3.15);
	\coordinate (v5) at (2.5,4);
	\coordinate (v6) at (5.5,4);
	\coordinate (v7) at (-2,0.25);
	\coordinate (v8) at (0,0.25);
	\coordinate (v9) at (2,0.25);
	
	\node[circle,fill=black,inner sep=1pt,label=$S$] at (S) {};
	
	\node[circle,fill=black,inner sep=1pt, label=left:$v_1$] at (v3) {};
	\node[circle,fill=black,inner sep=1pt, label=right:$v_2$] at (v4) {};
	\node[circle,fill=black,inner sep=1pt, label=$v_3$] at (v5) {};
	\node[circle,fill=black,inner sep=1pt, label=$v_4$] at (v6) {};
	
	\node[circle,fill=black,inner sep=1pt, label=$t_1$] at (v7) {};
	\node[circle,fill=black,inner sep=1pt, label=$t_2$] at (v8) {};
	\node[circle,fill=black,inner sep=1pt, label=$t_3$] at (v9) {};

	\draw (S) to (v7) to (v5);
	\draw (S) to (v8) to (v3);
	\draw (S) to (v9) to (v4) to (v6);
	\draw (v3) to (v4);
	\draw (v5) to (v6) to (v3);
	\node () at (0,-2.5) {The graph $H_4$};
\end{scope}
\end{tikzpicture}
\caption{The four exceptional graphs for Theorem~\ref{theorem1}(1), where  $S=\{x\}$ and $T=\{t_1,t_2,t_3\}$.}
\label{f1}
\end{figure}
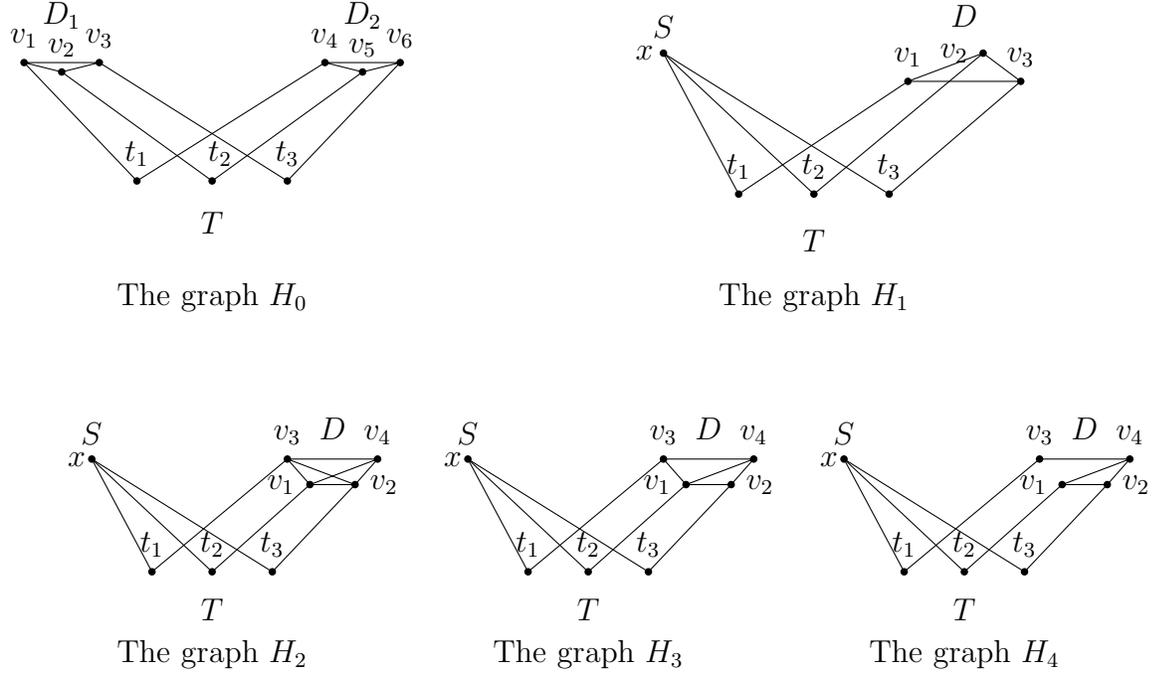

\begin{THM}\label{theorem2}
	Let $t>0$ be a real number, $R$ be any linear forest on $6$ vertices, and $G$ be a $t$-tough $R$-free graph on at least 3 vertices. Then $G$ has a 2-factor provided that 
	\begin{enumerate}[(1)]
		\item $R\in \{P_4\cup 2P_1, P_3\cup 3P_1, P_2\cup 4P_1, 6P_1\}$ and $t>1$ unless $R=6P_1$ and $G$ contains $H_5$ with $p=5$ as a spanning subgraph such that $E(G)\setminus E(H_5) \subseteq E_G(S, V(G)\setminus (T\cup S))$, where $H_5$, $S$ and $T$ are defined in Figure~\ref{f2}.  
		\item $R\in \{P_6,  P_5\cup P_1, P_4\cup P_2, 2P_3, P_3\cup P_2\cup P_1, 3P_2, 2P_2\cup 2P_1\}$ and $t=3/2$. 
	\end{enumerate}
\end{THM}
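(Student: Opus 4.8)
The plan is to follow the scheme used for Theorem~\ref{theorem1}: set up the classical $f$-factor deficiency criterion for ``$G$ has a $2$-factor'', combine it with $t$-toughness to obtain a numerical inequality, and then eliminate the surviving configurations one forbidden linear forest at a time using $R$-freeness. So suppose $G$ has no $2$-factor. As $|V(G)|\ge 3$, a complete $G$ would have a Hamilton cycle and hence a $2$-factor, so $G$ is non-complete and $t$-toughness is a genuine constraint. By the classical $f$-factor theorem applied to the constant weight $2$ (together with a parity argument) there are disjoint $S,T\subseteq V(G)$ with
$$
2|S|-2|T|+\sum_{x\in T}d_{G-S}(x)-q(S,T)\ \le\ -2 ,
$$
where $q(S,T)$ is the number of components $C$ of $G-(S\cup T)$ with $e_G(C,T)$ odd. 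Choosing such a pair extremally, exactly as in the proof of Theorem~\ref{theorem1}, makes the usual reductions available: $T\ne\emptyset$, $T$ is independent, and (treating separately the case of a vertex of $T$ with neighbours in two distinct components of $G-(S\cup T)$, which yields an induced $P_3$ with ends in different components) each vertex of $T$ has at most one neighbour outside $S$. Write $C_1,\dots,C_c$ for the components of $G-(S\cup T)$ and $T'$ for the set of those vertices of $T$ having exactly one neighbour outside $S$; then $\sum_{x\in T}d_{G-S}(x)=|T'|$, $c(G-S)=c+|T|-|T'|$, and $q(S,T)\le c$.

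The second ingredient is the toughness inequality. The degenerate cases --- $S=\emptyset$, or $G-S$ with at most one component --- are disposed of directly: in each of them $G$ has a vertex cut of ratio less than $1$ (for instance, deleting the unique outside-$S$ neighbour of a vertex of $T'$ isolates that vertex), which contradicts $t>1$ and $t=3/2$ alike. Otherwise $S$ is a cut, so $t\cdot(c+|T|-|T'|)\le|S|$; feeding this into the deficiency inequality written as $2|S|\le 2|T|-|T'|+q(S,T)-2\le 2|T|-|T'|+c-2$ and simplifying gives a relation of the shape
$$
(2t-1)\bigl(|T'|-c\bigr)\ \ge\ 2(t-1)|T|+2 .
$$
For part~(1), where $t>1$, this forces $|T'|\ge c+1$. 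For part~(2), where $t=3/2$, it forces, using $|T'|\le|T|$, the stronger bound $|T'|\ge 2c+2$; hence $c\ge1$, $|T'|\ge4$, and, averaging the $|T'|$ edges from $T'$ to $C_1\cup\dots\cup C_c$ over the $c$ components, some component receives at least three of them.

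The third and longest step is the forbidden-subgraph analysis. The raw material is: one vertex per component together with the $|T|-|T'|$ vertices of $T$ that are isolated in $G-S$ form an independent set of size $c(G-S)$; a vertex $x\in T'$ with its unique outside-$S$ neighbour $y$ forms an induced $K_2$ anticomplete to every component not containing $y$ and to $T\setminus\{x\}$; two vertices of $T'$ attached to one component at distinct vertices give, via a shortest path inside that component, an induced path of length at least four through both of them; and these pieces splice together into larger induced linear forests. For the four forests of~(1), each of which contains at least two isolated vertices, $R$-freeness first bounds the independence number of $G-S$, hence $c(G-S)$, and then forces $|T|,|T'|,|S|$ to lie below absolute constants. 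For the seven forests of~(2), each of which contains an induced $2K_2$, the argument refines that of Ota and Sanka~\cite{Ota2021Toughness2A} (which settles the case $R=2K_2$), with the components now supplying the longer induced paths these larger forests tolerate. In every case one is reduced to finitely many small values of $(c,|T|,|T'|,|S|)$, and for each of these either the displayed inequality fails --- a contradiction --- or $t>1$, $R=6P_1$, and $G$ is pinned down to be the graph in the statement: it contains $H_5$ with $p=5$ as a spanning subgraph with every further edge lying in $E_G(S,V(G)\setminus(T\cup S))$. Finally, the bound $t=3/2$ in~(2) is sharp because the $2K_2$-free extremal graph of~\cite{Ota2021Toughness2A} is $R$-free for every $R$ in the list (each such $R$ containing an induced $2K_2$), and the exceptional and extremal graphs of Figures~\ref{f1}--\ref{f2} account for the sharpness in~(1).

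\textbf{Where the difficulty lies.} I expect almost all the work --- and the genuine obstacle --- to be in the third step, and within it the two ``long path'' forests $P_6$ and $P_5\cup P_1$ in~(2): for these $R$-freeness gives no bound on the independence number, so $c(G-S)$ cannot be bounded cheaply, and the contradiction must instead come from the interplay of the connectivity of the components $C_i$ with the many pendant-like vertices of $T'$. Precisely when a component is too clique-like to carry a long induced path, one is forced onto a different induced configuration, and matching that configuration to the forest at hand, uniformly across all sub-cases, while at the same time isolating the single genuine exception (for $R=6P_1$), is the delicate part. A secondary point to check is that the extremal choice of $(S,T)$ and the ensuing reductions transfer verbatim from the proof of Theorem~\ref{theorem1}.
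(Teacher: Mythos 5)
Your overall architecture (Tutte barrier, toughness counting, forbidden-subgraph extraction) matches the paper's, but there is a genuine gap at the central reduction, and it is fatal for part (2). You assert that, after choosing $(S,T)$ extremally, each vertex of $T$ has at most one neighbour outside $S$, relegating the case of a $T$-vertex with neighbours in two distinct components to a parenthetical. The biased-barrier properties only give $e_G(y,D)\le 1$ for each odd component $D$ \emph{separately}; a vertex of $T$ may meet several components, and the paper proves (its Claim on the existence of $y\in T$ with $h(y)\ge 2$, combined with Lemma~\ref{lem:P4}(2)) that for $t=3/2$ such a vertex $y_0$ \emph{must} exist --- this is precisely the engine that produces an induced $P_b\cup aP_1$ with $b\ge 7$, namely a path $y_1x_1^*P_1x_1y_0x_2P_2x_2^*y_2$ threaded through two different components and joined at $y_0$, from which all seven forests of part (2) are then extracted. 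Your dichotomy inverts this: the case you keep (every $T$-vertex meets at most one component) can only yield induced paths of the form $y_1x_1Px_2y_2$ inside a single component, which collapse to $P_4$ when that component is clique-like, so no induced $P_6$, $2P_3$, $3P_2$, etc.\ can be forced there. Moreover your numerical conclusion in that case, $|T'|\ge 2c+2$, is not by itself a contradiction: it is compatible with the barrier inequality. The paper's corresponding claim instead derives a contradiction by cutting with $S\cup\bigcup_{D} W_D$, where $W_D$ consists of $2k$ attachment vertices chosen \emph{inside} each $D\in\mathcal{C}_{2k+1}$ --- a strictly stronger cutset than the bare $S$ your computation uses. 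Without either the long two-component path (which you discard) or the refined cut (which you do not use), part (2) does not go through.

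Secondarily, for part (1) your plan (``bound the independence number, reduce to finitely many values of $(c,|T|,|T'|,|S|)$, check each'') is only an outline: isolating the unique exception for $R=6P_1$, namely that $G$ contains $H_5$ with $p=5$ as a spanning subgraph, requires the detailed analysis the paper carries out (forcing $|T|=5$, a single odd component $D$ with $e_G(D,T)=5$, $|V(D)|=5$ and $D=K_5$), none of which is supplied or reduced to a routine check. As written, the proposal identifies the right general strategy for part (1) but does not constitute a proof of either part.
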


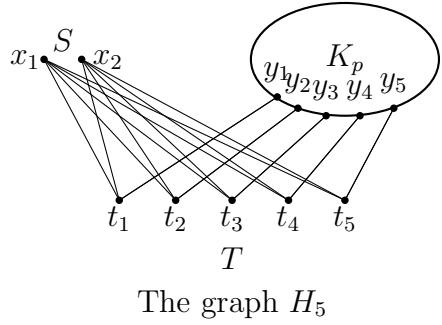
\begin{figure}[!htb]
	\begin{center}
		
		\begin{tikzpicture}[scale=1]
			
			\begin{scope}[scale=0.5]

				\coordinate (s1) at (-5,4);
				\coordinate (s2) at (-4,4);
				\coordinate (S) at (-4.5,4);
				\coordinate (sl) at (-4.5,4.5);
				\coordinate (D) at (3,4);
				\coordinate (Da) at (3,4);
				\coordinate (T) at (0,0);
				\coordinate (Tb) at (0,-1.3);
				\node (d2) at (Da) {$K_p$};
				\node (t) at (Tb) {$T$};
				\node (SL) at (sl) {$S$};
				\node (SL) at (-5.5,4) {$x_1$};
				\node (SL) at (-3.3,4) {$x_2$};
				
				\coordinate (v1) at (-3,0.25);
				\coordinate (v2) at (-1.5,0.25);
				\coordinate (v3) at (0,0.25);
				\coordinate (v4) at (1.5,0.25);
				\coordinate (v5) at (3,0.25);
				
				\node (t) at (-3,0.25-0.5) {$t_1$}; 
				\node (t) at (-1.5,0.25-0.5) {$t_2$};
				\node (t) at (0,0.25-0.5) {$t_3$};
				\node (t) at (1.5,0.25-0.5) {$t_4$};
				\node (t) at (3,0.25-0.5) {$t_5$};
				
				\coordinate (v6) at (1.2,3);
				\coordinate (v7) at (1.75,2.7);
				\coordinate (v8) at (2.5,2.5);
				\coordinate (v9) at (3.4,2.5);
				\coordinate (v0) at (4.3,2.7);

				\node[circle,fill=black,inner sep=1pt,label=] at (s1) {};
				\node[circle,fill=black,inner sep=1pt,label=] at (s2) {};

				\node[circle,fill=black,inner sep=1pt, label=] at (v1) {};
				\node[circle,fill=black,inner sep=1pt, label=] at (v2) {};
				\node[circle,fill=black,inner sep=1pt, label=] at (v3) {};
				\node[circle,fill=black,inner sep=1pt, label=] at (v4) {};
				\node[circle,fill=black,inner sep=1pt, label=] at (v5) {};
				\node[circle,fill=black,inner sep=1pt, label=$y_1$] at (v6) {};
				\node[circle,fill=black,inner sep=1pt, label=$y_2$] at (v7) {};
				\node[circle,fill=black,inner sep=1pt, label=$y_3$] at (v8) {};
				\node[circle,fill=black,inner sep=1pt, label=$y_4$] at (v9) {};
				\node[circle,fill=black,inner sep=1pt, label=$y_5$] at (v0) {};
				
				\draw[thick] (D) ellipse (2.5 and 1.5);
				
				\draw (s1) to (v1) to (v6);
				\draw (s1) to (v2) to (v7);
				\draw (s1) to (v3) to (v8);
				\draw (s1) to (v4) to (v9);
				\draw (s1) to (v5) to (v0);
				
				\draw (s2) to (v1) to (v6);
				\draw (s2) to (v2) to (v7);
				\draw (s2) to (v3) to (v8);
				\draw (s2) to (v4) to (v9);
				\draw (s2) to (v5) to (v0);
				\node () at (0,-2.5) {The graph $H_5$};
			\end{scope}

		\end{tikzpicture}
		
	\end{center}
	\caption{The  exceptional graph for Theorem~\ref{theorem2}(1), where $S=\{x_1,x_2\}$, $T=\{t_1,\ldots, t_5\}$,  and $p=5$.}
	\label{f2}
\end{figure}

\begin{THM}\label{theorem3}
	Let $t>0$ be a real number, $R$ be any linear forest on $7$ vertices, and $G$ be a $t$-tough $R$-free graph on at least 3 vertices. Then $G$ has a 2-factor provided that 
	\begin{enumerate}[(1)]
		\item $R\in \{P_4\cup 3P_1, P_3\cup 4P_1, P_2\cup 5P_1\}$ and $t>1$ unless 
		\begin{enumerate}
			\item  when $R\ne P_4\cup 3P_1$, $G$ contains $H_5$ with $p=5$ as a spanning subgraph such that $E(G)\setminus E(H_5) \subseteq E_G(S, V(G)\setminus (T\cup S)) \cup E(G[S])$, where $H_5$, $S$ and $T$ are defined in Figure~\ref{f2}.  
			\item $R=P_2 \cup 5P_1$ and $G$ contains one of  $H_6, \ldots, H_{11}$ as a spanning subgraph such that $E(G)\setminus E(H_i) \subseteq E_G(S, V(G)\setminus (T\cup S)) \cup E(G[S]) \cup  E(G[ V(G)\setminus (T\cup S)])$,    where $H_i$, $S$ and $T$ are defined in Figure~\ref{f3} for each $i\in [6,11]$. 
		\end{enumerate}
		
		\item $R=7P_1$ and $t>\frac{7}{6}$ unless $G$ contains $H_5$ with $p=5$ as a spanning subgraph such that $E(G)\setminus E(H_5) \subseteq E_G(S, V(G)\setminus (T\cup S)) \cup E(G[S])$. 
		\item $R\in \{P_7, P_6\cup P_1, P_5\cup P_2, P_5\cup 2P_1, P_4\cup P_2\cup P_1, 2P_3\cup P_1,P_4\cup P_3, P_3\cup 2P_2, P_3\cup P_2\cup 2P_1, 3P_2\cup P_1, 2P_2\cup 3P_1\}$ and $t=3/2$. 
	\end{enumerate}
\end{THM}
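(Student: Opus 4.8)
The plan is to argue by contradiction. Fix $R$ and $t$ as in one of the three parts and suppose $G$ is a $t$-tough $R$-free graph on at least three vertices with no $2$-factor. I would first convert the absence of a $2$-factor into combinatorial data. By the $f$-factor theorem applied with $f\equiv 2$, there are disjoint sets $S,T\subseteq V(G)$ with
\[
2|S|+\sum_{v\in T}\bigl(d_{G-S}(v)-2\bigr)-h(S,T)\le -2,
\]
where $h(S,T)$ counts the components $C$ of $G-(S\cup T)$ with $e_G(C,T)$ odd (the bound may be taken even because the left-hand side always is). Choosing such a pair extremally --- say with $|T|$ minimum and then $|S|$ minimum --- the standard reductions give that $T$ is independent and $d_{G-S}(v)\le 1$ for every $v\in T$, and we may further assume every component of $G-(S\cup T)$ sends a positive odd number of edges to $T$. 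Writing $D=V(G)\setminus(S\cup T)$ and letting $C_1,\dots,C_q$ be these components, the displayed inequality becomes $q\ge 2|S|+e_G(T,D)-2|T|+2$; since $e_G(T,D)\ge q$ (each $C_i$ meets $T$) and $e_G(T,D)\le|T|$ (each $v\in T$ has at most one neighbour outside $S$), this already forces $|T|\ge|S|+1$ and $q\le|T|$.

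Next I would bring in toughness. Since $C_1,\dots,C_q$ are precisely the components of $G-(S\cup T)$, $t$-toughness gives $tq\le|S|+|T|$; combining this with $|T|\ge|S|+1$, $q\le|T|$ and the deficiency inequality confines $|S|$, $|T|$ and $q$ to a region that shrinks as $t$ grows --- bounded outright once $t>1$, and very narrow when $t=3/2$.

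The heart of the argument is to use $R$-freeness to make this region empty in the $t=3/2$ cases and reduce it to a short list of extremal configurations otherwise. The key observation is that the edges from $T$ to the components form an induced matching: choosing an edge $t_i w_i$ with $t_i\in T$ and $w_i\in C_i$ for each $i\in[1,q]$, the $t_i$ are pairwise non-adjacent, the $w_i$ lie in distinct components, and since $d_{G-S}(t_i)\le 1$ no $t_i$ is adjacent to $w_j$ for $j\ne i$; hence $G$ contains an induced $qP_2$. Likewise $\{w_1,\dots,w_q\}$ is independent, so $q\le\alpha(G)$ (already giving $q\le 6$ and $|T|\le 6$ for $R=7P_1$), and a vertex taken inside some $C_i$, or a path starting $t_i,w_i$ and continued inside $C_i$ or through a common $S$-neighbour, supplies the longer path-summands of $R$. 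Matching the summands of each of the fifteen forests $R$ against these structures pins $q$, $|T|$, $|S|$ and the orders of the $C_i$ into a tiny range. For the $t=3/2$ families every such $R$ contains an induced $2K_2$, so I would split: if $G$ is $2K_2$-free we are done by the theorem of Ota and Sanka~\cite{Ota2021Toughness2A}, and otherwise a fixed induced $2K_2$, together with the impossibility of extending it to $R$, constrains the adjacencies enough to contradict the numerical window. For the $t>1$ and $t>7/6$ families the same matching argument leaves only configurations with deficiency exactly $-2$, small $q$ and $|S|$, and essentially every adjacency incident to $T$ forced; these should be exactly the graphs built from $H_5,\dots,H_{11}$ of Figures~\ref{f2} and~\ref{f3}, with the extra edges allowed only where they can neither change $d_{G-S}(\cdot)$ on $T$ nor merge components of $G-(S\cup T)$.

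The hard part will be this last step, which is where most of the work lies. For each linear forest separately one has to control, in the surviving configurations, the adjacencies among $S$, among the $C_i$, and between $S$ and $D$ --- for instance whether two components share a common neighbour in $S$, whether some component contains a vertex of degree $\ge 2$ inside it, or whether a vertex of $T$ is adjacent to a vertex of $S$ --- finely enough to actually assemble the forbidden induced copy of $R$; the number of sub-cases grows with the number of $P_1$-summands of $R$, which is why $P_2\cup 5P_1$ needs the whole list $H_6,\dots,H_{11}$ while $P_4\cup 3P_1$ needs no exception at all. A further, more routine task, needed for the sharpness implicit in the statement, is to verify for each exceptional graph that it has no $2$-factor (a direct evaluation of the deficiency on the pair $(S,T)$ drawn in the figure), that it is $R$-free (bounding its independence and induced-matching numbers), and that it has the claimed toughness --- for instance that $H_5$ with $p=5$ is $6/5$-tough, so that it is a legitimate exception sitting above the threshold $7/6$.
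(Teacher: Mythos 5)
There is a genuine gap at the foundation of your argument. After choosing the pair $(S,T)$ extremally you assert that ``the standard reductions give that $T$ is independent and $d_{G-S}(v)\le 1$ for every $v\in T$,'' and everything downstream --- the bound $e_G(T,D)\le |T|$, the induced $qP_2$, and the claim that the numerical window closes --- rests on this. But no choice of barrier yields $d_{G-S}(v)\le 1$ for all $v\in T$. The correct property of an extremal (biased) barrier, which the paper takes $|S|$ maximum and then $|T|$ minimum to obtain, is only that each $v\in T$ sends \emph{at most one edge to each individual odd component}; a vertex of $T$ may well be adjacent to two or more components, so $d_{G-S}(v)$ can be arbitrary. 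This is not a removable technicality: the graph $H_0$ of Figure~\ref{f1} is a $1$-tough graph with no $2$-factor whose barrier has $S=\emptyset$, $|T|=3$ and $e_G(T,D)=6>|T|$, directly contradicting your inequality $e_G(T,D)\le|T|$; and in the $t=3/2$ cases the paper's Claim~3 \emph{proves} that some $y_0\in T$ must meet at least two odd components --- if your reduction were available, the weighted count in that claim would finish all of part (3) in three lines. The existence of such a $y_0$ is in fact the crux of the paper's argument for part (3): it is used (via Lemma~\ref{lem:P4}(2)) to build an induced $P_b\cup aP_1$ with $b\ge 7$ threading through two components, and the remaining forests are handled by a case analysis on $|\bigcup_{k\ge1}\mathcal{C}_{2k+1}|$ with carefully chosen cutsets of the form $S\cup W_{D}$, not the crude cutset $S\cup T$ you propose. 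Your fallback of splitting on $2K_2$-freeness and citing~\cite{Ota2021Toughness2A} does not repair this, since the branch where an induced $2K_2$ exists is exactly where all the work lies and is left entirely unargued.

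A secondary but still substantive shortfall: even granting correct barrier properties, the identification of the exceptional graphs $H_5,\dots,H_{11}$ and the elimination of all other configurations for $R\in\{P_4\cup 3P_1,\,P_3\cup 4P_1,\,P_2\cup 5P_1,\,7P_1\}$ is the bulk of the proof (the paper's Claim~\ref{clm5} runs through $|T|\in\{5,6\}$, the sizes $|\mathcal{C}_3|,|\mathcal{C}_5|$, and the internal structure of components on $5$ or $6$ vertices), and your proposal only gestures at it. As written, the argument does not establish the theorem.
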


\begin{figure}[!htb]
\begin{tikzpicture}[scale=1]
	\begin{scope}[scale=0.4] 
	\node (D) at (4,6) {$D$};
	\node (T) at (2,-0.75) {$T$};
	\node (S) at (-4.5,6) {$S$};
	\node (p) at (-4.25, 3) {$+$};
	
	\coordinate (v1) at (-2,3);
	\coordinate (v2) at (0,3);
	\coordinate (v3) at (2,3);
	\coordinate (v0) at (2,5);
	\coordinate (v4) at (4,3);
	\coordinate (v5) at (6,3);
	
	\node (v1l) at (-2,3.75) {$v_1$};
	\node (v2l) at (-0.75,3) {$v_2$};
	\node (v3l) at (1.5,3.5) {$v_3$};
	\node (v4l) at (3,3.5) {$v_4$};
	\node (v5l) at (6.25,3.75) {$v_5$};
	
	\coordinate (t1) at (-2,0.75);
	\coordinate (t2) at (0,0.75);
	\coordinate (t3) at (2,0.75);
	\coordinate (t4) at (4,0.75);
	\coordinate (t5) at (6,0.75);
	
	\node (t1l) at (-2.5,0.5) {$t_1$};
	\node (t2l) at (-.5,0.5) {$t_2$};
	\node (t3l) at (1.5,0.5) {$t_3$};
	\node (t4l) at (3.5,0.5) {$t_4$};
	\node (t5l) at (5.5,0.5) {$t_5$};
	
	\coordinate (s2) at (-4,4.25);
	\coordinate (s1) at (-5,4.25);
	
	\node[circle,fill=black,inner sep=1pt,label=$v_0$] at (v0) {};		
	\node[circle,fill=black,inner sep=1pt,label=] at (v1) {};
	\node[circle,fill=black,inner sep=1pt, label=] at (v2) {};
	\node[circle,fill=black,inner sep=1pt, label=] at (v3) {};
	\node[circle,fill=black,inner sep=1pt, label=] at (v4) {};
	\node[circle,fill=black,inner sep=1pt, label=] at (v5) {};
		
	\node[circle,fill=black,inner sep=1pt, label=] at (t1) {};
	\node[circle,fill=black,inner sep=1pt, label=] at (t2) {};
	\node[circle,fill=black,inner sep=1pt, label=] at (t3) {};
	\node[circle,fill=black,inner sep=1pt, label=] at (t4) {};
	\node[circle,fill=black,inner sep=1pt, label=] at (t5) {};
		
	\node[circle,fill=black,inner sep=1pt, label= $x_1$] at (s1) {};
	\node[circle,fill=black,inner sep=1pt, label=$x_2$] at (s2) {};
		
	\foreach \i in {v1,v2,v3,v4,v5}
        \draw (v0) to (\i); 

    \draw (s2) to (v1);
    \draw (v2) to (v3) to (v4) to (v5) to[out=220,in=320] (v2);
    \draw (v5) to[out=210,in=330] (v3);
    \draw (v4) to[out=210,in=330] (v2);
    
	\draw (v1) to (t1);
	\draw (v2) to (t2);
	\draw (v3) to (t3);
	\draw (v4) to (t4);
	\draw (v5) to (t5);
		\node () at (2,-2.5) {The graph $H_6$};
			\end{scope}
		
		\begin{scope}[shift={(5,0)},scale=0.4] 
	\node (D) at (5,7) {$D$};
	\node (T) at (3,-0.5) {$T$};
	\node (S) at (-3.5,6) {$S$};
	\node (p) at (-3.25, 3) {$+$};
	
	\coordinate (v1) at (-1,3);
	\coordinate (v2) at (1,3);
	\coordinate (v3) at (3,3);
	\coordinate (v0) at (3,6);
	\coordinate (v4) at (5,3);
	\coordinate (v5) at (7,3);
	
	\node (v0l) at (3,6.5) {$v_0$};
	\node (v1l) at (-1.75,3) {$v_1$};
	\node (v2l) at (1.5,2.5) {$v_2$};
	\node (v3l) at (3.5,3.5) {$v_3$};
	\node (v4l) at (5.5,3.5) {$v_4$};
	\node (v5l) at (7.5,3.5) {$v_5$};
	
	\coordinate (t1) at (-1,0.75);
	\coordinate (t2) at (1,0.75);
	\coordinate (t3) at (3,0.75);
	\coordinate (t4) at (5,0.75);
	\coordinate (t5) at (7,0.75);
	
	\node (t1l) at (-1.5,0.5) {$t_1$};
	\node (t2l) at (0.5,0.5) {$t_2$};
	\node (t3l) at (2.5,0.5) {$t_3$};
	\node (t4l) at (4.5,0.5) {$t_4$};
	\node (t5l) at (6.5,0.5) {$t_5$};
	
	\coordinate (s2) at (-3,4.25);
	\coordinate (s1) at (-4,4.25);
	
	\node[circle,fill=black,inner sep=1pt,label=] at (v0) {};		
	\node[circle,fill=black,inner sep=1pt,label=] at (v1) {};
	\node[circle,fill=black,inner sep=1pt, label=] at (v2) {};
	\node[circle,fill=black,inner sep=1pt, label=] at (v3) {};
	\node[circle,fill=black,inner sep=1pt, label=] at (v4) {};
	\node[circle,fill=black,inner sep=1pt, label=] at (v5) {};
		
	\node[circle,fill=black,inner sep=1pt, label=] at (t1) {};
	\node[circle,fill=black,inner sep=1pt, label=] at (t2) {};
	\node[circle,fill=black,inner sep=1pt, label=] at (t3) {};
	\node[circle,fill=black,inner sep=1pt, label=] at (t4) {};
	\node[circle,fill=black,inner sep=1pt, label=] at (t5) {};
		
	\node[circle,fill=black,inner sep=1pt, label=$x_1$] at (s1) {};
	\node[circle,fill=black,inner sep=1pt, label=$x_2$] at (s2) {};
		
	\foreach \i in {v1,v2,v3,v4,v5}
        \draw (v0) to (\i); 

    \draw (v1) to (v2);
    \draw (v3) to (v4) to (v5) to[out=210,in=330] (v3);
    
	\draw (v1) to (t1);
	\draw (v2) to (t2);
	\draw (v3) to (t3);
	\draw (v4) to (t4);
	\draw (v5) to (t5);
	\node () at (3,-2.5) {The graph $H_7$};
\end{scope}
		
		\begin{scope}[shift={(10,0)},scale=0.4] 
	\node (D) at (4,7) {$D$};
	\node (T) at (4,-0.75) {$T$};
	\node (S) at (-2.5,6) {$S$};
	\node (p) at (-2., 3) {$+$};
	
	\coordinate (v1) at (0,3);
	\coordinate (v2) at (2,4);
	\coordinate (v0) at (2,6);
	\coordinate (v3) at (4,3);
	\coordinate (v4) at (6,3);
	\coordinate (v5) at (8,3);
	
	\node (v0l) at (-0.25,3.75) {$v_2$};
	\node (v1l) at (1.5,4.3) {$v_1$};
	\node (v2l) at (2,6.5) {$v_0$};
	\node (v3l) at (4.3,3.5) {$v_3$};
	\node (v4l) at (6,3.55) {$v_4$};
	\node (v5l) at (8.5,3.25) {$v_5$};
	
	\coordinate (t1) at (0,0.75);
	\coordinate (t2) at (2,0.75);
	\coordinate (t3) at (4,0.75);
	\coordinate (t4) at (6,0.75);
	\coordinate (t5) at (8,0.75);
	
	\node (t1l) at (-0.5,0.5) {$t_1$};
	\node (t2l) at (1.5,0.5) {$t_2$};
	\node (t3l) at (3.5,0.5) {$t_3$};
	\node (t4l) at (5.5,0.5) {$t_4$};
	\node (t5l) at (7.5,0.5) {$t_5$};
	
	\coordinate (s2) at (-2,4.25);
	\coordinate (s1) at (-3,4.25);
	
	\node[circle,fill=black,inner sep=1pt,label=] at (v0) {};		
	\node[circle,fill=black,inner sep=1pt,label=] at (v1) {};
	\node[circle,fill=black,inner sep=1pt, label=] at (v2) {};
	\node[circle,fill=black,inner sep=1pt, label=] at (v3) {};
	\node[circle,fill=black,inner sep=1pt, label=] at (v4) {};
	\node[circle,fill=black,inner sep=1pt, label=] at (v5) {};
		
	\node[circle,fill=black,inner sep=1pt, label=] at (t1) {};
	\node[circle,fill=black,inner sep=1pt, label=] at (t2) {};
	\node[circle,fill=black,inner sep=1pt, label=] at (t3) {};
	\node[circle,fill=black,inner sep=1pt, label=] at (t4) {};
	\node[circle,fill=black,inner sep=1pt, label=] at (t5) {};
		
	\node[circle,fill=black,inner sep=1pt, label=$x_1$] at (s1) {};
	\node[circle,fill=black,inner sep=1pt, label=$x_2$] at (s2) {};
		
	\foreach \i in {v1,v2,v3,v4,v5}
        \draw (v0) to (\i); 
        
    \foreach \i in {v1,v3}
        \draw (v2) to (\i);
        
    \draw (s1) to (v1);
    \draw (v3) to (v4) to (v5) to[out=210,in=330] (v3);
	\draw (v1) to (t1);
	\draw (v2) to (t2);
	\draw (v3) to (t3);
	\draw (v4) to (t4);
	\draw (v5) to (t5);
	\node () at (4,-2.5) {The graph $H_8$};
\end{scope}
	
	\begin{scope}[shift={(-2,-5)}, scale=0.4] 
	\node (D) at (9,7) {$D$};
	\node (T) at (7,-0.75) {$T$};
	\node (S) at (0.5,6) {$S$};
	\node (p) at (0.75, 3) {$+$};
	
	\coordinate (v0) at (7,6.5);
	\coordinate (v1) at (3,3);
	\coordinate (v2) at (5,3);
	\coordinate (v3) at (7,4);
	\coordinate (v4) at (9,3);
	\coordinate (v5) at (11,3);
	
	\node (v0l) at (7,7) {$v_0$};
	\node (v1l) at (2.25,3) {$v_2$};
	\node (v2l) at (5.5,2.5) {$v_3$};
	\node (v3l) at (7.5,4.5) {$v_1$};
	\node (v4l) at (8.5,2.5) {$v_4$};
	\node (v5l) at (11.75,3) {$v_5$};
	
	\coordinate (t1) at (3,0.75);
	\coordinate (t2) at (5,0.75);
	\coordinate (t3) at (7,0.75);
	\coordinate (t4) at (9,0.75);
	\coordinate (t5) at (11,0.75);
	
	\node (t1l) at (2.5,0.5) {$t_1$};
	\node (t2l) at (4.5,0.5) {$t_2$};
	\node (t3l) at (6.5,0.5) {$t_3$};
	\node (t4l) at (8.5,0.5) {$t_4$};
	\node (t5l) at (10.5,0.5) {$t_5$};
	
	\coordinate (s2) at (1,4.25);
	\coordinate (s1) at (0,4.25);
	
	\node[circle,fill=black,inner sep=1pt,label=] at (v0) {};		
	\node[circle,fill=black,inner sep=1pt,label=] at (v1) {};
	\node[circle,fill=black,inner sep=1pt, label=] at (v2) {};
	\node[circle,fill=black,inner sep=1pt, label=] at (v3) {};
	\node[circle,fill=black,inner sep=1pt, label=] at (v4) {};
	\node[circle,fill=black,inner sep=1pt, label=] at (v5) {};
		
	\node[circle,fill=black,inner sep=1pt, label=] at (t1) {};
	\node[circle,fill=black,inner sep=1pt, label=] at (t2) {};
	\node[circle,fill=black,inner sep=1pt, label=] at (t3) {};
	\node[circle,fill=black,inner sep=1pt, label=] at (t4) {};
	\node[circle,fill=black,inner sep=1pt, label=] at (t5) {};
		
	\node[circle,fill=black,inner sep=1pt, label=$x_1$] at (s1) {};
	\node[circle,fill=black,inner sep=1pt, label=$x_2$] at (s2) {};
		
	\foreach \i in {v1,v2,v3,v4,v5}
        \draw (v0) to (\i); 
        
    \foreach \i in {v1,v2,v4,v5}
        \draw (v3) to (\i);
        
    \draw (v1) to (v2) to (v3) to (v4) to (v5);
	\draw (v1) to (t1);
	\draw (v2) to (t2);
	\draw (v3) to (t3);
	\draw (v4) to (t4);
	\draw (v5) to (t5);
	\node () at (7,-2.5) {The graph $H_9$};
	\end{scope}

 \begin{scope}[shift={(5.5,-5)}, scale=0.4] 
 	\node (D) at (2,7) {$D$};
 	\node (T) at (2,-0.8) {$T$};
 	\node (S) at (-4.5,6) {$S$};
 	\node (p) at (-4.5, 3) {$+$};
	
 	\coordinate (v1) at (-2,3);
 	\coordinate (v2) at (0,3);
 	\coordinate (v3) at (2,3);
 	\coordinate (v0) at (2,5);
 	\coordinate (v4) at (4,3);
 	\coordinate (v5) at (6,3);
	
 	\coordinate (t1) at (-2,0.75);
 	\coordinate (t2) at (0,0.75);
 	\coordinate (t3) at (2,0.75);
 	\coordinate (t4) at (4,0.75);
 	\coordinate (t5) at (6,0.75);
 	
 	\node (t1l) at (-2.5,0.25) {$t_1$};
 	\node (t2l) at (-0.5,0.25) {$t_2$};
 	\node (t3l) at (1.75,0.25) {$t_3$};
 	\node (t4l) at (3.5,0.25) {$t_4$};
 	\node (t5l) at (5.5,0.25) {$t_5$};
 	
 	\node (v1l) at (-2, 3.5) {$v_1$};
 	\node (v2l) at (-0.1,3.5) {$v_2$};
 	\node (v3l) at (1.5,3.5) {$v_3$};
 	\node (v4l) at (3,3.5) {$v_4$};
 	\node (v5l) at (5.7,3.5) {$v_5$};
	
 	\coordinate (s2) at (-4,4.25);
 	\coordinate (s1) at (-5,4.25);
	
 	\node[circle,fill=black,inner sep=1pt,label=$v_0$] at (v0) {};		
 	\node[circle,fill=black,inner sep=1pt,label=] at (v1) {};
 	\node[circle,fill=black,inner sep=1pt, label=] at (v2) {};
 	\node[circle,fill=black,inner sep=1pt, label=] at (v3) {};
 	\node[circle,fill=black,inner sep=1pt, label=] at (v4) {};
 	\node[circle,fill=black,inner sep=1pt, label=] at (v5) {};
		
 	\node[circle,fill=black,inner sep=1pt, label=] at (t1) {};
 	\node[circle,fill=black,inner sep=1pt, label=] at (t2) {};
 	\node[circle,fill=black,inner sep=1pt, label=] at (t3) {};
 	\node[circle,fill=black,inner sep=1pt, label=] at (t4) {};
 	\node[circle,fill=black,inner sep=1pt, label=] at (t5) {};
		
 	\node[circle,fill=black,inner sep=1pt, label=$x_1$] at (s1) {};
 	\node[circle,fill=black,inner sep=1pt, label=$x_2$] at (s2) {};
		
 	\foreach \i in {v1,v2,v3,v4,v5}
         \draw (v0) to (\i); 

     \draw (v1) to (v2) to (v3) to (v4) to (v5);
 	\draw (v1) to (t1);
 	\draw (v2) to (t2);
 	\draw (v3) to (t3);
 	\draw (v4) to (t4);
 	\draw (v5) to (t5);
 	\node () at (2.5,-2.5) {The graph $H_{10}$};
 		\draw  (v1) to[out=-20,in=200] (v5);
 \end{scope}

\begin{scope}[shift={(10.5,-5)}, scale=0.4] 
	\node (D1) at (1.5,7) {$D_1$};
	\node (D2) at (6,7) {$D_2$};
	\node (T) at (3.75,-1.) {$T$};
	\node (S) at (-4,6) {$S$};
	\node (p) at (-3.75, 2) {$+$};
	
	\coordinate (v1) at (-0.5,6);
	\coordinate (v2) at (1.5,3);
	\coordinate (v3) at (3,4.75);
	\coordinate (v4) at (4.5,4.75);
	\coordinate (v5) at (6,3.53);
	\coordinate (v6) at (8,6);
	
	\node (v1l) at (-1.25,5.5) {$v_1$};
	\node (v2l) at (0.5,3.5) {$v_2$};
	\node (v3l) at (2.75,5.5) {$v_3$};
	\node (v4l) at (4.5,5.5) {$v_4$};
	\node (v5l) at (6.75,3.5) {$v_5$};
	\node (v6l) at (8.75,5.5) {$v_6$};
	
	\coordinate (t1) at (-0.5,0.5);
	\coordinate (t2) at (1.5,0.5);
	\coordinate (t3) at (3.75,0.5);
	\coordinate (t4) at (6,0.5);
	\coordinate (t5) at (8,0.5);
	
	\node (t1l) at (-1,0.25) {$t_1$};
	\node (t2l) at (1,0.25) {$t_2$};
	\node (t3l) at (3.25,0.25) {$t_3$};
	\node (t4l) at (5.5,0.25) {$t_4$};
	\node (t5l) at (7.5,0.25) {$t_5$};
	
	\coordinate (s2) at (-3.5,3.25);
	\coordinate (s1) at (-4.5,3.25);

	\node[circle,fill=black,inner sep=1pt,label=] at (v1) {};
	\node[circle,fill=black,inner sep=1pt, label=] at (v2) {};
	\node[circle,fill=black,inner sep=1pt, label=] at (v3) {};
		
	\node[circle,fill=black,inner sep=1pt, label=] at (v4) {};
	\node[circle,fill=black,inner sep=1pt, label=] at (v5) {};
	\node[circle,fill=black,inner sep=1pt, label=] at (v6) {};
		
	\node[circle,fill=black,inner sep=1pt, label=] at (t1) {};
	\node[circle,fill=black,inner sep=1pt, label=] at (t2) {};
	\node[circle,fill=black,inner sep=1pt, label=] at (t3) {};
	\node[circle,fill=black,inner sep=1pt, label=] at (t4) {};
	\node[circle,fill=black,inner sep=1pt, label=] at (t5) {};
		
	\node[circle,fill=black,inner sep=1pt, label=$x_1$] at (s1) {};
	\node[circle,fill=black,inner sep=1pt, label=$x_2$] at (s2) {};

	\draw (v3) to (t3) to (v4);
	\draw (v1) to (t1);
	\draw (v2) to (t2);
	\draw (v5) to (t4);
	\draw (v6) to (t5);
	\draw (v1) to (v2) to (v3) to (v1);
	\draw (v4) to (v5) to (v6) to (v4);
	\node () at (4,-2.5) {The graph $H_{11}$};
\end{scope}
\end{tikzpicture}
\caption{The five exceptional graphs for Theorem~\ref{theorem3}(1)(b), where $S=\{x_1,x_2\}$, $T=\{t_1,t_2,t_3,t_4,t_5\}$, and ``+'' represents the join of $H_i[S]$ and $H_i[T]$, $i\in [6,11]$.}
\label{f3}
\end{figure}
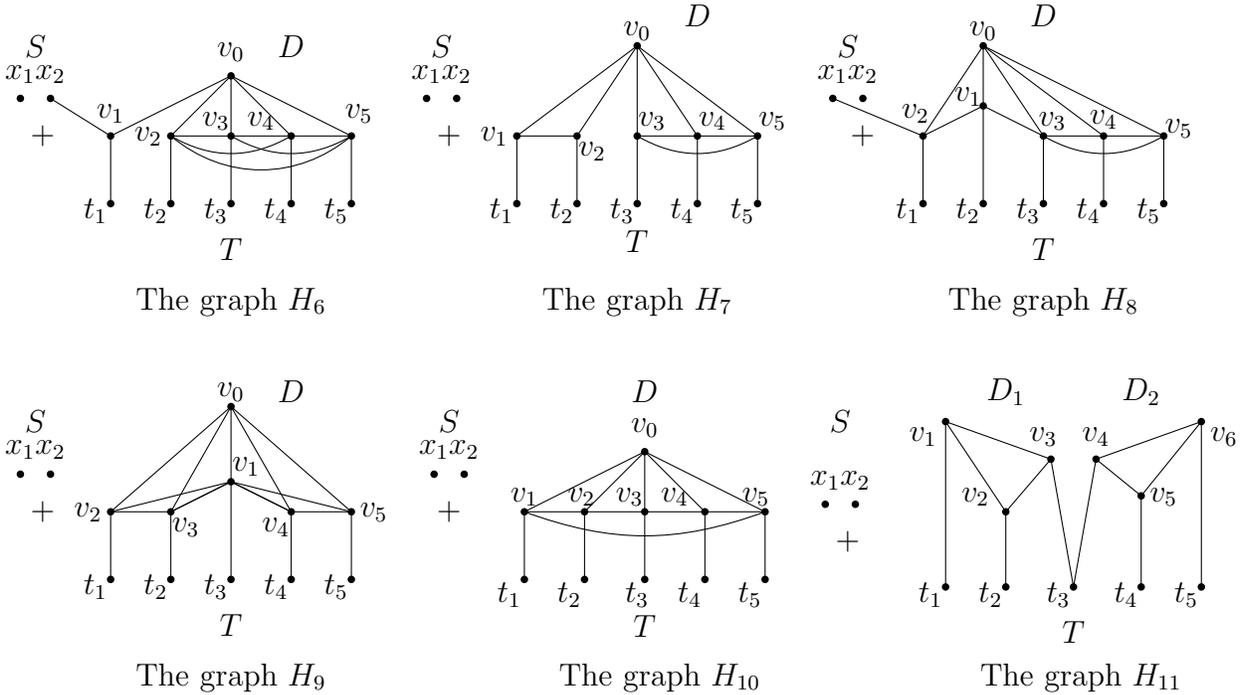


%
%
%
%

\begin{REM}[Examples demonstrating sharp toughness bounds]\label{remark} The toughness bounds in Theorems~\ref{theorem1} to~\ref{theorem3} are all sharp. 
\begin{enumerate}[(1)]
	\item  Theorem~\ref{theorem1}(1) when $R \in \{P_4 \cup P_1, P_3\cup 2P_1, P_2\cup 3P_1\}$  and $t=1$. The graph showing that the toughness 1 is best possible is the complete bipartite $K_{n-1,n}$ for any integer $n\ge 2$.  The  graph $K_{n,n-1}$ is $P_4$-free and so is $R$-free, with $\lim_{n \to \infty}\tau(K_{n,n-1})=\lim_{n \to \infty}\frac{n-1}{n}=1$, but contains no 2-factor.  
	
	\item  Theorem~\ref{theorem1}(2), Theorem~\ref{theorem2}(1)
	and  Theorem~\ref{theorem3}(1) and $t>1$. The graph showing  that the toughness  is best possible is the graph   $H_{12}$, which is constructed as below: let $p\ge 3$,  $K_p$ be a complete graph, and $y_1,y_2,y_3\in V(K_p)$ be distinct, $S=\{x\}$, and $T=\{t_1,t_2,t_3\}$, then $H_{12}$ is obtained from $K_p$, $S$
	and $T$ by adding edges $t_ix$ and  $t_iy_i$ for each $i\in [1,3]$. See Figure~\ref{f4}
	for a depiction. 
	By inspection, the graph is $5P_1$-free and $(P_4\cup 2P_1)$-free. So the graph is $R$-free for any $R\in \{5P_1, P_4\cup 2P_1, P_3\cup 3P_1, P_2\cup 4P_1, 6P_1, P_4\cup 3P_1, P_3\cup 4P_1, P_2\cup 5P_1\}$.  For any given $p \ge 3$, 
	the graph $H_{12}$ does not contain a 2-factor, as any 2-factor has to contain the edges $t_1x, t_2x$ and $t_3x$. We will show $\tau(H_{12})=1$
	in the last section. 
   \item For Theorem~\ref{theorem1}(3), Theorem~\ref{theorem2}(2)
   and  Theorem~\ref{theorem3}(3) and $t=\frac{3}{2}$:  note that 
   all the graphs $R$ in these cases contain $2K_2$ as an induced subgraph. Chv\'atal~\cite{chvatal} constructed a sequence $\{G_k\}_{k=1}^\infty$
   of split graphs (graphs whose vertex set can be partitioned into a clique and an independent set)  having no 2-factors and $\tau(G_k)=\frac{3k}{2k+1}$
    for each positive integer $k$. As the class of $2K_2$-free graphs 
    is a superclass of split graphs, $\frac{3}{2}$-tough is the best possible toughness bound for a $2K_2$-free graph to have a 2-factor.
    
   \item Theorem~\ref{theorem3}(2) and $t>\frac{7}{6}$.  The graph showing  that the toughness is best possible is the graph $H_5$ with $p\ge 6$, which is constructed as below: let $p\ge 5$,  $K_p$ be a complete graph, and $y_1,y_2,y_3, y_4,y_5\in V(K_p)$ be distinct, $S=\{x_1, x_2\}$, and $T=\{t_1,t_2,t_3, t_4,t_5\}$. Then $H_5$ is obtained from $K_p$, $S$
   and $T$ by adding edges $t_ix_j$ and  $t_iy_i$ for each $i\in [1,5]$ and each $j\in [1,2]$. 
    See Figure~\ref{f2} for a depiction. 
   By inspection, the graph is $7P_1$-free. For any given $p \ge 5$, 
   the graph $H_5$ does not contain a 2-factor, as any 2-factor has to contain at least three edges from one of $x_1$
   and $x_2$ to at least three vertices of $T$.   We will show $\tau(H_{5})=\frac{7}{6}$ when $p\ge 6$
   in the last section. 
   
\end{enumerate}
\end{REM}


\begin{figure}[!htb]
	\begin{center}
		
		\begin{tikzpicture}[scale=1]

\begin{scope}[scale=0.5, shift={(0,0)}]
\coordinate (S) at (-4,4);
\coordinate (D2) at (4,4);
\coordinate (T) at (0,0);
\coordinate (Tb) at (0,-0.5);
\node (d2) at (4,4.8) {$K_p$};
\node (t) at (0,-1) {$ T$};
\node (SL) at (-4.6, 4) {$x$};
\node (t) at (0,-2) {The graph $H_{12}$};

\coordinate (v4) at (3,3.25);
\coordinate (v5) at (4,3);
\coordinate (v6) at (5,3.25);
\coordinate (v7) at (-2,0.25);
\coordinate (v8) at (0,0.25);
\coordinate (v9) at (2,0.25);

\node[circle,fill=black,inner sep=1pt,label=$S$] at (S) {};

\node[circle,fill=black,inner sep=1pt, label=$t_1$] at (v7) {};
\node[circle,fill=black,inner sep=1pt, label=$t_2$] at (v8) {};
\node[circle,fill=black,inner sep=1pt, label=$t_3$] at (v9) {};
\node[circle,fill=black,inner sep=1pt, label=$y_1$] at (v4) {};
\node[circle,fill=black,inner sep=1pt, label=$y_2$] at (v5) {};
\node[circle,fill=black,inner sep=1pt, label=$y_3$] at (v6) {};

\draw[thick] (D2) ellipse (3 and 1.5);

\draw (S) to (v7) to (v4);
\draw (S) to (v8) to (v5);
\draw (S) to (v9) to (v6);
\end{scope}

	\end{tikzpicture}

\end{center}
\caption{Sharpness example for Theorem~\ref{theorem1}(2), Theorem~\ref{theorem2}(1)
	and  Theorem~\ref{theorem3}(1), where $S=\{x\}$ and $T=\{t_1,t_2, t_3\}$.}
\label{f4}
\end{figure}
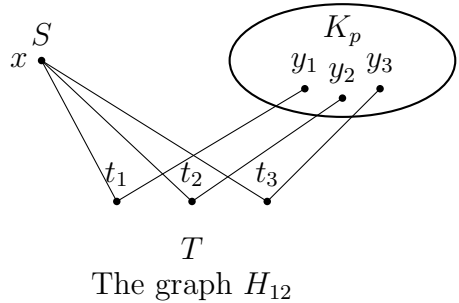		
To supplement Theorems~\ref{theorem1} to~\ref{theorem3},  
we show that the exceptional graphs in Figures~\ref{f1} to~\ref{f3}  satisfy the 
corresponding conditions below. 

\begin{THM}\label{theorem4a}
	The following statements hold. 
	\begin{enumerate}[(1)]
		\item The graph $H_i$ is $(P_2\cup 3P_1)$-free, contains no 2-factor, and $\tau(H_i)=1$ for each $i\in [0,4]$, the graph $H_1$ is also $(P_3\cup 2P_1)$-free. 
		  
		\item The graph $H_i$ is $(P_2\cup 5P_1)$-free and contains no 2-factor for each $i\in [5,11]$, $H_5$ with $p=5$ is   $(P_3 \cup 4P_1)$-free and $6P_1$-free. 
		Furthermore,  $\tau(H_5)=\frac{6}{5}$  when $p=5$ and  $\tau(H_i)= \frac{7}{6}$ for each $i\in [6,11]$.  
	\end{enumerate}
\end{THM}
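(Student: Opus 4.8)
The plan is to verify the three assertions --- forbidden--subgraph freeness, absence of a $2$-factor, and the exact value of the toughness --- separately for each of the twelve explicit graphs $H_0,\dots ,H_{11}$ of Figures~\ref{f1}--\ref{f3}, grouping the cases that are isomorphic or related by an evident automorphism and throughout exploiting that these graphs have order at most $13$ and large automorphism groups, so that in each place only a handful of genuinely different configurations need to be inspected.

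\emph{No $2$-factor.} In each $H_i$ let $S$ and $T$ be the distinguished sets of the figure ($T$ independent, $|S|\le 2$), and let $D$ denote the dense part. For $H_0$ every vertex of $T$ has degree $2$, so any $2$-factor $F$ must contain both edges at each $t\in T$; then every vertex of each triangle $D_1,D_2$ already has one $F$-edge into $T$ and would need $F$ to induce a $1$-regular graph on a triangle, which is impossible. For $H_1$--$H_4$ each $t\in T$ again has degree $2$, one of its neighbours lying in $S$, so $F$ forces $|T|$ edges at the single vertex of $S$, contradicting $\deg_F=2$. For $H_5$--$H_{10}$ each $t\in T$ has exactly one neighbour in $D$, hence sends at least one $F$-edge to $S$, so $e_F(S,T)\ge|T|=5>4\ge\sum_{x\in S}\deg_F(x)$, a contradiction. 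Finally, for $H_{11}$, where exactly one vertex $t_3$ of $T$ has two neighbours in $D$, the same count only gives $e_F(S,T)\ge 4=\sum_{x\in S}\deg_F(x)$; analysing the equality case shows $t_3$ must use both of its $D$-edges and every other $t_i$ its private $D$-edge, which once more leaves $F$ inducing a $1$-regular graph on the triangle $D_1$ --- impossible.

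\emph{$R$-freeness.} I would use the reformulation that a graph $G$ contains an induced $P_j\cup mP_1$ if and only if some induced $P_j$ in $G$ on a vertex set $P$ satisfies $\alpha\bigl(G-N_G[P]\bigr)\ge m$, where $N_G[P]=\bigcup_{v\in P}N_G[v]$; equivalently $G$ is $(P_j\cup mP_1)$-free precisely when $\alpha\bigl(G-N_G[P]\bigr)\le m-1$ for every induced $P_j$. For $j\in\{2,3\}$ the induced $P_j$'s in each $H_i$ fall into a few orbits under the automorphism group, and for each orbit the graph left after deleting a closed neighbourhood is small, so computing its independence number is a short finite check; this yields all the claimed freeness statements, including $H_1$ being $(P_3\cup 2P_1)$-free and $H_5$ with $p=5$ being $(P_3\cup 4P_1)$- and $6P_1$-free.

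\emph{Toughness.} For the upper bounds I would exhibit, in each $H_i$, an independent set $I$: removing $N_{H_i}(I)$ isolates every vertex of $I$, so $c\bigl(H_i-N_{H_i}(I)\bigr)\ge |I|+c\bigl(H_i-N_{H_i}[I]\bigr)$, and choosing $I$ appropriately makes the ratio $|N_{H_i}(I)| / \bigl(|I|+c(H_i-N_{H_i}[I])\bigr)$ equal to the claimed value --- e.g.\ $I=\{t_1\}$ for $H_0$ (and a similar singleton for $H_1$--$H_4$), giving ratio $1$; $I=\{t_2,t_3,t_4,t_5\}$ for $H_5$ with $p=5$, giving ratio $6/5$; and $I=\{v_0,t_1,\dots ,t_5\}$ for $H_6$, or $I=\{t_1,t_2,t_4,t_5,v_3,v_4\}$ for $H_{11}$, giving ratio $7/6$. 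For the matching lower bounds I would show that \emph{every} $S'\subseteq V(H_i)$ with $c(H_i-S')\ge 2$ satisfies $|S'|\ge \tau(H_i)\cdot c(H_i-S')$; writing $S'$ as the disjoint union of $S'\cap S$, $S'\cap T$ and $S'\cap D$ and splitting on the sizes of these pieces, one bounds $c(H_i-S')$ from above --- the dense part outside $S'$ contributes at most one component (at most two for $H_0$ and $H_{11}$, whose dense part is a disjoint pair of triangles), and separating off each further, necessarily $T$-supported, component costs a controlled number of vertices of $S'$ --- and then checks the resulting finitely many linear inequalities. This last step, proving the lower bound over \emph{all} separating sets rather than only exhibiting the extremal one, is the main obstacle, since it requires a complete and tidy enumeration of the possible shapes of a cut in each of the twelve graphs.
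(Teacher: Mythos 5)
Your proposal is correct and, for two of the three assertions, follows essentially the same route as the paper: the $R$-freeness checks via $\alpha\bigl(H_i-N[P]\bigr)$ over orbit representatives of induced $P_j$'s are exactly the paper's computations (note only that $6P_1$-freeness needs the degenerate case $j=1$, which your criterion covers but your phrase ``for $j\in\{2,3\}$'' omits), and the toughness argument --- extremal cutset for the upper bound, exhaustive analysis of cuts for the lower bound --- is the paper's as well. Where you genuinely diverge is the ``no $2$-factor'' part: the paper disposes of it in one line by observing that the pair $(S,T)$ displayed in each figure is a barrier in the sense of Lemma~\ref{tutte's theorem} (i.e.\ $\delta(S,T)\le -2$), whereas you give direct degree/parity arguments (degree-$2$ vertices of $T$ forcing too many $F$-edges at $S$, or a $1$-regular subgraph on a triangle). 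Your arguments are all correct, including the equality analysis for $H_{11}$, and have the advantage of not presupposing Tutte's theorem; the paper's appeal to the barrier is shorter and reuses machinery already set up. Two further remarks: for the toughness lower bounds you candidly leave the enumeration of cuts as a plan --- the paper executes it, and prunes the cases substantially by first taking $W$ to be a toughset and invoking Lemma~\ref{lem:tough-set} to force $S\subseteq W$ and $T\cap W=\emptyset$, a reduction you may want to adopt rather than splitting blindly on $|S'\cap S|,|S'\cap T|,|S'\cap D|$; and your cutset $N(I)$ for $H_{11}$ with $I=\{t_1,t_2,t_4,t_5,v_3,v_4\}$ does yield six components, which is in fact cleaner than the set displayed in the paper's own proof.
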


We have explained that $H_5$ and $H_{12}$ are $R$-free for the 
corresponding linear forests $R$ and contain no 2-factor in 
Remark~\ref{remark}(2) and (4). The Theorem below 
is to verify the toughness of the graphs $H_{5}$  with $p\ge 6$ and $H_{12}$.

\begin{THM}\label{theorem4}
	The following statements hold. 
	\begin{enumerate}[(1)]
		\item $\tau(H_5)=\frac{7}{6}$ when $p\ge 6$; 
		\item $\tau(H_{12})=1$.
	\end{enumerate}
	
\end{THM}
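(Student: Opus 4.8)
The plan is to prove each of the two equalities $\tau(G)=t_0$ in two halves: first exhibit a single vertex cut $S^*$ with $|S^*|/c(G-S^*)=t_0$, which gives $\tau(G)\le t_0$; then show $t_0\,c(G-S^*)\le |S^*|$ for \emph{every} cut $S^*$, which gives $\tau(G)\ge t_0$. In both graphs $V(G)$ splits into three blocks: the clique $K:=V(K_p)$, the independent set $T$, and the small independent set $S$ (a single vertex $x$ for $H_{12}$, two vertices $x_1,x_2$ for $H_5$); every edge lies inside $K$, joins $S$ to $T$, or is one of the edges $t_iy_i$ joining $T$ to $K$. Accordingly, for a cut $S^*$ I set $a=|S^*\cap S|$, $b=|S^*\cap T|$, and $c'=|S^*\cap K|$, and run the argument on the value of $a$. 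Both graphs are clearly connected, and the facts that they are $R$-free and $2$-factor-free are already recorded in Remark~\ref{remark}.

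For $\tau(H_{12})=1$: the cut $S^*=\{x,y_1\}$ isolates $t_1$ and leaves $(K\setminus\{y_1\})\cup\{t_2,t_3\}$ connected through the clique, so $c(H_{12}-S^*)=2=|S^*|$; hence $\tau(H_{12})\le 1$. For the reverse bound, if $a=0$ then the surviving vertex $x$ is adjacent to all of $T\setminus S^*$, so $\{x\}\cup(T\setminus S^*)$ (or $\{x\}$ alone when $T\subseteq S^*$) forms one component, and together with the at most one clique component this gives $c(H_{12}-S^*)\le 2$; moreover if $c(H_{12}-S^*)=2$ this blob is separated from $K\setminus S^*$, which forces $y_i\in S^*$ for each $i$ with $t_i\notin S^*$ and hence $|S^*|\ge b+(3-b)=3$. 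If $a=1$, then every component of $H_{12}-S^*$ other than the (at most one) clique component is a singleton $\{t_i\}$ with $y_i\in S^*$; there are at most $\min(3-b,c')\le c'$ of these, so $c(H_{12}-S^*)\le c'+1\le 1+b+c'=|S^*|$ when $c'<p$, and the remaining subcase $K\subseteq S^*$ gives $c(H_{12}-S^*)=3-b\le 4+b\le 1+b+p=|S^*|$ using $p\ge 3$.

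For $\tau(H_5)=\tfrac{7}{6}$ with $p\ge 6$: the cut $S^*=\{x_1,x_2,y_1,\dots,y_5\}$ of size $7$ isolates each of $t_1,\dots,t_5$ and leaves $K\setminus\{y_1,\dots,y_5\}\neq\emptyset$ as one further component, so $c(H_5-S^*)=6$ and $\tau(H_5)\le \tfrac76$. For the reverse bound I split on $a$. If $a\le 1$, some $x_j$ survives and is adjacent to all of $T\setminus S^*$, so the survivors among $S\cup T$ collapse into at most one blob; together with the at most one clique component and, when $b=5$, the at most two isolated vertices of $S$, this gives $c(H_5-S^*)\le 3$, while an argument as in the $H_{12}$ case shows $|S^*|\ge 5$, so $6|S^*|\ge 30\ge 21\ge 7\,c(H_5-S^*)$. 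The substantive case is $a=2$: here each surviving component is either a singleton $\{t_i\}$ with $y_i\in S^*$ — at most $\min(5-b,c')$ of these — or the unique clique component, present exactly when $c'<p$; writing $k$ for the number of the former, $c(H_5-S^*)=k+\mathbf 1[c'<p]$ while $|S^*|=2+b+c'$, and the inequality $6(2+b+c')\ge 7\bigl(k+\mathbf 1[c'<p]\bigr)$ is verified by splitting on whether $c'\le 5$ (using $k\le c'$, so $7(k+1)\le 7c'+7\le 12+6b+6c'$) or $c'\ge 5$ (using $6c'\ge 30$ together with $k\le 5-b$), with the subcase $c'=p$ reducing to $6p+13b\ge 23$, which holds since $p\ge 6$.

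The routine bookkeeping aside, the only delicate point is the case $a=2$ of $\tau(H_5)\ge \tfrac76$: the bound is tight precisely at $b=0$, $c'=5$, $p=6$, so in that regime the estimates $k\le 5-b$, $k\le c'$, and $c'\le p-1$ must all be used with no slack, and it is here that the hypothesis $p\ge 6$ (rather than $p=5$, which by Theorem~\ref{theorem4a}(2) yields $\tau=\tfrac65$) is essential. Everywhere else — all of the $H_{12}$ analysis and the $a\le 1$ cases for $H_5$ — there is a comfortable margin, so those cases should go through with only a line or two each.
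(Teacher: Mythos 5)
Your proof is correct, and it reaches the two equalities by a somewhat different route than the paper. Both arguments share the easy half (exhibiting a cut of ratio $7/6$ for $H_5$ and ratio $1$ for $H_{12}$; your witness $\{x,y_1\}$ for $H_{12}$ is a legitimate alternative to the paper's $S\cup\{y_1,y_2,y_3\}$, and in fact works even when $p=3$, where the paper's cut only yields three components). For the lower bounds, the paper first passes to a \emph{toughset} $W$ and invokes Lemma~\ref{lem:tough-set} to force $T\cap W=\emptyset$ once $S\subseteq W$, which collapses the analysis to counting $|W\cap V(D)|$; you instead verify $t_0\,c(G-S^*)\le|S^*|$ directly for \emph{every} vertex subset, parametrized by $(a,b,c')=(|S^*\cap S|,|S^*\cap T|,|S^*\cap K|)$. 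The underlying structural observation is the same in both proofs --- once $S\subseteq S^*$, each surviving $t_i$ is either an isolated vertex (when $y_i\in S^*$) or absorbed into the clique component --- but your version avoids the toughset machinery at the cost of a few extra arithmetic subcases. I checked the delicate case $a=2$ for $H_5$: the counts $k\le\min(5-b,c')$, $c(H_5-S^*)=k+\mathbf 1[c'<p]$, and the three inequalities you verify are all correct, and the tightness at $(b,c',p)=(0,5,6)$ is exactly where the paper's minimizer $|W\cap V(D)|=5$ sits. The $a\le 1$ cases and the whole $H_{12}$ analysis also check out.
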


The remainder of this paper is organized as follows. 
In section \ref{section:prelim}, we introduce more notation and preliminary results on proving existence of 2-factors in graphs. In section \ref{section:proof}, we prove Theorems~\ref{theorem1}-\ref{theorem3}. 
Theorems~\ref{theorem4a} and ~\ref{theorem4} are proved in  
the last section. 

\section{Preliminaries}\label{section:prelim}

One of the main proof ingredients of
Theorems~\ref{theorem1} to~\ref{theorem3} is to apply Tutte's
2-factor Theorem. We start with some notation. 
Let $S$ and $T$ be disjoint subsets of vertices of a graph $G$, and 
 $D$ be a component of $G-(S\cup T)$.
The component  $D$ is said to be an {\it odd component\/}
(resp.~{\it even component\/}) of $G-(S\cup T)$
if $e_G(D, T)\equiv 1\pmod{2}$
(resp.~$e_G(D, T)\equiv 0\pmod{2}$).
Let $h(S,T)$ be the number of all odd components of $G-(S\cup T)$. 
Define 
$$\delta(S, T)=2|S|-2|T|+\sum_{y\in T} d_{G-S}(y)-h(S,T).$$
It is easy to see that $\delta(S, T)\equiv 0\pmod{2}$
for every $S$,~$T\subseteq V(G)$
with $S\cap T=\emptyset$.
We use the following criterion for the existence of a $2$-factor,
which is a restricted form of Tutte's $f$-factor Theorem.

\begin{LEM}[Tutte~\cite{tutte}]\label{tutte's theorem}
	A graph $G$ has a $2$-factor if and only if
	$\delta(S, T)\ge 0$
	for every $S$,~$T\subseteq V(G)$
	with $S\cap T=\emptyset$.
\end{LEM}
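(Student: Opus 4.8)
The plan is to obtain this lemma as the specialization $f\equiv 2$ of Tutte's general $f$-factor theorem, of which the present statement is described (in the sentence preceding the lemma) as a restricted form. Recall that for an integer demand $f\colon V(G)\to\mathbb{Z}_{\ge 0}$, an \emph{$f$-factor} is a spanning subgraph $F$ with $d_F(v)=f(v)$ for every $v$, and Tutte's theorem asserts that $G$ has an $f$-factor if and only if
$$
\gamma(S,T):=\sum_{v\in S} f(v)+\sum_{v\in T}\bigl(d_{G-S}(v)-f(v)\bigr)-q_f(S,T)\ge 0
$$
for all disjoint $S,T\subseteq V(G)$, where $q_f(S,T)$ counts the components $C$ of $G-(S\cup T)$ for which $\sum_{v\in C}f(v)+e_G(C,T)$ is odd. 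When $f(v)=2$ for every $v$, an $f$-factor is exactly a $2$-regular spanning subgraph, that is, a $2$-factor, so the two existence questions coincide and it remains only to check that $\gamma(S,T)$ collapses to $\delta(S,T)$.

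The bookkeeping is routine but worth recording. With $f\equiv 2$ we have $\sum_{v\in S}f(v)=2|S|$ and $\sum_{v\in T}\bigl(d_{G-S}(v)-f(v)\bigr)=\sum_{v\in T}d_{G-S}(v)-2|T|$, so the first three terms of $\gamma(S,T)$ are precisely $2|S|-2|T|+\sum_{y\in T}d_{G-S}(y)$, matching $\delta(S,T)$. For the parity term, note that $\sum_{v\in C}f(v)=2|V(C)|$ is always even, so $\sum_{v\in C}f(v)+e_G(C,T)$ is odd if and only if $e_G(C,T)$ is odd; hence $q_f(S,T)$ counts exactly the odd components of $G-(S\cup T)$ in the sense defined before the lemma, i.e.\ $q_f(S,T)=h(S,T)$. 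Therefore $\gamma(S,T)=\delta(S,T)$ for all disjoint $S,T$, and the equivalence in the lemma is immediate from the $f$-factor theorem.

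If instead one wants a self-contained argument that does not invoke the $f$-factor theorem, I would reduce to Tutte's $1$-factor (perfect matching) theorem by the classical vertex-gadget construction: replace each $v$ of degree $d_v$ by a complete bipartite graph joining a set $A_v$ of $d_v$ edge-endpoint vertices to a set $B_v$ of $d_v-2$ absorbing vertices, subdivide each edge of $G$ and route it to the appropriate $A$-vertices, and verify that perfect matchings of the resulting auxiliary graph $H$ correspond to $2$-factors of $G$. Applying the $1$-factor theorem to $H$ then produces a deficiency condition indexed by vertex subsets of $H$. The necessity direction (that $\delta(S,T)\ge 0$ whenever a $2$-factor exists) I would dispatch first as a warm-up by a direct edge-count: counting the $F$-edges leaving $T$ and using that each odd component must send an odd, hence positive, number of edges into $S\cup T$.

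The main obstacle lies entirely in the self-contained route, in the translation step: one must show that the Tutte barriers of $H$ capable of violating the $1$-factor condition are exactly those arising from pairs $(S,T)$ in $G$, and that each component's parity contribution transfers correctly through the gadgets so that the deficiency formula for $H$ becomes $\delta(S,T)$. Under the preferred route via the $f$-factor theorem there is essentially no obstacle beyond confirming the identification $q_f(S,T)=h(S,T)$, which is precisely where the evenness of $\sum_{v\in C}f(v)=2|V(C)|$ is used; the entire content of the lemma is that this specialization is legitimate.
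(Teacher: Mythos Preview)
Your proposal is correct and follows exactly the approach indicated by the paper: the paper does not give any proof of this lemma but simply cites Tutte and states that the criterion is ``a restricted form of Tutte's $f$-factor Theorem,'' which is precisely the derivation you carry out by specializing $f\equiv 2$ and checking $q_f(S,T)=h(S,T)$. Your additional sketch of a self-contained reduction to the $1$-factor theorem goes beyond what the paper does, but is not needed here.
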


An ordered pair $(S,T)$, consisting of
disjoint subsets of vertices $S$ and $T$ in a graph $G$,
is called a {\it barrier\/} if $\delta(S, T)\le -2$.
By Lemma~\ref{tutte's theorem},
if $G$ does not have a $2$-factor,
then $G$ has a barrier. In~\cite{EJC262}, a {\it biased barrier} of $G$ 
is defined as a barrier $(S, T)$ of $G$ such that among all the barriers of $G$,
	\begin{enumerate}[(1)]
		\item $|S|$ is maximum; and
		\item subject to (1), $|T|$ is minimum.
	\end{enumerate}

The following properties of a biased barrier were derived in~\cite{EJC262}. 

\begin{LEM}\label{lem:biasedbarrier}
	Let $G$ be a graph without  a $2$-factor,
	and let $(S, T)$ be a biased barrier of $G$.
	Then each of the following holds.
	\begin{enumerate}[(1)]
		\item The set
		$T$ is  independent  in $G$.
		\item If $D$ is an even component
		with respect to $(S, T)$,
		then $e_G(T, D)=0$.
		\item If $D$ is an odd component
		with respect to $(S,T)$,
		then for any  $y\in T$, $e_G(y, D) \le 1$.
		\item If $D$ is an odd component
		with respect to $(S,T)$,  then for any $x\in V(D)$,
		$e_G(x,T)\le 1$.
		\end{enumerate}
\end{LEM}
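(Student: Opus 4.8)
```latex
\section*{Proof proposal for Lemma~\ref{lem:biasedbarrier}}

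The plan is to prove each of the four properties by a minimality/maximality exchange argument on the biased barrier $(S,T)$: in each case I assume the stated property fails and produce another barrier contradicting either the maximality of $|S|$ or the minimality of $|T|$. Throughout I will use the parity fact that $\delta(S,T)\equiv 0\pmod 2$, so that a barrier is exactly a pair with $\delta(S,T)\le -2$, and small perturbations of $\delta$ by $\pm 1$ actually change it by at least $2$ whenever they keep us in the ``barrier regime''.

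\textbf{(1) $T$ is independent.} Suppose $y,y'\in T$ are adjacent. I would move $y$ out of $T$ and into $S$, i.e.\ consider $(S',T')=(S\cup\{y\}, T\setminus\{y\})$. Then $|S'|=|S|+1$, $|T'|=|T|-1$, the term $\sum_{z\in T'}d_{G-S'}(z)$ loses $d_{G-S}(y)$ and may drop by up to one more for each neighbour of $y$ in $T$ (at least the edge $yy'$), and $h(S',T')$ can differ from $h(S,T)$ only in a controlled way since $y$ becomes part of the cutset. Carefully tracking the change, $\delta(S',T')\le \delta(S,T)$ (the released vertex $y$ contributes $+2$ to $2|S'|$ but $-2$ for leaving $T$, cancels, and then its degree inside $G-S$ is subtracted off while components merge by at most that amount), so $(S',T')$ is still a barrier but with larger $|S|$, contradicting condition~(1) in the definition of a biased barrier. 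The bookkeeping of how $h$ changes when $y$ joins $S$ is the one spot that needs care, and I would handle it by noting each component of $G-(S\cup T)$ adjacent to $y$ either stays a single component of $G-(S'\cup T')$ with its parity toward $T'$ changed by $e_G(y,D)\bmod 2$, or gets absorbed — and one argues the net effect on $2|S|-2|T|-h$ is nonpositive.

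\textbf{(2)--(4): even components and odd components.} For (2), if $D$ is an even component with $e_G(T,D)\ge 1$, pick $y\in T$ with $e_G(y,D)\ge 1$; moving that edge's influence around — concretely, I would again try $(S\cup\{y\},T\setminus\{y\})$ or absorb part of $D$ — and use that $D$ even means the edges to $T$ pair up, so removing $y$ does not create an odd component where a contradiction is avoidable; this should again yield a barrier with larger $|S|$. For (3), suppose an odd component $D$ has $e_G(y,D)\ge 2$ for some $y\in T$. Then deleting $y$ from $T$ (adding to $S$) merges nothing new but changes $d_{G-S}(y)$ and the parity of $D$ toward $T\setminus\{y\}$ by $e_G(y,D)\bmod 2=0$, so $D$ stays odd; the count $\delta$ changes by $2-2-e_{G}(y, T\cup\text{(rest)})\le \ldots$, and since $e_G(y,D)\ge 2$ we gain enough that $\delta$ does not increase, again contradicting maximality of $|S|$ (or, if $|S|$ is unchanged in some sub-case, minimality of $|T|$). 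For (4), suppose $x\in V(D)$ with $e_G(x,T)\ge 2$ for an odd component $D$; here the natural move is to put $x$ into $S$ as well, $(S\cup\{x\}, T)$: this can only split $D$ into several components, but since $x$ had $\ge 2$ neighbours in $T$, the sum $\sum_{y\in T} d_{G-S}(y)$ drops by $\ge 2$, which compensates the $+2$ from $|S|$ increasing, so $\delta$ does not increase and $(S\cup\{x\},T)$ is a barrier with larger $|S|$ — contradiction.

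\textbf{Main obstacle.} The genuinely delicate part in all four items is the behaviour of $h(S,T)$ — the count of odd components — under the exchange, because moving a vertex into $S$ can simultaneously merge some components and split others, and can flip parities. I would isolate this into one preliminary observation, proved once: \emph{if $(S,T)$ is any pair and $v\in T$ (resp.\ $v$ lies in some component $D$ of $G-(S\cup T)$), then $\delta(S\cup\{v\},T\setminus\{v\})\le \delta(S,T)$ (resp.\ $\delta(S\cup\{v\},T)\le \delta(S,T)$) whenever $v$ has at least one (resp.\ at least two) neighbours appropriately placed}, by charging each merged/split component against the degree terms. Once that accounting lemma is in hand, (1)--(4) follow by choosing $v$ to be the offending vertex in each case and invoking the definition of biased barrier. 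I should double-check the edge cases where the exchange leaves $|S|$ unchanged but could only decrease $|T|$, since then the contradiction must come from clause~(2) of the biased-barrier definition rather than clause~(1); the argument for (3) in particular may route through that clause.
```
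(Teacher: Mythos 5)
First, note that the paper does not prove Lemma~\ref{lem:biasedbarrier} at all; it is quoted from~\cite{EJC262}, so your proposal can only be judged on its own terms. On those terms there is a genuine gap in parts (1)--(3): the exchange you commit to, replacing $(S,T)$ by $(S\cup\{y\},T\setminus\{y\})$ and contradicting the maximality of $|S|$, does not give the inequality you assert. Writing $a$ for the number of odd components $D$ of $G-(S\cup T)$ with $e_G(y,D)$ odd and $b$ for the number of even ones, a direct computation gives
\[
\delta(S\cup\{y\},T\setminus\{y\})=\delta(S,T)+4-2e_G(y,T\setminus\{y\})-\sum_{D}e_G(y,D)+a-b,
\]
and since $a\le\sum_D e_G(y,D)$ and $b\ge 0$, the best general bound is $\delta(S\cup\{y\},T\setminus\{y\})\le\delta(S,T)+4-2e_G(y,T\setminus\{y\})$. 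In case (1) you only know $e_G(y,T\setminus\{y\})\ge 1$, so this is $\delta(S,T)+2$, which can equal $0$ (for instance when $y$ has exactly one neighbour in $T$ and one edge into a single odd component); then the new pair is not a barrier and no contradiction results. For (2) and (3) the situation is worse, since there $e_G(y,T\setminus\{y\})=0$ once (1) is known. So the ``accounting lemma'' you propose --- that this move never increases $\delta$ when $y$ has one suitably placed neighbour --- is false as stated, and the ``net effect is nonpositive'' claim in your treatment of (1) is exactly the unproved (and in general untrue) step.

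The correct move for (1)--(3) is the one you only gesture at in your final sentence: delete $y$ from $T$ \emph{without} adding it to $S$, and contradict the minimality of $|T|$. Letting $\mathcal{D}_y$ be the set of components of $G-(S\cup T)$ meeting $y$, deleting $y$ from $T$ merges these (together with $y$) into one component, so $h$ drops by at most the number of odd members of $\mathcal{D}_y$, while $d_{G-S}(y)=e_G(y,T\setminus\{y\})+\sum_{D\in\mathcal{D}_y}e_G(y,D)$; under each of the three hypotheses (a neighbour in $T$, a neighbour in an even component, or at least two edges into one odd component) this degree exceeds the number of odd members of $\mathcal{D}_y$ by at least $1$, whence $\delta(S,T\setminus\{y\})\le\delta(S,T)+1\le -1$, and parity upgrades this to $\le -2$, yielding a barrier with the same $S$ and smaller $T$. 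Your argument for (4) (adding $x$ to $S$) is essentially sound: $\delta(S\cup\{x\},T)-\delta(S,T)=3-e_G(x,T)-m$, where $m$ is the number of odd components into which $D-x$ splits, and with $e_G(x,T)\ge 2$ this is at most $1$, hence at most $0$ by parity; but this needs to be said explicitly, since your sketch ignores that $h$ may decrease when $D$ is destroyed.
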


Let $G$ be a graph without a 2-factor and $(S,T)$ be a barrier of $G$. 
For an integer $k\ge 0$, 
let $\mathcal{C}_{2k+1}$ denote the set of odd components $D$ of $G - (S\cup T)$ 
such that $e_G(D,T)=2k+1$. 
The following result was proved as a claim in~\cite{EJC262} but we include a short proof 
here for self-completeness.   
\begin{LEM}\label{LEM:T>S}
	Let $G$ be a graph without  a $2$-factor,
	and let $(S, T)$ be a biased barrier of $G$. Then 
	$|T| \ge  |S|+\sum_{k\ge 1}k|\CC_{2k+1}|+1$. 
\end{LEM}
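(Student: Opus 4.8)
The plan is to read off the claimed inequality essentially directly from the definition of $\delta(S,T)$, after rewriting the degree sum $\sum_{y\in T} d_{G-S}(y)$ in terms of the parameters $|\CC_{2k+1}|$ using the structure of a biased barrier. Since $(S,T)$ is a barrier, $\delta(S,T)\le -2$; the entire argument consists of putting $\delta(S,T)$ into a shape where the right-hand side of the claim already appears.

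First I would track where the edges incident with $T$ in $G-S$ go. By Lemma~\ref{lem:biasedbarrier}(1) the set $T$ is independent, so every such edge joins $T$ to some component of $G-(S\cup T)$; by Lemma~\ref{lem:biasedbarrier}(2) no such edge reaches an even component. Hence
$$\sum_{y\in T} d_{G-S}(y) \;=\; \sum_{D} e_G(D,T),$$
where the sum ranges over the odd components $D$ of $G-(S\cup T)$. Grouping the odd components according to the value of $e_G(D,T)$, the right side equals $\sum_{k\ge 0}(2k+1)\,|\CC_{2k+1}|$, while $h(S,T)=\sum_{k\ge 0}|\CC_{2k+1}|$ by definition.

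Substituting these two identities into $\delta(S,T)=2|S|-2|T|+\sum_{y\in T} d_{G-S}(y)-h(S,T)$ gives
$$\delta(S,T) \;=\; 2|S|-2|T| + \sum_{k\ge 0} 2k\,|\CC_{2k+1}| \;=\; 2|S|-2|T| + 2\sum_{k\ge 1} k\,|\CC_{2k+1}|,$$
the last equality because the $k=0$ term vanishes. Combining with $\delta(S,T)\le -2$ and dividing by $2$ yields $|S|-|T|+\sum_{k\ge 1} k\,|\CC_{2k+1}|\le -1$, which rearranges to $|T|\ge |S|+\sum_{k\ge 1}k\,|\CC_{2k+1}|+1$. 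I do not expect a genuine obstacle here: the only point requiring care is the bookkeeping in the first step, namely making sure that no edge incident with $T$ is left unaccounted for — it cannot remain inside $T$ (independence) and cannot land in an even component — and this is precisely what parts (1) and (2) of Lemma~\ref{lem:biasedbarrier} supply.
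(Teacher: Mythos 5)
Your proof is correct and follows essentially the same route as the paper's: both rewrite $\sum_{y\in T} d_{G-S}(y)$ as $\sum_{k\ge 0}(2k+1)|\CC_{2k+1}|$ via parts (1) and (2) of Lemma~\ref{lem:biasedbarrier}, identify $h(S,T)=\sum_{k\ge 0}|\CC_{2k+1}|$, and read the inequality off from $\delta(S,T)\le -2$. No issues.
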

\pf Let $U=V(G)\setminus S$. 
Since $(S, T)$ is a barrier,
\[
\begin{split}
\delta(S, T)&=2|S|-2|T|+\sum_{y\in T}d_{G-S}(y)-h(S,T)\\
&=2|S|-2|T|+\sum_{y\in T}d_{G-S}(y)-\sum_{k\ge 0} |\CC_{2k+1}| \le -2.
\end{split}
\]
By Lemma~\ref{lem:biasedbarrier}(1) and Lemma~\ref{lem:biasedbarrier}(2),
\[
\sum_{y\in T}d_{G-S}(y)=\sum_{y\in T}e_G(y, U)
=e_G(T, U)=\sum_{k\ge 0}(2k+1)|\CC_{2k+1}|.
\]
Therefore,
we have
\[
-2\ge  2|S|-2|T|+\sum_{k\ge 0}(2k+1)|\CC_{2k+1}|-\sum_{k\ge 0}|\CC_{2k+1}|,
\]
which yields
$|T| \ge  |S|+\sum_{k\ge 1}k|\CC_{2k+1}|+1$.
\qed 


We use the following lemmas in our proof.

\begin{LEM} \label{lem:NoC1s}
	Let $t\ge 1$, $G$ be a $t$-tough graph on at least three vertices containing no $2$-factor,
	and  $(S,T)$ be a  barrier of $G$. Then the following statements hold. 
	\begin{enumerate}[(1)]
		\item If	 $\mathcal{C}_1 \ne \emptyset$, then $|S|+1\ge 2t$.  
		Consequently, $S=\emptyset$ implies $\mathcal{C}_1 = \emptyset$,  
		and   $|S|=1$ implies $\mathcal{C}_1 = \emptyset$ when $t>1$. 
		\item  $\bigcup_{k\geq 1}\mathcal{C}_{2k+1} \neq \emptyset$. 
	\end{enumerate}
\end{LEM}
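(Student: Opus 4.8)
The plan is to prove each part by exploiting the barrier inequality $\delta(S,T)\le -2$ together with the toughness hypothesis, estimating $c(G-S)$ from below by counting the components of $G-(S\cup T)$ that survive.

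\textbf{Part (1).} Suppose $\mathcal{C}_1\ne\emptyset$, and let $D$ be an odd component with $e_G(D,T)=1$. First I would handle the degenerate possibility that $G-S$ is connected: since every odd component $D'$ has $e_G(D',T)\ge 1$ and $T$ is independent (Lemma~\ref{lem:biasedbarrier}(1)), each $D'$ is joined to $T$; if additionally $T=\emptyset$ then $\delta(\emptyset,T)=2|S|\ge 0$, contradicting that $(S,T)$ is a barrier, so $T\ne\emptyset$. Now the key point is that $S$ must be a cutset: the component $D\in\mathcal{C}_1$ together with the single vertex $y\in T$ with $e_G(y,D)=1$ and the rest of $G$ are separated in an appropriate sense — more precisely, I would argue that $S$ separates $D$ from the rest of $G-S$ unless $G-S$ has very few components, and in the extremal small cases check $\delta(S,T)\le-2$ directly. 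The clean way: the components of $G-(S\cup T)$ together with the isolated-in-$G-S$ behavior of $T$ show $c(G-S)\ge$ (number of components of $G-S$), and because $D\in\mathcal C_1$ is attached to $T$ by exactly one edge, $G-S$ has at least one component; combining with $|T|\ge |S|+1$ from Lemma~\ref{LEM:T>S} and toughness $t\cdot c(G-S)\le |S|$ forces a bound. Let me instead run it through $\delta$: write $\delta(S,T)=2|S|-2|T|+e_G(T,V(G)\setminus S)-h(S,T)=2|S|-2|T|+\sum_{k\ge 0}(2k+1)|\mathcal C_{2k+1}|-\sum_{k\ge 0}|\mathcal C_{2k+1}|\le -2$, so $|T|\ge |S|+1+\sum_{k\ge1}k|\mathcal C_{2k+1}|\ge |S|+1$. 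Then $c(G-S)\ge h(S,T)\ge |\mathcal C_1|\ge 1$ (each odd component is a component of $G-S$ or becomes absorbed into one, but distinct odd components lie in distinct components of $G-S$ since $T$ is independent), and if $c(G-S)\ge 1$ with $G$ non-complete, toughness gives $|S|\ge t\cdot c(G-S)$; the sharper count needed is $c(G-S)\ge |\mathcal C_1|$ giving $|S|\ge t|\mathcal C_1|$, but to get $|S|+1\ge 2t$ I would observe that $D\in\mathcal C_1$ contributes a component of $G-(S\cup\{y\})$ for the neighbor $y$, hence $G-(S\cup\{y\})$ has at least two components (namely $D$ and something containing the bulk), so $t\cdot 2\le |S|+|\{y\}|=|S|+1$, i.e.\ $|S|+1\ge 2t$. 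That is the mechanism. The two consequences are immediate substitutions: $S=\emptyset$ gives $1\ge 2t\ge 2$, impossible, so $\mathcal C_1=\emptyset$; $|S|=1$ with $t>1$ gives $2\ge 2t>2$, impossible.

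\textbf{Part (2).} Suppose for contradiction that $\bigcup_{k\ge1}\mathcal C_{2k+1}=\emptyset$. Then every odd component lies in $\mathcal C_1$, so by Lemma~\ref{LEM:T>S} we only get $|T|\ge|S|+1$. Now I would split on whether $S=\emptyset$: if $S=\emptyset$, part~(1) gives $\mathcal C_1=\emptyset$, so $h(S,T)=0$ and $\delta(\emptyset,T)=-2|T|+\sum_{y\in T}d_G(y)$; since $G$ has at least three vertices and (being a candidate for a $2$-factor obstruction) every vertex has degree $\ge 2$ — actually I should derive $\delta\ge 0$ directly: with $h=0$, $\delta(S,T)=2|S|-2|T|+e_G(T,V(G)\setminus S)$, and $e_G(T,V(G)\setminus S)=\sum_{k\ge0}(2k+1)|\mathcal C_1|=|\mathcal C_1|=h(S,T)=0$ in this sub-case, forcing $\delta(S,T)=2|S|-2|T|$; but then $T$ has no edges to the components, and since $T$ is independent, $T$ consists of isolated vertices of $G-S$, each a component in $\mathcal C_1$ with $e_G=0$, contradiction or forcing $T=\emptyset$ hence $\delta=2|S|\ge0$, not a barrier. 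So $S\ne\emptyset$. With all odd components in $\mathcal C_1$ and $h(S,T)=|\mathcal C_1|$, the barrier inequality reads $2|S|-2|T|+|\mathcal C_1|\le-2$ after the same simplification (since $e_G(T,V\setminus S)=|\mathcal C_1|$), giving $|T|\ge|S|+1+|\mathcal C_1|/2$. Meanwhile each component of $G-(S\cup T)$ survives into $G-S$ possibly merged via $T$, but since $T$ is independent and each $D\in\mathcal C_1$ meets $T$ in exactly one vertex by exactly one edge, I would show $c(G-S)\ge|\mathcal C_1|$ (the $\mathcal C_1$-components plus the $T$-vertices with their attachments still form $\ge|\mathcal C_1|$ components, or even more). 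Toughness then gives $|S|\ge t|\mathcal C_1|\ge|\mathcal C_1|$ (as $t\ge1$). Combined with $|T|\ge|S|+1+|\mathcal C_1|/2$ and the need for $\delta\le-2$, plus the fact that $\sum_{y\in T}d_{G-S}(y)=e_G(T,V\setminus S)=|\mathcal C_1|$ means the average degree of a $T$-vertex in $G-S$ is $|\mathcal C_1|/|T|<1$, so some $y\in T$ has $d_{G-S}(y)=0$, i.e.\ $y$ is isolated in $G-S$ — but then $\{y\}$ is itself a component of $G-(S\cup T)$? No: $y\in T$. The correct contradiction: if $y\in T$ has $d_{G-S}(y)=0$ then $\{y\}$ is a component of $G-S\setminus(T\setminus\{y\})$, and moving $y$ from $T$ to outside both sets or adjusting the barrier contradicts minimality/maximality of the biased barrier — specifically $(S,T\setminus\{y\})$ has $\delta(S,T\setminus\{y\})=\delta(S,T)+2-d_{G-S}(y)+(\text{change in }h)=\delta(S,T)+2-0-1=\delta(S,T)+1$, and since $\delta$ is even this is still $\le-2$ when $\delta(S,T)\le-4$, contradicting minimality of $|T|$; and if $\delta(S,T)=-2$ exactly we get $\delta(S,T\setminus\{y\})=-1$, impossible by parity, so actually $\delta(S,T)\le-2$ with such a $y$ forces $\delta(S,T)\le-4$ hence the contradiction always fires. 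That is the finish.

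\textbf{Main obstacle.} The delicate point in both parts is the combinatorial geometry tying $c(G-S)$ (or $c(G-(S\cup\{y\}))$) to the number of odd components, and ensuring the extremal small cases ($|S|$ or $|T|$ tiny, $G$ nearly complete) don't slip through; I expect the cleanest route is to avoid direct component-counting and instead exploit the biased-barrier extremality (minimality of $|T|$) via the parity of $\delta$, which rules out any $y\in T$ isolated in $G-S$ and thereby forces enough edges from $T$ to generate $\mathcal C_{2k+1}$ with $k\ge1$, or to apply toughness to the cutset $S$ (respectively $S\cup\{y\}$) once we know it genuinely disconnects $G$. Handling the possibility that $G-S$ is connected (so $S$ is not a cutset, toughness says nothing) is the one case that must be dispatched separately, and there the barrier inequality alone with $h(S,T)\le 1$ and $|T|\ge|S|+1$ should already be contradictory.
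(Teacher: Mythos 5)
Your part (1) follows the paper's route: the cutset is $S\cup\{y\}$ where $y$ is the unique $T$-neighbour of the component $D\in\mathcal{C}_1$, and $2t\le |S|+1$ follows once $c(G-(S\cup\{y\}))\ge 2$ is established. The one point you leave as an assertion (``something containing the bulk'') is exactly what needs an argument: you must rule out $V(G)=V(D)\cup S\cup\{y\}$. The paper does this by first deriving $|S|+\sum_{k\ge 0}|\mathcal{C}_{2k+1}|\ge 2$ from $2$-connectedness ($d_G(y)\ge 2$ for $y\in T$ plus Lemma~\ref{lem:biasedbarrier}(1)--(3)) and combining it with $|T|\ge |S|+1$; with that supplied, your part (1) is fine, and the two consequences are immediate.

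Part (2) has genuine gaps. First, your barrier inequality $2|S|-2|T|+|\mathcal{C}_1|\le -2$ is wrong: when every odd component lies in $\mathcal{C}_1$ you have $\sum_{y\in T}d_{G-S}(y)=|\mathcal{C}_1|=h(S,T)$, so these terms cancel and you get only $|T|\ge |S|+1$ --- there is no extra $|\mathcal{C}_1|/2$. Second, the count $c(G-S)\ge |\mathcal{C}_1|$ is too weak to finish; the count you need (and which you mention only as ``or even more'') is $c(G-S)\ge |T|$: since each component in $\mathcal{C}_1$ sends its single edge to one vertex of $T$, each even component sends none, and $T$ is independent, distinct vertices of $T$ lie in distinct components of $G-S$. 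Then $|S|\ge t\,c(G-S)\ge |T|\ge |S|+1$ is an immediate contradiction for $t\ge 1$; this is the paper's one-line finish. Third, your fallback via minimality of $|T|$ miscomputes the change in $\delta$: if $d_{G-S}(y)=0$, then after deleting $y$ from $T$ the singleton $\{y\}$ becomes an \emph{even} component (it has no edges to $T\setminus\{y\}$), so $h$ is unchanged and $\delta(S,T\setminus\{y\})=\delta(S,T)+2$, not $\delta(S,T)+1$. The parity argument therefore evaporates, and the case $\delta(S,T)=-2$ --- where $\delta(S,T\setminus\{y\})=0$ gives no contradiction --- is not excluded. So as written, part (2) does not close; replacing the isolated-vertex detour by the component count $c(G-S)\ge |T|$ repairs it.
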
 
\pf	Since $G$
	is $1$-tough and thus is 2-connected, $d_G(y) \ge 2$
	for every $y\in T$.  This together with Lemma~\ref{lem:biasedbarrier}(1)-(3) 
	implies 
 $|S| + \sum_{k\ge 0}|\mathcal{C}_{2k+1}| \ge 2$. 
	
For the first part of (1), 	suppose to the contrary  that $|S|+1<2t$. Let $D\in \mathcal{C}_1$ and  $y\in V(T)$ be adjacent in $G$ to some vertex $v\in V(D)$.  As $e_G(D,T)=e_G(D,y)=1$, 
$|S| + \sum_{k\ge 0}|\mathcal{C}_{2k+1}| \ge 2$.  and $|T| \ge |S|+1$ by Lemma~\ref{LEM:T>S}, 
we have $c(G-(S\cup\{y\}))\ge 2$ regardless of whether or not $S=\emptyset$. But $c(G-(S\cup\{y\}))\ge 2$ implies $\tau(G) < 2t/2=t$, 
contradicting $G$ being $t$-tough.  The second part of (1) is a consequence of the first part.

	For (2), suppose to the contrary that 
	$\bigcup_{k\geq 1}\mathcal{C}_{2k+1} = \emptyset$. 
	By Lemma~\ref{lem:NoC1s}(1), $|S| + |\mathcal{C}_{1}| \ge 2$ implies $|S| \ge 1$. 
	Consequently,  $|T| \ge 2$ by Lemma~\ref{LEM:T>S}.  As every component of $G-(S\cup T)$
	in $\CC_1$ is connected to exactly one vertex of $T$, $S$
	is a cutset of $G$ with $c(G-S)\ge |T|$. However, 
	$|T|  \geq |S| + \sum_{k\geq 1}k|\mathcal{C}_{2k+1}|+1=|S|+1$, 
	implying $\tau(G)<1$, a contradiction. 
\qed

A path $P$ connecting two vertices $u$ and $v$ is called 
a {\it $(u,v)$-path}, and we write $uPv$ or $vPu$ in  order to specify the two endvertices of 
$P$. Let $uPv$ and $xQy$ be two disjoint paths. If $vx$ is an edge, 
we write $uPvxQy$ as
the concatenation of $P$ and $Q$ through the edge $vx$. Let $G$ be a graph without  a $2$-factor,
and let $(S, T)$ be a  barrier of $G$.
For $y\in T$, define 
$$
h(y)=|\{ D: \text{ $D\in \bigcup_{k\ge 1} \CC_{2k+1}$} \quad \text{and} \quad e_G(y,D) \ge 1\}|. 
$$

\begin{LEM} \label{lem:P4}
Let $G$ be a graph without  a $2$-factor,
and let $(S, T)$ be a biased barrier of $G$. Then the following statements hold.
\begin{enumerate}[(1)]
	\item If $|\bigcup_{k\geq 1}\mathcal{C}_{2k+1} | \ge 1$, then $G$ contains an induced $P_4 \cup aP_1$, where $a=|T|-2$. 

	\item If there exists $y_0\in T$ with $h(y_0) \ge 2$, then for some integer $b\ge 7$, $G$ contains an induced $P_b \cup aP_1$, where $a=|T|-3$. Furthermore,  an induced $P_b \cup aP_1$ can be taken such that the vertices in $aP_1$
	are from $T$ and the path $P_b$ has the form $y_1x_1^*P_1x_1y_0x_2P_2x_2^*y_2$, where 
	$y_0,y_1,y_2\in T$ and $x_1^*P_1x_1$ and $x_2^*P_2x_2$ are respectively contained in two distinct 
	components from  $\bigcup_{k\geq 1}\mathcal{C}_{2k+1}$ such that $e_G(x,T)=0$
	for every internal vertex $x$ from $P_1$ and $P_2$. 
\end{enumerate} 
\end{LEM}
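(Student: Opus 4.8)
The strategy is to build the required induced subgraphs explicitly from a biased barrier, extracting short paths from components of $\bigcup_{k\ge 1}\CC_{2k+1}$ and using the independence of $T$ together with the edge-count bound $|T|\ge |S|+\sum_{k\ge 1}k|\CC_{2k+1}|+1$ from Lemma~\ref{LEM:T>S} to supply enough isolated vertices. For part (1), pick $D\in\bigcup_{k\ge 1}\CC_{2k+1}$; since $e_G(D,T)\ge 3$ is odd, $D$ receives at least two edges to $T$, say to $y_1$ and $y_2$, and by Lemma~\ref{lem:biasedbarrier}(3) each of $y_1,y_2$ sends exactly one edge into $D$. Take a shortest $(u_1,u_2)$-path $P$ inside $D$ joining the endpoints $u_1,u_2$ of those two edges; then $y_1u_1Pu_2y_2$ is an induced path (shortness of $P$ inside $D$ plus $e_G(y_i,D)\le 1$ kills chords touching $y_i$, and $y_1y_2\notin E(G)$ by Lemma~\ref{lem:biasedbarrier}(1)). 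If this path has length $\ge 3$ we are done after contracting it to its first four vertices; if it has length $2$ (i.e. $u_1=u_2$) or the endpoints coincide we instead use a third edge from $D$ to $T$, or the degree-$\ge 2$ condition on $y_1$, to find a genuine $P_4$. The isolated vertices: $|T|\ge 2$ here, and the remaining $|T|-2$ vertices of $T$, being pairwise nonadjacent and (by Lemma~\ref{lem:biasedbarrier}(3)–(4)) each adjacent to at most one vertex of the chosen path, can be trimmed down to an induced $aP_1$ disjoint from $P_4$ — more precisely one argues that a $P_4$ can be chosen so that each leftover $T$-vertex has a non-neighbor available, or one simply notes that nonadjacency within $T$ already gives the induced $aP_1$ once the $P_4$ avoids them, which can be arranged by choosing the two attachment vertices $y_1,y_2$ appropriately.

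For part (2), let $y_0\in T$ have $h(y_0)\ge 2$, so $y_0$ sends an edge into two distinct components $D_1,D_2\in\bigcup_{k\ge 1}\CC_{2k+1}$, say $y_0x_1$ and $y_0x_2$ with $x_1\in V(D_1)$, $x_2\in V(D_2)$. Inside $D_1$, since $e_G(D_1,T)\ge 3$ there is another vertex $x_1^*\in V(D_1)$ adjacent to some $y_1\in T\setminus\{y_0\}$; take a shortest $(x_1,x_1^*)$-path $P_1$ in $D_1$, and symmetrically $P_2$ from $x_2$ to some $x_2^*$ adjacent to $y_2\in T\setminus\{y_0\}$ inside $D_2$. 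The concatenation $y_1x_1^*P_1x_1y_0x_2P_2x_2^*y_2$ is then a path; one checks it is induced using: $T$ independent (so $y_1y_2,y_0y_1,y_0y_2\notin E$), Lemma~\ref{lem:biasedbarrier}(3) (each $y_i$ has at most one neighbor in each $D_j$, killing chords from a $T$-endpoint to the path's interior), Lemma~\ref{lem:biasedbarrier}(4) together with the fact that internal vertices of $P_1,P_2$ chosen on a shortest path have $e_G(x,T)=0$ (so no chord from an interior vertex back to $y_0,y_1,y_2$), shortness inside each $D_j$ (no chords within $P_1$ or within $P_2$), and $D_1\ne D_2$ with no edge between distinct components (no chords between $P_1$ and $P_2$, and $y_0$ is the unique link). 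The length is $\ge 7$ because the path already has the seven named vertices $y_1,x_1^*,\dots,x_1,y_0,x_2,\dots,x_2^*,y_2$ with $x_1^*\ne x_1$ or $x_2^*\ne x_2$ forcing at least one more; a small case analysis handles $x_1=x_1^*$ (then $D_1$ is a single vertex adjacent to both $y_0$ and $y_1$, still giving enough length once combined with the $D_2$ side, or one reroutes). Finally $|T|\ge 3$ from Lemma~\ref{LEM:T>S} together with $h(y_0)\ge 2$ implying $\sum_{k\ge 1}k|\CC_{2k+1}|\ge 2$; the remaining $|T|-3$ vertices of $T$ form the induced $aP_1$ by independence of $T$, after verifying (as in part (1)) that they can be taken to have no neighbor on the path.

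The main obstacle I expect is not the global bookkeeping but the degenerate configurations: components $D_j$ that are single vertices or very short, the possibility that the "shortest path inside $D$" has length $0$ or $1$, and ensuring the leftover $T$-vertices are genuinely isolated in the induced subgraph rather than attached to the path. These require careful use of the inequality in Lemma~\ref{LEM:T>S} (to guarantee enough spare components and spare $T$-vertices) and of parts (3) and (4) of Lemma~\ref{lem:biasedbarrier} (to control where $T$ attaches), and the bound $b\ge 7$ in part (2) must be squeezed out of exactly these degenerate cases, so that is where the real work lies.
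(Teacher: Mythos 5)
Your overall strategy matches the paper's (extract induced paths from odd components, join them through vertices of $T$, and use the independence of $T$ for the isolated vertices), but two essential steps are missing or asserted without justification. First, the property that every internal vertex $x$ of the paths satisfies $e_G(x,T)=0$ --- which you need not only for inducedness of the long path but, crucially, so that \emph{all} of the remaining $|T|-2$ (resp.\ $|T|-3$) vertices of $T$ are non-adjacent to it and can serve as the $aP_1$ --- does not follow from taking a shortest path \emph{between two prescribed endpoints}, which is what you do. A shortest $(u_1,u_2)$-path in $D$ can pass through vertices adjacent to other vertices of $T$, and then those $T$-vertices are not isolated from your $P_4$; since $a$ must equal $|T|-2$ exactly, there is no slack to ``trim down,'' and re-choosing $y_1,y_2$ does not obviously help. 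The device you are missing is to fix $x_1$ with its unique $T$-neighbour $y_1$, set $W=N_G(T\setminus\{y_1\})\cap V(D)$, and take a shortest path from $x_1$ \emph{to the set} $W$: minimality forces every internal vertex outside $W$, and Lemma~\ref{lem:biasedbarrier}(3) rules out adjacency to $y_1$, so internal vertices genuinely have no $T$-neighbours. In part (2) you simply state this property as a ``fact'' about your paths, but your construction does not deliver it.

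Second, in part (2) you never address the case $y_1=y_2$, i.e.\ when the exit vertices $x_1^*\in V(D_1)$ and $x_2^*\in V(D_2)$ are adjacent to the same vertex of $T$. Then $y_1x_1^*P_1x_1y_0x_2P_2x_2^*y_2$ is a closed walk, not a path, and you get no induced $P_b$. Roughly half of the paper's proof of (2) handles exactly this: since the edges from $D_2$ to $T$ go to at least three distinct $T$-vertices, one reroutes inside $D_2$ by a shortest path from $\{x_2,x_2^*\}$ to $N_G(T\setminus\{y_0,y_1\})\cap V(D_2)$, and depending on which of $x_2,x_2^*$ that path starts at, one uses either $y_0$ or $y_1$ as the middle connector. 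Finally, a smaller point: the bound $b\ge 7$ needs $x_i\ne x_i^*$, which is immediate from Lemma~\ref{lem:biasedbarrier}(4) because $x_i$ already spends its one allowed $T$-edge on $y_0$; your sketched case analysis for $x_1=x_1^*$ is both unnecessary and inconclusive (if both coincidences occurred you would only obtain a $P_5$).
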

\pf
	Lemma~\ref{lem:biasedbarrier}(1), (3) and (4) will be applied frequently in the arguments sometimes without mentioning it. 

Let $D\in \bigcup_{k\geq 1}\mathcal{C}_{2k+1}$.  The existence of $D$ implies $|T| \ge 3$ and $|V(D)| \ge 3$ by Lemma~\ref{lem:biasedbarrier}(3) and (4). We claim that
for a fixed vertex $x_1\in V(D)$ such that $e_G(x_1,T)=1$, 
 there exists distinct $x_2\in V(D)$
and an induced $(x_1,x_2)$-path $P$ in $D$  with the following two properties:
(a) $e_G(x_2,T)= 1$,  and (b) $e_G(x,T)=0$
for every $x\in V(P)\setminus \{x_1,x_2\}$. Note that the vertex $x_1$ exists by Lemma~\ref{lem:biasedbarrier}(4). Let $y_1 \in T$ be the vertex 
such that  $e_G(x_1,T)=e_G(x_1,y_1)=1$ and $W=N_G(T\setminus \{y_1\})\cap V(D)$. 
By Lemma~\ref{lem:biasedbarrier}(4), $x_1\not \in W$. 
Now in $D$, we find a shortest path $P$ connecting $x_1$
and some vertex from $W$, say $x_2$.  Then $x_2$ and $P$ satisfy properties (a)
and (b), respectively. 
 Let $y_2\in T$ such that 
$e_G(x_2,T)=e_G(x_2,y_2)=1$.  The vertex $y_2$ uniquely exists by the choice  $x_2$ 
and Lemma~\ref{lem:biasedbarrier}(4). 
 By Lemma~\ref{lem:biasedbarrier}(1) and (4),  and the choice of $P$,   we know that $y_1x_1Px_2y_2$ and $T\setminus\{y_1,y_2\}$ together contains an induced $P_4\cup aP_1$. This  proves (1). 

We now prove (2). 
By Lemma~\ref{lem:biasedbarrier}(3), the existence of $y_0$ implies $|\bigcup_{k\geq 1}\mathcal{C}_{2k+1}| \ge 2$, which in turn gives $|T| \ge 3$ by Lemma~\ref{lem:biasedbarrier}(3) again. We let $D_1,D_2\in \bigcup_{k\geq 1}\mathcal{C}_{2k+1}$ be distinct such that 
$e_G(y_0,D_1)=1$ and $e_G(y_0,D_2)=1$.  Let $x_i\in D_i$ such that 
$e_G(y_0,D_i)=e_G(y_0,x_i)=1$. 
By the argument in the first paragraph above, 
we can find $x^*_i\in V(D_i)\setminus \{x_i\}$ and an $(x_i,x_i^*)$-path $P_i$ in $D_i$ for each $i\in \{1,2\}$. By the choice of $P_i$ and 
Lemma~\ref{lem:biasedbarrier}(4), there are unique $y_1,y_2\in T\setminus\{y_0\}$ such that $x_i^*y_i\in E(G)$. If $y_1\ne y_2$, 
by the choice of $P_1$
and $P_2$ and Lemma~\ref{lem:biasedbarrier}(1) and (4), we know that $y_1x_1^*P_1x_1y_0x_2P_2x_2^*y_2$ and $T\setminus\{y_0,y_1,y_2\}$ together contain an induced $P_b\cup aP_1$  for some integer $b\ge 7$.  
Thus we assume $y_1=y_2$.  Then the vertex $y_1$ can also play the role of $y_0$. 
Let  $W=N_G(T\setminus \{y_0,y_1\})\cap V(D_2)$. 
 By Lemma~\ref{lem:biasedbarrier}(4), $x_2, x_2^*\not \in W$. 
 Now in $D_2$, we find a shortest path $P_2^*$ connecting some vertex of $\{x_2,x_2^*\}$
 and some vertex from $W$, say $z$.  If $P_2^*$
 is an $(x_2,z)$-path, then $y_1x^*_1P_1x_1y_0x_2P_2^*z$ and $T\setminus\{y_0,y_1,y_2\}$ together
 contain an induced $P_b\cup aP_1$. 
  If $P_2^*$
 is an $(x^*_2,z)$-path, then $y_0x_1P_1x^*_1y_1x_2^*P_2^*z$ and $T\setminus\{y_0,y_1,y_2\}$ together
 contain an induced $P_b\cup aP_1$.
  The second part of 
(2) is clear by the construction above. 
\qed

Let $G$ be a non-complete graph. A cutset $S$
of $V(G)$ is a \emph{toughset} of $G$ if  $\frac{|S|}{c(G-S)}=\tau(G)$.  
\begin{LEM}\label{lem:tough-set}
If $G$ is a connected  graph and $S$ is a toughset of $G$, then for every $x\in S$, $x$ is adjacent in $G$ to vertices from at least two components of 
$G-S$. 
\end{LEM}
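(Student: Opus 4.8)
The plan is a short proof by contradiction. Since $S$ is a toughset of $G$ and $G$ is connected, $G$ is non-complete and $S$ is a cutset, so $c(G-S)\ge 2$ and $\tau(G)=|S|/c(G-S)$. Assume, for contradiction, that some $x\in S$ is adjacent in $G$ to vertices of at most one component of $G-S$, and put $S'=S\setminus\{x\}$. I would first observe that the case $|S|=1$ cannot arise under this assumption: if $S=\{x\}$, then, $G$ being connected and $c(G-\{x\})\ge 2$, the vertex $x$ must have a neighbour in every component of $G-\{x\}$, contradicting the assumption. So from now on $|S|\ge 2$ and $S'\ne\emptyset$.

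Next I split into two cases according to how many components $x$ meets. If $x$ has no neighbour in $G-S$, then $x$ is isolated in $G-S'$ (all its neighbours lie in $S$), so $c(G-S')=c(G-S)+1\ge 3$; thus $S'$ is a cutset and, since $G$ is $\tau(G)$-tough, $\tau(G)\le |S'|/c(G-S')=(|S|-1)/(c(G-S)+1)$. Cross-multiplying and using $c(G-S)\ge 1$ gives $(|S|-1)/(c(G-S)+1)<|S|/c(G-S)=\tau(G)$, a contradiction. If instead every neighbour of $x$ outside $S$ lies in a single component $D$ of $G-S$, then restoring $x$ to $G-S$ merges $x$ into $D$ and leaves all other components intact, so $c(G-S')=c(G-S)\ge 2$; hence $S'$ is a cutset and $\tau(G)\le |S'|/c(G-S')=(|S|-1)/c(G-S)<|S|/c(G-S)=\tau(G)$, again a contradiction. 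These two cases exhaust the assumption, so the claim follows.

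There is no genuinely hard step here; the only points requiring care are confirming in each case that $S'$ is still a cutset — which is precisely why the split into ``$x$ meets no component'' and ``$x$ meets one component'' is needed, since $c(G-S')$ equals $c(G-S)+1$ in the first case and $c(G-S)$ in the second — and disposing of the degenerate $|S|=1$ case up front via connectivity rather than via the toughness inequality. The two numerical inequalities are one-liners, and I would keep them inline rather than displayed.
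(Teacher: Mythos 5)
Your proof is correct and takes essentially the same approach as the paper's: delete $x$ from the toughset $S$ and observe that the ratio $\frac{|S\setminus\{x\}|}{c(G-(S\setminus\{x\}))}$ falls below $\tau(G)$, contradicting toughness. In fact your write-up is slightly more careful than the paper's one-line computation, which asserts $c(G-(S\setminus\{x\}))=c(G-S)$ without separating out the case where $x$ has no neighbour outside $S$ (where the component count increases by one) or the degenerate case $|S|=1$; you handle both cleanly and neither affects the conclusion.
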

\pf Assume to the contrary that there exists $x\in S$
such that $x$ is adjacent in $G$ to vertices from at most one 
component of $G-S$. Then $c(G-(S\setminus\{x\}))=c(G-S)\ge 2$
and 
$$\frac{|S\setminus\{x\}|}{c(G-(S\setminus\{x\}))}<\frac{|S|}{c(G-S)}=\tau(G),$$
contradicting $G$ being $\tau(G)$-tough.  
\qed

\section{Proof of Theorems~\ref{theorem1}, \ref{theorem2}, and~\ref{theorem3}}\label{section:proof}

Let $R$ be any linear forest on at most 7 vertices. 
If $G$ is $R$-free, then $G$
 is also $R^*$-free for any supergraph $R^*$ of $R$. To prove Theorems~\ref{theorem1} to~\ref{theorem3}, 
 we will show that the corresponding statements hold for a supergraph $R^*$  
 of $R$, which simplifies the cases of   possibilities of  $R$.   Let us first 
 list the supergraphs that we will use. 
\setlength{\parindent}{0pt} 

\begin{enumerate}[(1)]
	\item $P_4\cup 3P_1$ is a supergraph of the following graphs: 
		$P_4 \cup 2P_1, P_3 \cup 3P_1,$ and $P_2 \cup 4P_1$;
		\item $6P_1$ is a supergraph of $5P_1$; 
		\item $P_3\cup 2P_2$ is a supergraph of $3P_2$; 
	\item $P_7 \cup 2P_1$  is a supergraph of the following graphs:
	\begin{enumerate}
		\item $P_5, P_3\cup P_2, 2P_2\cup P_1$;
		\item $P_6, P_5\cup P_1, P_4\cup P_2, 2P_3, P_3\cup P_2\cup P_1,  2P_2\cup 2P_1$;
		\item $P_7, P_6\cup P_1,  P_5\cup 2P_1,  P_4\cup P_2\cup P_1, 2P_3\cup P_1,  P_3\cup P_2\cup 2P_1,  2P_2\cup 3P_1$. 
	\end{enumerate}
\end{enumerate}

Those supergraphs above together with the graphs $R$ listed below  cover all 
the 33  $R$ graphs described in Theorems~\ref{theorem1} to~\ref{theorem3}. 
Theorems~\ref{theorem1} to~\ref{theorem3} are then consequences of the theorem below. 
\begin{THM}\label{thm9}
	Let $t>0$ be a real number, $R$ be a linear forest, and $G$ be a $t$-tough $R$-free graph on at least 3 vertices. Then $G$ has a 2-factor provided that 
	\begin{enumerate}[(1)]
	    	\item $R \in \{P_4 \cup P_1, P_3\cup 2P_1, P_2\cup 3P_1\}$  and $t=1$ unless
	    \begin{enumerate}
	    	\item $R = P_2 \cup 3P_1$, and $G \cong H_0$ or $G$ contains $H_1$, $H_2$, $H_3$ or $H_4$ as a spanning subgraph such that $E(G)\setminus E(H_i) \subseteq E_G(S, V(G)\setminus (T\cup S))$ for each $i\in [1,3]$, 
	    	 where $H_i$, $S$ and $T$ are defined in Figure~\ref{f1}.    
	    	\item $R=P_3 \cup 2P_1$ and $G$ contains $H_1$ as a spanning subgraph such that $E(G)\setminus E(H_1) \subseteq E_G(S, V(G)\setminus (T\cup S))$.  
	    \end{enumerate}
	    \item $R \in \{P_4\cup 3P_1, P_3\cup 4P_1, P_2\cup 5P_1, 6P_1\}$ and $t>1$ unless 
	    \begin{enumerate}
	    \item  when $R\ne P_4\cup 3P_1$, $G$ contains $H_5$ with $p=5$ as a spanning subgraph such that $E(G)\setminus E(H_5) \subseteq E_G(S, V(G)\setminus (T\cup S)) \cup E(G[S])$, where $H_5$, $S$ and $T$ are defined in Figure~\ref{f2}.  
	    \item $R=P_2 \cup 5P_1$ and $G$ contains one of  $H_6, \ldots, H_{11}$ as a spanning subgraph such that $E(G)\setminus E(H_i) \subseteq E_G(S, V(G)\setminus (T\cup S)) \cup E(G[S]) \cup  E(G[ V(G)\setminus (T\cup S)])$,    where $H_i$, $S$ and $T$ are defined in Figure~\ref{f3} for each $i\in [6,11]$. 
	    \end{enumerate}
	    	    \item $R=7P_1$ and $t>\frac{7}{6}$ unless $G$ contains $H_5$ with $p=5$ as a spanning subgraph such that $E(G)\setminus E(H_5) \subseteq E_G(S, V(G)\setminus (T\cup S)) \cup E(G[S])$. 
	    \item $R \in \{P_7\cup 2P_1, P_5 \cup P_2, P_4 \cup P_3, P_3 \cup 2P_2, 3P_2 \cup P_1 \}$ and $t=3/2$.
	   \end{enumerate} \end{THM}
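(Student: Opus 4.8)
The plan is to argue by contradiction: suppose $G$ is a $t$-tough $R$-free graph on at least three vertices with no $2$-factor for one of the listed pairs $(R,t)$, and let $(S,T)$ be a biased barrier of $G$, which exists by Lemma~\ref{tutte's theorem}. By Lemma~\ref{lem:NoC1s}(2) we have $\bigcup_{k\ge 1}\CC_{2k+1}\ne\emptyset$, so Lemma~\ref{lem:P4}(1) already forces an induced $P_4\cup aP_1$ with $a=|T|-2$. Combined with Lemma~\ref{LEM:T>S} (giving $|T|\ge |S|+\sum_{k\ge1}k|\CC_{2k+1}|+1$) and the toughness inequality $t\cdot c(G-S')\le |S'|$ applied to cutsets $S'$ built from $S$ together with vertices of $T$, the size of $T$ is pinned down from below, and $R$-freeness pins it from above. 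The overall strategy is: first bound $|T|$, $|S|$, and the $|\CC_{2k+1}|$ using toughness plus the forbidden-subgraph structure from Lemma~\ref{lem:P4}; then in the few small remaining configurations, reconstruct $G$ (or a spanning subgraph of it) explicitly and check it is one of the exceptional graphs $H_0,\dots,H_{11}$ in Figures~\ref{f1}--\ref{f3}, or else derive a contradiction.

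First I would handle part (4), where $R$ contains $2K_2$ and $t=3/2$. Here $R$-free implies $2K_2$-free-ish structure on the components: by Lemma~\ref{lem:biasedbarrier}, distinct components $D_1,D_2$ of $G-(S\cup T)$ each contribute an edge, and picking one edge from each together with (if present) a vertex of $T$ outside their neighborhoods would build a forbidden linear forest; this forces $\bigcup_{k\ge1}\CC_{2k+1}$ to be small (essentially a single component, or the components are tightly controlled), and $|T|$ small. Then $\delta(S,T)\le -2$ together with $\tau(G)\ge 3/2$ and the explicit small size of $T$ and the components yields a direct numerical contradiction — this mirrors the Ota--Sanka $2K_2$-free argument and should go through with the extra room that $P_7\cup 2P_1$ (etc.) gives slightly longer paths than $2K_2$, but the counting is essentially the same because the barrier components are cliques-joined-to-$T$ in the tight case.

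For parts (1)--(3) the forbidden forest is $P_\ell\cup aP_1$ with short path and many isolated vertices, so the binding constraint is the number $a=|T|-2$ (or $|T|-3$) of vertices available in $T$ outside a prescribed neighborhood. From Lemma~\ref{lem:P4}(1), $|T|-2\le a_{\max}(R)$, and from Lemma~\ref{lem:P4}(2), if some $y_0\in T$ meets two of the odd components then $|T|-3\le a_{\max}(R)$ as well (via a $P_b$, $b\ge7$, forbidden in the $5,6$-vertex cases and constrained in the $7$-vertex case); this is how $6P_1$, $P_2\cup 3P_1$, $P_2\cup 5P_1$, $7P_1$ each force $|T|\le 3$, $4$, $5$, or $6$ respectively. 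Together with $|T|\ge |S|+\sum k|\CC_{2k+1}|+1\ge |S|+|\bigcup\CC_{2k+1}|$ and Lemma~\ref{lem:NoC1s}(1) (which kills $\CC_1$ when $t>1$ and $|S|\le1$, and bounds $|S|$ when $\CC_1\ne\emptyset$), one gets only finitely many combinatorial types of $(S,T,\{\CC_{2k+1}\})$. For each surviving type I would: use $t$-toughness applied to $S$ and to $S\cup\{y\}$ for $y\in T$ to bound each $|V(D)|$ and the adjacency pattern between $T$ and the $D$'s; then Lemma~\ref{lem:tough-set}-style reasoning plus $R$-freeness forces each $D$ to be a clique whose vertices outside $N_G(T)$ all see a single common structure, matching exactly the pictures $H_0,\dots,H_{11}$ — at which point either $G$ is (a spanning supergraph, with the allowed extra edges, of) that $H_i$, landing in the stated exception, or the extra edges would create the forbidden $R$, giving the $2$-factor by contradiction.

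The main obstacle I expect is the exception analysis in part (2)/(3): showing that the exceptional spanning subgraphs are \emph{exactly} $H_5$ and $H_6,\dots,H_{11}$ and not something else, and that the permitted extra edge sets $E_G(S,V(G)\setminus(T\cup S))\cup E(G[S])\cup E(G[V(G)\setminus(T\cup S)])$ are precisely the edges that can be added without creating an induced $P_2\cup 5P_1$ (resp.\ $7P_1$). Concretely, one must rule out both ``too many'' odd components and odd components with $e_G(D,T)\ge 3$ combining badly, and then verify that within a single tight component the internal non-$T$-adjacent vertices cannot avoid forming an induced $P_2$ together with five independent $T$-vertices unless the component has one of the few listed shapes; this is a finite but delicate case check on graphs with roughly $|T|+|S|+\sum(2k{+}1)|\CC_{2k+1}|$ vertices, and getting the figure-by-figure correspondence airtight (including $p=5$ being forced) is where the real work lies.
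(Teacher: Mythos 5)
Your outline for parts (1)--(3) follows the paper's strategy: bound $|T|$ via Lemma~\ref{lem:P4}(1), combine with Lemma~\ref{LEM:T>S} and Lemma~\ref{lem:NoC1s} to reduce to finitely many configurations of $(S,T,\{\CC_{2k+1}\})$, then reconstruct the exceptional graphs; this is only a plan, and the figure-by-figure case analysis you defer is the bulk of the work, but the mechanism is right. The genuine gap is in part (4). There, $R$-freeness does \emph{not} pin $|T|$ from above: none of $P_7\cup 2P_1$, $P_5\cup P_2$, $P_4\cup P_3$, $P_3\cup 2P_2$, $3P_2\cup P_1$ is an induced subgraph of $P_4\cup aP_1$ for any $a$ (each would require either a path on at least $5$ vertices or two vertex-disjoint induced $P_2$'s, and a single $P_4$ supplies neither), so Lemma~\ref{lem:P4}(1) gives no upper bound on $|T|$, which can be arbitrarily large here. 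Likewise, one edge from each of two odd components plus a $T$-vertex induces at most a $2P_2\cup P_1$ on five vertices, which cannot contain any of these seven-vertex forests, so this does not force $\bigcup_{k\ge1}\CC_{2k+1}$ to be small either. Your proposed endgame --- ``$\delta(S,T)\le-2$ together with $\tau(G)\ge 3/2$ and the explicit small size of $T$ yields a direct numerical contradiction'' --- therefore has nothing to work with, and the assertion that the components are ``cliques-joined-to-$T$ in the tight case'' is never established.

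What part (4) actually requires is a global counting argument that your sketch omits. One first proves that some $y_0\in T$ meets at least two odd components: otherwise, taking $W=S\cup\bigcup_D W_D$ with $W_D$ a set of $2k$ attachment vertices in each $D\in\CC_{2k+1}$ gives a cutset with $c(G-W)\ge|T|$, and $|W|\ge\frac32|T|$ contradicts $|T|\ge|S|+\sum_{k\ge1}k|\CC_{2k+1}|+1$. Lemma~\ref{lem:P4}(2) then yields an induced $P_b\cup(|T|-3)P_1$ with $b\ge 7$, which disposes of $R=P_7\cup 2P_1$ and, after extending or splicing paths through a third odd component, of the remaining $R$'s whenever $|\bigcup_{k\ge1}\CC_{2k+1}|\ge 3$. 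The case of exactly two odd components is then closed not by bounding $|T|$ but by a second counting argument: partition $T$ by adjacency to $D_1,D_2$ into $T_0,T_{11},T_{12},T_2$, form the cutset $W=S\cup W_1\cup W_2\cup W_3$ from the attachment vertices, and play $|W|\ge\frac32\,c(G-W)$ against Lemma~\ref{LEM:T>S}. Without these steps part (4) does not go through.
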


\pf Assume by contradiction that $G$ does not have a 2-factor. By Lemma~\ref{tutte's theorem}, $G$ has a barrier. We choose $(S,T)$ to be a biased barrier.  Thus $(S,T)$ and $G$ satisfy all the properties listed in Lemma~\ref{lem:biasedbarrier}. These properties will be used frequently 
even without further mentioning sometimes.  
By Lemma~\ref{LEM:T>S},
\begin{equation}\label{eqn1}
|T| \ge   |S|+\sum_{k\ge 1}k|\CC_{2k+1}|+1. 
\end{equation}
Since $t\ge 1$, by Lemma~\ref{lem:NoC1s}(2), we know that 
\begin{equation}\label{eqn2}
\bigcup_{k\geq 1}\mathcal{C}_{2k+1} \neq \emptyset. 
\end{equation}
This implies  $|T| \ge 3$ and so $G$ contains an induced $P_4\cup P_1$ by Lemma~\ref{lem:P4} (1). 
Thus we assume $R\ne P_4\cup P_1$ in the rest of the proof. 

\begin{CLAIM}\label{clm0}
$R\not\in \{P_3 \cup 2P_1, P_2 \cup 3P_1\}$ unless $G$ falls under one of the exceptional cases as in (a) and (b) of Theorem~\ref{thm9}(1). 
\end{CLAIM}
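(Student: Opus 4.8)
The plan is to run a case analysis driven by the two structural levers we already have: Lemma~\ref{lem:P4}, which produces an induced $P_4\cup aP_1$ with $a=|T|-2$, and the refined Lemma~\ref{lem:P4}(2), which (when some $y_0\in T$ sees at least two odd components) produces an induced $P_b\cup aP_1$ with $b\ge 7$, $a=|T|-3$. Since by~\eqref{eqn2} we have $\bigcup_{k\ge 1}\CC_{2k+1}\ne\emptyset$ and hence $|T|\ge 3$, the graph $G$ already contains an induced $P_4\cup P_1$; so the only way $G$ can be $(P_3\cup 2P_1)$-free or $(P_2\cup 3P_1)$-free is if $|T|$ is small and the odd-component/$T$ interaction is extremely rigid. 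Concretely, I would first argue that $|T|=3$: if $|T|\ge 4$ then Lemma~\ref{lem:P4}(1) gives an induced $P_4\cup 2P_1\supseteq P_3\cup 2P_1\supseteq P_2\cup 3P_1$, contradicting $R$-freeness in either case. So from now on $|T|=3$, and by~\eqref{eqn1} and Lemma~\ref{lem:NoC1s}, $|S|\le |T|-1-\sum_{k\ge1}k|\CC_{2k+1}|\le 2-\sum_{k\ge1}k|\CC_{2k+1}|$; in particular $\sum_{k\ge1}k|\CC_{2k+1}|\le 2$, so the odd components carry very little $T$-degree in total.

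Next I would split on the value of $|S|$. Since $t=1$, Lemma~\ref{lem:NoC1s}(1) tells us $\CC_1\ne\emptyset$ forces $|S|\ge 1$, and more usefully gives $|S|+1\ge 2t=2$, i.e.\ $|S|\ge 1$ whenever there is a $\CC_1$-component; combined with $|T|\ge|S|+1$ and $2$-toughness-flavored cut arguments (the $S\cup\{y\}$ cut as in the proof of Lemma~\ref{lem:NoC1s}) this will pin down $|S|\in\{0,1\}$ in the relevant subcases, since $|S|=2$ would force $\sum_{k\ge1}k|\CC_{2k+1}|=0$, contradicting~\eqref{eqn2}. For each of $|S|=0$ and $|S|=1$, I then enumerate the possible multisets $\{|\CC_1|,|\CC_3|,|\CC_5|\}$ compatible with $|T|=3$, Lemma~\ref{lem:P4}, $d_G(y)\ge 2$ for each $y\in T$ (from $2$-connectivity), and Lemma~\ref{lem:biasedbarrier}(3)--(4). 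The surviving configurations are: $|S|=1$ with three components in $\CC_1$ (each attached to a distinct $y\in T$) — but then $S=\{x\}$ is a cutset with $c(G-S)\ge 4>|S|+1$ would violate $1$-toughness unless the three $\CC_1$-components are single vertices and $T$ re-attaches things; tracking this carefully yields exactly the graph $H_1$ (three triangles... no: one triangle $D$ plus the pendant structure through $T$ to $x$); and the configurations with one component in $\CC_3$ and the rest small, which force the local structure around $D$ to look like one of $H_0,H_1,H_2,H_3,H_4$ once we also use that $e_G(x,T)\le 1$ for $x\in V(D)$ and minimality of the biased barrier to forbid extra chords. The point is that $R$-freeness collapses the component structure to finitely many skeletons, and the spanning-subgraph clause $E(G)\setminus E(H_i)\subseteq E_G(S,V(G)\setminus(T\cup S))$ precisely records which extra edges (from $x\in S$ into the non-$T$ part) are still allowed without creating a forbidden induced $P_3\cup 2P_1$ or $P_2\cup 3P_1$.

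For the bookkeeping I would handle $(P_2\cup 3P_1)$-free and $(P_3\cup 2P_1)$-free together, noting $P_2\cup 3P_1\subseteq P_3\cup 2P_1$, so every configuration surviving the $P_3\cup 2P_1$ analysis also survives the $P_2\cup 3P_1$ analysis; hence the $P_2\cup 3P_1$ case has (weakly) more exceptions, which is why $H_0,\dots,H_4$ appear there while only $H_1$ appears for $P_3\cup 2P_1$. The computation separating "$H_1$ survives $P_3\cup 2P_1$-freeness" from "$H_2,H_3,H_4,H_0$ do not" is a direct inspection: in each $H_i$ one exhibits (or fails to exhibit) an induced $P_3\cup 2P_1$ using two triangle-vertices plus a path of length two through $T$. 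I expect the main obstacle to be the exhaustive-but-finite enumeration in the last step: ruling out all chords and all "wrong" attachment patterns requires repeatedly invoking that $(S,T)$ is a \emph{biased} barrier (maximal $|S|$, then minimal $|T|$) to move a vertex of an odd component into $S$ or a vertex of $T$ out, deriving a contradiction with maximality/minimality — this is where the argument is least mechanical and where care is needed to be sure the five listed graphs are exactly the exceptions and nothing is missed or double-counted.
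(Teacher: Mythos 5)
The opening move is the same as the paper's: use Lemma~\ref{lem:P4}(1) to get an induced $P_4\cup(|T|-2)P_1$ and conclude $|T|=3$, then use Equation~\eqref{eqn1} to bound $|S|+\sum_{k\ge1}k|\CC_{2k+1}|\le 2$. From there, however, your proposal stops being a proof. The entire content of the claim lies in the structural analysis of the two surviving configurations, which (since $|T|=3$ forces every odd component to lie in $\CC_1\cup\CC_3$, and $\delta(G)\ge 2$ together with Lemma~\ref{lem:biasedbarrier} forces $|S|+|\bigcup_{k\ge1}\CC_{2k+1}|=2$) are exactly $S=\emptyset$ with $|\CC_3|=2$ and $|S|=1$ with $|\CC_3|=1$. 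In each one the paper shows by hand that: for $R=P_3\cup2P_1$ an induced copy or a toughness violation always arises unless $G$ is built on $H_1$; and for $R=P_2\cup3P_1$ the components must have exactly $3$ or $4$ vertices, must be $K_3$, $K_{1,3}$, or one of a few specific $4$-vertex graphs, and every alternative yields either an induced $P_2\cup3P_1$ or an explicit cutset $W$ with $c(G-W)>|W|$. Your proposal replaces all of this with ``tracking this carefully yields exactly the graph $H_1$'' and ``force the local structure around $D$ to look like one of $H_0,\dots,H_4$.'' That enumeration \emph{is} the proof of the claim; asserting its outcome is a genuine gap, and you acknowledge as much in your final sentence.

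Two of the assertions you do make are also wrong. First, the ``surviving configuration'' of $|S|=1$ with three components in $\CC_1$ is impossible: Equation~\eqref{eqn2} requires a component of $\bigcup_{k\ge1}\CC_{2k+1}$, i.e.\ one with at least three edges to $T$, so $\CC_3\ne\emptyset$ in every case (and $H_1$ arises from $|S|=1$ with $D=K_3\in\CC_3$, not from $\CC_1$-components). Second, $P_2\cup3P_1$ and $P_3\cup2P_1$ both have five vertices and neither is an induced subgraph of the other, so being free of one does not imply being free of the other; your inference that the $P_3\cup2P_1$ exception list is automatically contained in the $P_2\cup3P_1$ exception list does not follow (it happens to be true here only because $H_1$ is verified separately to be free of both). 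Finally, the plan to rule out chords and attachment patterns ``by invoking that $(S,T)$ is a biased barrier to move a vertex into $S$ or out of $T$'' is not how this can be made to work: the paper eliminates the unwanted structures by exhibiting induced copies of $R$ or explicit cutsets violating $1$-toughness, not by perturbing the barrier.
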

\pf Assume instead that $R \in \{P_3 \cup 2P_1, P_2 \cup 3P_1\}$. Thus $t=1$.  We may assume that 
$G$ does not fall under any of the exceptional cases as in (a) and (b) of Theorem~\ref{thm9} (1). 

It must be the case that $|T|=3$, as otherwise  $G$ contains an induced $P_4\cup 2P_1$ by Lemma~\ref{lem:P4}(1),
and so contains an induced $R$. 
 By Equation~\eqref{eqn1}, we have $|\bigcup_{k\geq 1}\mathcal{C}_{2k+1} |+|S| \le 2$. By Lemma~\ref{lem:NoC1s}(1), we have that 
 $\CC_0=\emptyset$ if $S=\emptyset$. 
 Since $G$ is 1-tough and so $\delta(G) \ge 2$,  Lemma~\ref{lem:biasedbarrier}(1)-(3) implies that $|\bigcup_{k\geq 1}\mathcal{C}_{2k+1} |+|S| = 2$. By~\eqref{eqn2}, we have the two cases below. 
 
\medskip 
{\normalsize \scshape Case 1:} $|\bigcup_{k\geq 1}\mathcal{C}_{2k+1} | = 2$ and $S= \emptyset$.  \\

Let $D_1, D_2 \in \bigcup_{k\geq 1}\mathcal{C}_{2k+1}$ be the two odd components of $G-(S\cup T)$. 
Since $|T|=3$, Lemma~\ref{lem:biasedbarrier}(3) implies that $e_G(D_i,T)=3$
for each $i\in [1,2]$. Let $y\in T$ and $x\in V(D_1)$ such that $xy\in E(G)$. 
We let $x_1$ be a neighbor of $x$ from $D_1$. Then $yxx_1$
is an induced $P_3$ by Lemma~\ref{lem:biasedbarrier}(3). 
Let $y_1 \in T\setminus\{y\}$ such that $y_1 x_1\not\in E(G)$, 
which is possible as $|T|=3$ and $e_G(x_1,T) \le 1$ by Lemma~\ref{lem:biasedbarrier}(4).
We now let $x_2 \in V(D_2)$ such that $e_G(x_2, \{y,y_1\})=0$, 
which is again possible as $|N_G(T)\cap V(D_2)|=3$ and each vertex of $D_2$
is adjacent in $G$ to at most one vertex of $T$.  However, $yxx_1, y_1$ and $ x_2$ 
together form an induced copy of $P_3 \cup 2P_1$. 
Therefore, we assume $R = P_2 \cup 3P_1$. 

We first claim that $|V(D_i)|=3$ for each
$i\in [1,2]$. Otherwise, say $|V(D_2)| \ge 4$. 
Let $y\in T$ and $x\in V(D_1)$ such that $xy\in E(G)$. 
Take $x_1 \in V(D_2)$ such that $e_G(x_1, T)=0$, which exists as $|N_G(T)\cap V(D_2)|=3$. 
Then $xy, x_1$ and $T\setminus \{y\}$ together form an induced copy of $P_2 \cup 3P_1$, giving a contradiction. 
We next claim that $D_i=K_3$ for each
$i\in [1,2]$. Otherwise, say $D_1\ne K_3$. As $D_1$ is connected, it follows that $D_1=P_3$. 
If also $D_2\ne K_3$ and so $D_2=P_3$, then deleting the two vertices of degree 2 from both $D_1$
and $D_2$ gives three components (note that each vertex of $T$ is adjacent in $G$ to one vertex of $D_1$ and one vertex of $D_2$), showing that $\tau(G) \le 2/3 <1$. 
Thus $D_2= K_3$. We let $x_1, x_2\in V(D_1)$ be nonadjacent, $y_1,y_2\in T$
such that $e_G(x_i,y_i)=1$ for each $i\in [1,2]$, and $z_1,z_2\in V(D_2)$
such that $e_G(y_i,z_i)=1$ for each $i\in [1,2]$. 
Let $y\in T\setminus \{y_1,y_2\}$. Then $z_1z_2, y, x_1$ and $x_2$ together form an induced copy of $P_2 \cup 3P_1$, giving a contradiction.  

Thus $|V(D_i)|=3$ and $D_i=K_3$ for each
$i\in [1,2]$. However, this implies that $G\cong H_0$. 


\medskip 
{\normalsize \scshape Case 2:} $|\bigcup_{k\geq 1}\mathcal{C}_{2k+1} | = 1$ and $|S|=1$.  \\

Let $D\in \bigcup_{k\geq 1}\mathcal{C}_{2k+1}$ be the  odd component of $G-(S\cup T)$. 
Assume first that $R=P_3\cup2P_1$. Then we have $|V(D)|=3$. 
Otherwise, $|V(D)| \ge 4$. Let $x\in V(D)$ such that $e_G(x,T)=0$ 
and $P$ be a shortest path of $D$ from $x$
to a vertex, say 
 $x_1\in V(D)\cap N_G(T)$. Let $y\in T$ such that  $e_G(x_1,y)=1$. 
 Then $xPx_1y$ and $T\setminus\{y\}$ form an induced copy of $R$, a contradiction. 

Since $G$ does not contain $H_1$ as a spanning subgraph such that $E(G)\setminus E(H_1) \subseteq E_G(S, V(G)\setminus (T\cup S))$, it follows that $D\ne K_3$.  As $D$ is connected, it follows that $D=P_3$. 
Now deleting the vertex in $S$
together with the degree 2 vertex of $D$ produces three components, showing that $\tau(G) \le 2/3 <1$. 

Therefore, we assume now that $R=P_2\cup3P_1$. Since $G$ does not contain $H_1$ as a spanning subgraph such that $E(G)\setminus E(H_1) \subseteq E_G(S, V(G)\setminus (T\cup S))$, the argument for the 
case $R=P_3\cup2P_1$ above implies that $|V(D)| \ge 4$. 
We claim that $|V(D)| = 4$. If $|V(D)| \ge 5$, we let $x_1,x_2\in V(D) \setminus N_G(T)$ be any two distinct 
vertices. If $x_1x_2\in E(G)$, then $x_1x_2$ together with $T$ form an induced copy of $R$, a contradiction. 
Thus $V(D) \setminus N_G(T)$ is an independent set in $G$. 
However, $c(G-(S\cup (N_G(T)\cap V(D))))=|T|+|V(D) \setminus N_G(T)| \ge 5$, implying that $\tau(G) \le 4/5<1$. 

Thus $|V(D)|=4$. Let $x\in V(D)$ such that $e_G(x,T)=0$. Since $G$ does not contain  $H_i$  as a spanning subgraph such that $E(G)\setminus E(H_i) \subseteq E_G(S, V(G)\setminus (T\cup S))$ for each $i\in [2,4]$,  it follows that  either $d_D(x) \le 2$ or $d_D(x)=3$ and $D=K_{1,3}$. 
 If $d_D(x)=3$, then as $D=K_{1,3}$, we have $c(G-(S\cup\{x\}))=3$, implying $\tau(G) \le 2/3<1$. 
 Thus $d_D(x) \le 2$. Let $V(D)=\{x, x_1, x_2, x_3\}$ and assume $xx_1\not\in E(D)$. 
 Then $c(G-(S\cup\{x_2, x_3\}))=4$, implying $\tau(G) \le 3/4<1$. The proof of Case 2 is complete. 
  \qed 

Thus by Claim~\ref{clm0} and the fact that $R \ne P_4\cup P_1$, we can assume $R\not\in \{P_4\cup P_1, P_3 \cup 2P_1, P_2 \cup 3P_1\}$ 
from this point on.
Therefore we have  $t>1$. This implies that  $G$ is 3-connected and so $\delta(G) \ge 3$. Thus
$|S|+ |\bigcup_{k\ge 0}\CC_{2k+1}| \ge 3$ by Lemma~\ref{lem:biasedbarrier}(1)-(4). 

\begin{CLAIM}\label{clm1}
$|T| \ge 5$. 
\end{CLAIM}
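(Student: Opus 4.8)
\textbf{Proof plan for Claim~\ref{clm1}.} The goal is to show $|T|\ge 5$ under the standing assumptions: $(S,T)$ is a biased barrier, $t>1$ (so $G$ is $3$-connected, $\delta(G)\ge 3$), $R\notin\{P_4\cup P_1, P_3\cup 2P_1, P_2\cup 3P_1\}$, and $R$ is one of the linear forests handled in Theorem~\ref{thm9}(2)--(4). We already know $\bigcup_{k\ge 1}\CC_{2k+1}\ne\emptyset$ by~\eqref{eqn2}, hence $|T|\ge 3$. So the plan is to rule out $|T|=3$ and $|T|=4$, each time either deriving an induced copy of the relevant forbidden $R^*$ via Lemma~\ref{lem:P4}, or a toughness violation, or landing in one of the listed exceptional spanning-subgraph configurations.

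First I would dispose of $|T|=3$. By~\eqref{eqn1} we get $|S|+\sum_{k\ge1}k|\CC_{2k+1}|\le 2$, and combined with $|S|+|\bigcup_{k\ge0}\CC_{2k+1}|\ge 3$ and~\eqref{eqn2}, the possibilities are tightly constrained (essentially: either $|S|=1$ and exactly one component in $\bigcup_{k\ge1}\CC_{2k+1}$ together with at least one $\CC_1$- or $\CC_0$-component, or $|S|=0$ with two such components — but $S=\emptyset$ forces $\CC_1=\emptyset$ by Lemma~\ref{lem:NoC1s}(1)). In each sub-case, since $R^*$ for cases (2)--(4) contains at least $P_4\cup 3P_1$ or $P_5\cup P_2$ or similar — in particular each $R^*$ under consideration has at least $5$, $7$, or $9$ vertices and is a supergraph of $P_2\cup 3P_1$ but more — I would use Lemma~\ref{lem:P4}(1) to extract $P_4\cup aP_1$ with $a=|T|-2=1$, so $P_4\cup P_1$; this alone is not enough. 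The real leverage: a component $D\in\CC_{2k+1}$ with $k\ge 1$ has $e_G(D,T)\ge 3$, hence $|V(D)|\ge 3$ with at least three distinct attachment vertices, so one can walk a longer induced path inside $D$ and pick up unused vertices of $T$, or — when $|T|=3$ is too small to host the required isolated vertices — instead count components of $G$ minus an appropriate cutset to contradict $t>1$. I expect the bookkeeping here to mirror Case 1 and Case 2 of Claim~\ref{clm0} closely, with the same style of argument (when $D\ne K_3$ or $D$ has a low-degree internal vertex, delete to produce many components; otherwise the structure is forced and one checks it is an exceptional graph or yields the forbidden induced subgraph).

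Next, $|T|=4$. Here Lemma~\ref{lem:P4}(1) gives an induced $P_4\cup 2P_1$, which already handles every $R^*$ in case (4) that contains $P_4\cup 2P_1$ — but I must be careful, since $P_7\cup 2P_1$, $P_5\cup P_2$, $P_4\cup P_3$, $P_3\cup 2P_2$, $3P_2\cup P_1$ are the targets there and not all contain $P_4\cup 2P_1$ as a subgraph; for those I would instead invoke Lemma~\ref{lem:P4}(2): if some $y_0\in T$ has $h(y_0)\ge 2$ we get an induced $P_b\cup(|T|-3)P_1 = P_b\cup P_1$ with $b\ge 7$, which contains $P_7\cup P_1\supseteq$ several targets, or I push the internal-path argument to build $P_5\cup P_2$, $P_4\cup P_3$, etc. out of the structure of two components and their $T$-attachments. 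When no $y_0$ has $h(y_0)\ge2$, every vertex of $T$ sends edges into at most one odd component of $\bigcup_{k\ge1}\CC_{2k+1}$; combined with~\eqref{eqn1} ($|S|+\sum k|\CC_{2k+1}|\le 3$) this severely limits the configuration, and I would again split on whether the components are triangles (forcing an exceptional $H_i$ or a copy of $R^*$) or larger/sparser (forcing $\tau(G)<1$ by deleting attachment vertices plus low-degree internal vertices).

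\textbf{Main obstacle.} The hard part will be the case $|T|=4$ with all odd components being triangles $K_3$ and no $y_0$ with $h(y_0)\ge 2$: here there is no long induced path inside the components and no immediate toughness contradiction, so I must carefully enumerate how $T$ (four vertices) distributes its edges among the one or two available $\CC_{2k+1}$-components and the possible $\CC_1$/$\CC_0$-components, check in each scenario whether the required induced $R^*$ (which for the larger forbidden forests needs several mutually non-adjacent vertices and a moderately long path) can be assembled, and — where it genuinely cannot — recognize the configuration as one of the exceptional graphs $H_5,\dots,H_{11}$ (for case (2)) and conclude via the ``unless'' clause, or rule it out entirely for cases (3)--(4) where no exceptions are allowed. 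Keeping the induced-ness conditions straight (using Lemma~\ref{lem:biasedbarrier}(1),(3),(4) to guarantee non-edges) while chasing paths across two components and into $T$ is the delicate part; I anticipate this claim's proof to be the longest in this section.
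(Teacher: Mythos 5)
There is a genuine gap: what you have written is a plan, not a proof, and in both cases the plan leans on machinery that is neither needed nor, in one place, available. The claim follows from pure counting, with no forbidden-subgraph extraction at all. Since $t>1$, $G$ is $3$-connected, so $\delta(G)\ge 3$; as $T$ is independent and each $y\in T$ has at most one neighbour in each odd component and none in even components, this gives $|S|+|\bigcup_{k\ge 0}\CC_{2k+1}|\ge 3$. For $|T|=3$ this already finishes: if $\CC_1=\emptyset$ then $|S|+\sum_{k\ge1}k|\CC_{2k+1}|\ge |S|+|\bigcup_{k\ge1}\CC_{2k+1}|\ge 3$, contradicting~\eqref{eqn1}; if $\CC_1\ne\emptyset$ then Lemma~\ref{lem:NoC1s}(1) forces $|S|\ge 2$, and~\eqref{eqn1} together with~\eqref{eqn2} again gives $|T|\ge 4$. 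Your sub-case enumeration here is off (e.g.\ $|S|=1$ with $\CC_1\ne\emptyset$ is already excluded by Lemma~\ref{lem:NoC1s}(1) when $t>1$), and the long path/cutset analysis you anticipate "mirroring Claim~\ref{clm0}" never materialises and is unnecessary. For $|T|=4$: Lemma~\ref{lem:biasedbarrier}(3) kills $\CC_{2k+1}$ for $k\ge 2$; if $|S|\le 1$ then $\CC_1=\emptyset$ and counting the at least $3|T|=12$ edges out of $T$ forces $|\CC_3|\ge 3$, contradicting~\eqref{eqn1}; otherwise~\eqref{eqn1} pins down $|S|=2$ and $|\CC_3|=1$, and removing $S$ together with two attachment vertices of the unique $\CC_3$-component is a $4$-cutset leaving at least $|T|=4$ components, so $t\le 1$, a contradiction.

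The more serious structural problem is your fallback for the "hard case" of $|T|=4$: you propose to "recognize the configuration as one of the exceptional graphs $H_5,\dots,H_{11}$ and conclude via the unless clause." That escape route does not exist for this claim. Claim~\ref{clm1} asserts $|T|\ge 5$ unconditionally (the exceptional cases of Theorem~\ref{thm9}(1) have already been discharged in Claim~\ref{clm0}, and those of parts (2)--(3) are only invoked later, in Claim~\ref{clm5}); moreover every graph among $H_5,\dots,H_{11}$ has $|T|=5$, so none of them can be the outcome of a $|T|=4$ analysis. You would therefore be obliged to rule out every $|T|=4$ configuration outright, which your sketch does not do -- and which, as above, a short toughness/degree count accomplishes without ever touching the linear forest $R$.
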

\pf 
Equation~\eqref{eqn2} implies  $|T|\ge 3$. Assume to the contrary that $|T| \le 4$.
We consider the following two cases. 

\medskip 
{\normalsize \scshape Case 1:} $|T|=3$.  \\

Since $|S|+ |\bigcup_{k\ge 0}\CC_{2k+1}| \ge 3$, we already have a contradiction to Equation~\eqref{eqn1} if $\CC_1=\emptyset$. Thus $\CC_1 \ne \emptyset$, which gives 
$|S|\ge 2$ by Lemma~\ref{lem:NoC1s}(1). However, we again get a contradiction to Equation~\eqref{eqn1} as  $\bigcup_{k\geq 1}\mathcal{C}_{2k+1} \neq \emptyset$ by Equation~\eqref{eqn2}. \\

{\normalsize \scshape Case 2:} $|T|=4$.\\

By Lemma~\ref{lem:biasedbarrier} (3), we know that $\CC_{2k+1} =\emptyset$
for any $k\ge 2$. 
First assume $|S| \le 1$.  Then $\CC_1=\emptyset$ by Lemma~\ref{lem:NoC1s} (1). 
By Lemma~\ref{lem:biasedbarrier}, there are at least $3|T|=12$
edges going from $T$ to vertices in $S$
and components in $\bigcup_{k\geq 1}\mathcal{C}_{2k+1}$. 
As $\CC_{2k+1} =\emptyset$
for any $k\ge 2$, it follows that $|\CC_3|\ge 4$  if $|S|=0$
and $|\CC_3|\ge 3$ if $|S|=1$, contradicting Equation~\eqref{eqn1}. 

Next, assume $|S| \ge 2$. 
By Equations~\eqref{eqn1} and~\eqref{eqn2}, we have $|S|=2$. 
 Let $D$ be the single component in $\mathcal{C}_{3}$.  Define $W_D$ to be a set of $2$ vertices in $D$ which are all adjacent in $G$ to some vertex from $T$. Then $S\cup W_D$ is a cutset in $G$ such that $|S\cup W_D|=4$ and $c(G - (S\cup W_D))\ge |T|=4$,
 contradicting $\tau(G)\ge t>1$. 
\qed
\\

By Claim~\ref{clm1} and Lemma~\ref{lem:P4} (1), we 
see that $G$ contains an induced $R=P_4\cup 3P_1$. Thus we may assume $R\not\in \{P_4\cup P_1, P_3 \cup 2P_1, P_2 \cup 3P_1, P_4\cup 3P_1\}$ 
from this point on.

%

\begin{CLAIM}\label{clm5}
$R\not\in \{ P_3\cup 4P_1, P_2\cup 5P_1, 6P_1, 7P_1\}$ unless  $G$ falls under the exceptional cases as in (a) and (b) of Theorem~\ref{thm9}(2). 
\end{CLAIM}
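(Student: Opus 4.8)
The claim concerns the cases $R \in \{P_3\cup 4P_1, P_2\cup 5P_1, 6P_1, 7P_1\}$, all of which are subgraphs of $P_4\cup 3P_1$ except for the number of isolated vertices, so the situation we face is that $G$ is $R$-free but (by the preceding paragraph) contains an induced $P_4\cup 3P_1$; hence the "extra" isolated vertices cannot be realized, which will force $|T|$ to be small and the odd components to be small and rigid. The plan is to argue by contradiction: assume $G$ falls under none of the exceptional configurations of Theorem~\ref{thm9}(2)(a)--(b), and derive a contradiction. Throughout we work with the biased barrier $(S,T)$ and the inequalities~\eqref{eqn1},~\eqref{eqn2} together with Lemma~\ref{lem:biasedbarrier} and Lemma~\ref{lem:P4}.

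\textbf{Step 1: Bound $|T|$.} First I would show $|T|\le 6$ (and $|T|\le 5$ when $R$ has at most $6$ vertices). Indeed, if $|T|\ge k$ then by Lemma~\ref{lem:P4}(1) $G$ contains an induced $P_4\cup (k-2)P_1$; choosing $k$ one more than the number of vertices of $R$ in its "$P_4 + $ isolated vertices" padding gives an induced copy of $R$. Concretely $R=P_3\cup 4P_1$ is a subgraph of $P_4\cup 4P_1$, so $|T|\ge 6$ is forbidden; similarly handle the other three $R$'s. This pins $|T|$ down to one of a handful of values.

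\textbf{Step 2: Bound $|S|$ and the component structure.} Using $t>1$, so $G$ is $3$-connected and $\delta(G)\ge 3$, I have $|S|+|\bigcup_{k\ge 0}\CC_{2k+1}|\ge 3$; combined with~\eqref{eqn1} this forces $|S|$ and $\sum_{k\ge1}k|\CC_{2k+1}|$ to be small for each fixed $|T|$. As in Claim~\ref{clm1}'s Case analysis, I would split on whether $\CC_1=\emptyset$ (controlled by Lemma~\ref{lem:NoC1s}(1), which gives $|S|+1\ge 2t$ when $\CC_1\ne\emptyset$) and count the $\ge \delta(G)\cdot|T|$ edges leaving $T$ to bound $|\CC_3|$ and $|\CC_1|$ from below, contradicting~\eqref{eqn1} in most subcases. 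The surviving subcases will be exactly those matching $H_5$ (one odd component with $e_G(D,T)$ odd per vertex of $T$, $|S|=2$, $|T|=5$) --- this is where the exceptional-graph clause comes in.

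\textbf{Step 3: Identify the exceptional structure.} In each surviving subcase I would show that the odd components of $G-(S\cup T)$ are forced to be small (using $R$-freeness: two non-adjacent non-neighbors-of-$T$ inside a large component, together with $T$, would form too large an independent-plus-edge configuration, or a shortest path to $N_G(T)$ gives a long induced $P_b$), that $|S|$ is exactly $2$, and that the bipartite adjacency between $T$ and the components together with $G[S]$-edges is exactly the pattern of $H_5$ with $p=5$; here I must also check that any extra edges of $G$ lie in $E_G(S,V(G)\setminus(T\cup S))\cup E(G[S])$, using Lemma~\ref{lem:biasedbarrier}(1)--(4) to rule out edges incident to $T$ or internal to odd components beyond those in $H_5$, and a toughness count ($c(G-(S\cup W))\ge|T|$ for suitable small $W$) to rule out components other than the designated $K_5$. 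The toughness inequalities $\tau(G)\le |S'|/c(G-S')$ will be the workhorse for eliminating "too sparse" component shapes, exactly as in the proof of Claim~\ref{clm0}. For the $R=7P_1$ case one additionally allows edges inside $G[S]$ (reflected in the statement), so the verification there is the cleanest.

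\textbf{Main obstacle.} The delicate part is Step 3: after the counting arguments kill almost everything, one is left with a configuration that is \emph{almost} $H_5$, and one must verify carefully --- separately for each of the four forbidden forests $R$, since they forbid different induced subgraphs --- that no additional edge can hide anywhere except in the two "allowed" edge sets, and that there is genuinely exactly one non-trivial component and it is a $K_5$ (the value $p=5$ coming from $|V(D)|$ being forced by $\delta(G)\ge 3$ and $e_G(D,T)=5$). Managing the bookkeeping of "which edges are forced absent" while staying inside the hypotheses of Lemma~\ref{lem:biasedbarrier} is the crux; the rest is the same style of toughness-count contradiction already used twice above.
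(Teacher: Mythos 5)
Your skeleton (bound $|T|$ via Lemma~\ref{lem:P4}(1), then bound $|S|$ and the odd components via Equation~\eqref{eqn1}, $3$-connectedness and toughness counts, then pin down the surviving configuration) is exactly the paper's approach, and Steps 1 and 2 would go through essentially as you describe: the paper gets $|T|\le 6$, invokes Claim~\ref{clm1} for $|T|\ge 5$, and then splits on $|T|\in\{5,6\}$ and on the number and sizes of the odd components.

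The genuine gap is in your Step 2/Step 3 assertion that ``the surviving subcases will be exactly those matching $H_5$.'' That is false for $R=P_2\cup 5P_1$, and it is precisely why Theorem~\ref{thm9}(2) has a separate clause (b) listing six further exceptional graphs $H_6,\ldots,H_{11}$. Two families of configurations survive all your counting and induced-subgraph arguments besides the $H_5$ one: (i) a single odd component $D$ with $e_G(D,T)=5$ and $|V(D)|=6$ (so one vertex $x\in V(D)$ has no neighbor in $T$); here $(P_2\cup 5P_1)$-freeness forces $x$ to dominate $N_G(T)\cap V(D)$ but does not kill the configuration, and one must run a detailed analysis of $c(D-x)$, cutvertices and $2$-cuts of $D-x$ to show that either a small cutset violates $t>1$ or $G$ contains one of $H_6,\ldots,H_{10}$ as a spanning subgraph; and (ii) $|S|=2$ with two odd components in $\CC_3$, which after forcing both components to be $K_3$ yields $H_{11}$. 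This is by far the longest and most delicate part of the paper's proof of this claim, and your plan has no mechanism to produce a contradiction in these subcases — the toughness ratios there come out to exactly $7/6$ or $1$ rather than strictly below the threshold, so the configurations genuinely exist and must be named as exceptions. Relatedly, for these graphs the allowed ``extra edge'' set is $E_G(S, V(G)\setminus (T\cup S)) \cup E(G[S]) \cup E(G[V(G)\setminus (T\cup S)])$, which is larger than the set you propose to verify against.
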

\begin{proof}
	We may assume that $G$ does not fall under the exceptional cases as in (a) and (b) of Theorem~\ref{thm9}(2). 
	Thus we show that $R\not\in \{ P_3\cup 4P_1, P_2\cup 5P_1, 6P_1, 7P_1\}$.

Assume to the contrary that $R\in \{ P_3\cup 4P_1, P_2\cup 5P_1, 6P_1, 7P_1\}$. 
By Lemma~\ref{lem:P4}(1), $G$ contains an induced $P_4\cup aP_1$, where $a=|T|-2$. 
If $a\ge 5$, then each of $P_3\cup 4P_1, P_2\cup 5P_1, 6P_1$,  and $7P_1$ is 
an induced subgraph of $P_4\cup aP_1$, a contradiction. Thus $a \le 4$
and so $|T| \le 6$. 
 As $|T| \le 6$, we have that $\bigcup_{k>2} \mathcal{C}_{2k+1} = \emptyset$ by Lemma~\ref{lem:biasedbarrier} (3). 
Since $G$ is more than $1$-tough and so is 3-connected, we have $\delta(G) \ge 3$.  
 By Claim \ref{clm1}, $|T|\geq 5$. Thus, we have two cases.

\medskip 
{\normalsize \scshape Case 1:} $|T|=5$.  
 
 As $|T|=5$,  we have $ \mathcal{C}_{2k+1}=\emptyset$ for any $k\ge 3$. 
 We consider two cases regarding whether or not $|\CC_3 \cup \CC_5| \ge 2$. 
 
 {\normalsize \scshape Case 1.1:} $|\CC_3 \cup \CC_5| =1$. 
 
 Let $D\in \CC_{2k+1} \subseteq \CC_3 \cup \CC_5$.  
   By Equation (\ref{eqn1}), $5 \geq |S| + k + 1$, so $|S|\leq 4-k$.  
  If $k=1$, 
 let $W_D$ be a set of $2k$ vertices (which exist by Lemma~\ref{lem:biasedbarrier} (4)) from $D$ which are adjacent in $G$ to vertices from $T$. Then $S \cup W_D$ forms a cutset and we have 
 \begin{align*}
 	t &\leq \frac{|S|+2k}{5} \le \frac{4+k}{5} = \frac{5}{5}=1, 
 \end{align*}
 contradicting $t>1$.  
 Thus we assume $k=2$.  We consider two subcases. 
 
 \medskip 
 {\normalsize \scshape Case 1.1.1:} $|V(D)|\ge 6$. 

 For $R=P_3 \cup 4P_1$, 
 let $x\in V(D)$  such that $e_G(x,T)=0$. 
 Let $P$ be a shortest path in $D$ from $x$
 to a vertex, say $x^*$ from $N_G(T)\cap V(D)$. 
 Let $y^*\in T$ such that $e_G(x^*,y^*)=1$. Then $xPx^*y^*$
 and $T\setminus \{y^*\}$ contain $P_3\cup 4P_1$ as an induced subgraph. 
 We consider next that $R=6P_1$. 
  Then $T$ and the vertex of $D$ that is not adjacent in $G$ to any vertex from $T$
 for an induced $6P_1$, giving a contradiction. 
  For $R=7P_1$, 
  let $W_D$ be the set of $2k+1$ vertices (which exist by Lemma~\ref{lem:biasedbarrier}(4)) from $D$ which are adjacent in $G$ to vertices from $T$. Then $S \cup W_D$ forms a cutset and we have 
 \begin{align*}
 	t &\leq \frac{|S|+2k+1}{|T|+1} \le \frac{4+k+1}{6} = \frac{7}{6},  
 \end{align*}
 giving a contradiction to $t> 7/6$.

 Lastly,   we consider $R=P_2\cup 5P_1$.  
For any $x\in V(D)$ such that $e_G(x,T)=0$, it must be the case that $x$
is adjacent in $G$ to every vertex from $N_G(T)\cap V(D)$.
Otherwise, let $x^*\in N_G(T)\cap V(D)$ such that $xx^*\not\in E(G)$. 
Let $y^*\in T$ such that $e_G(x^*,y^*)=1$. Then $x^*y^*$
and $(T\setminus\{y^*\}) \cup \{x\}$ contain $P_2\cup 5P_1$ as an induced subgraph.
Furthermore, if $|V(D)|-|N_G(T)\cap V(D)| \ge 2$, then $V(D)\setminus (N_G(T)\cap V(D))$
is an independent set in $G$. Otherwise, an edge with both endvertices from $V(D)\setminus (N_G(T)\cap V(D))$
together with $T$ induces $P_2\cup 5P_1$. 
Thus if $|V(D)| \ge 7$, let $W_D$ be the set of $2k+1$ vertices (which exist by Lemma~\ref{lem:biasedbarrier}(4)) from $D$ which are adjacent in $G$ to vertices from $T$. Then $S \cup W_D$ forms a cutset and we have 
\begin{align*}
	t &\leq \frac{|S|+5}{|T|+2} \le \frac{7}{7},  
\end{align*}
giving a contradiction to $t> 1$.  Thus $|V(D)|=6$. 
Let $x\in V(D)$  be the vertex such that $e_G(x,T)=0$.
Then it must be the case that $D-x$ has at most two components. 
Otherwise, we have $t\le \frac{|S\cup \{x\}|}{3}=1$.

Assume first that $c(D-x)=2$.  Let $D_1$ and $D_2$ be the two components of 
$D-x$, and assume further that $|V(D_1)| \le |V(D_2)|$.  Then as $|V(D-x)|=5$, we have two possibilities: 
 either $|V(D_1)|=1$ and $|V(D_2)|=4$ or $|V(D_1)|=2$ and $|V(D_2)|=3$.  Since $\delta(G) \ge 3$, if $|V(D_1)|=1$, then 
 the vertex from $D_1$ must be adjacent in $G$ to at least one vertex from $S$. 
 When 
 $|V(D_2)|=4$ and $D_2 \ne K_4$, then $D_2$ has a cutset $W$ of size 2 such  that $c(D_2-W)=2$. 
 Then $S\cup W\cup \{x\}$ is a cutset of $G$ such that $c(G-(S\cup W\cup \{x\}))=5$, showing that $t\le 1$. 
 Thus $D_2=K_4$.  However, this shows that $G$ contains $H_6$ as a spanning subgraph. 
When  $|V(D_2)|=3$ and $D_2 \ne K_3$, then $D_2$ has a cutvertex $x^*$.
Then $S\cup  \{x, x^*\}$ is a cutset of $G$ such that $c(G-(S\cup \{x, x^*\}))=4$, showing that $t\le \frac{4}{4}=1$. 
 Thus $D_2=K_3$; however, this shows that $G$ contains $H_7$ as a spanning subgraph. 

Assume then that $c(D-x)=1$. Let $D^*=D-x$. If $\delta(D^*) \ge 3$, then $D^*$ is Hamiltonian and so $G$ contains $H_{10}$ as a spanning subgraph. 
Thus we assume $\delta(D^*) \le 2$.  

Assume first that $D^*$
has a cutvertex $x^*$. Then $c(D^*-x) =2$:   
as if $c(D^*-x)  \ge 3$, then $c(G-(S\cup \{x,x^*\})) \ge 4$, implying $t\le 1$. 
Let 
$D^*_1$ and $D_2^*$ be the two components of 
$D^*-x^*$, 
and 
assume further that $|V(D^*_1)| \le |V(D^*_2)|$.  Then as $|V(D^*-x^*)|=4$, we have two possibilities: 
either $|V(D^*_1)|=1$ and $|V(D^*_2)|=3$ or $|V(D^*_1)|=2$ and $|V(D^*_2)|=2$.  Since $\delta(G) \ge 3$, if $|V(D^*_1)|=1$, then 
the vertex from $D^*_1$ must be adjacent in $G$ to at least one vertex from $S$. 
When 
$|V(D^*_2)|=3$ and $D^*_2 \ne K_3$, then $D^*_2$ has a cutvertex $x^{**}$. 
Then $S\cup \{x, x^*, x^{**}\}$ is a cutset of $G$ such that $c(G-(S\cup \{x, x^*, x^{**}\}))=5$, showing that $t\le 1$. 
Thus $D^*_2=K_3$.  The vertex $x^*$ is a cutvertex of $D^*$ and so is adjacent in 
$D^*$ to a vertex of $D_1^*$ and a vertex of $D_2^*$. 
However, this shows that $G$ contains $H_8$ as a spanning subgraph. 
When  $|V(D^*_2)|=2$,  
as  $G$  does not contain  $H_8$ or $H_9$ as a spanning subgraph, $x^*$
is adjacent in $G$ to exactly one vertex, say $x_1^*$,  of $D_1^*$
and to    exactly one vertex, say $x_2^*$,  of $D_2^*$.  
Then $S\cup \{x, x_1^*,x_2^*\}$ is a cutset of $G$ whose removal produces 5 components, 
showing that $\tau(G) \le 1$.

Assume then that $D^*$ is 2-connected. As $\delta(D^*) \le 2$, $D^*$ 
has a minimum cutset $W$ of size 2. If  $c(D^*-W)=3$, then we have $c(G-(S\cup W\cup \{x\}))=5$, showing that $t\le 1$. Thus $c(D^*-W)=2$. Then by analyzing the connection  in $D^*$ between $W$
and the two components of $D^*-W$, we see that $D^*$ contains $C_5$ as a spanning subgraph, 
 showing that $G$ contains $H_{10}$ as a spanning subgraph.

  \medskip 
 {\normalsize \scshape Case 1.1.2:} $|V(D)|=5$. 
 
Since $G$
does not contain $H_5$ as a spanning subgraph, we have $D \ne K_5$.  As $D \ne K_5$,  $D$
 has a cutset $W_D$ of size at most 3  such that each component of $D-W_D$ is a single vertex. 
 Then 
 \begin{align*}
 	t &\leq \frac{|S|+|W_D|}{|T|} \le \frac{4-2+3}{5}=1,   
 \end{align*}
a contradiction. 

{\normalsize \scshape Case 1.2:} $|\CC_3 \cup \CC_5|  \ge 2$.

By Equation (\ref{eqn1}), we have 
\begin{align*}
    4\geq |S| + \sum_{k\geq 1}k|\mathcal{C}_{2k+1}|.
\end{align*}
So one of the following holds:
\begin{enumerate}
    \item $S=\emptyset$ and either $|\mathcal{C}_5|\leq 2$, $|\mathcal{C}_5|\leq 1$ and $|\mathcal{C}_3|\leq 2$, or $|\mathcal{C}_3|\leq 4$. In this case, $\CC_1=\emptyset$ by Lemma~\ref{lem:NoC1s}(1). 
    Thus by Lemma~\ref{lem:biasedbarrier}(3), we have $e_G(T, V(G)\setminus T) \le 12<3|T|=15$. 
    \item $|S|=1$ and either $|\mathcal{C}_5|=1$ and $|\mathcal{C}_3|= 1$ or $|\mathcal{C}_3|\leq 3$. 
    In this case,  again $\CC_1=\emptyset$ by Lemma~\ref{lem:NoC1s}(1). 
    This implies there are a maximum of $14$ edges incident to vertices in $T$, a contradiction.
    \item $|S|=2$ and  $|\mathcal{C}_3|= 2$.  
    
    Let $\CC_3=\{D_1, D_2\}$. Note that $|V(D_i)| \ge 3$
    by Lemma~\ref{lem:biasedbarrier}(4) for each $i\in[1,2]$. 
    Since $|T|=5$, there exists $y_0\in T$
    such that $e_G(y_0, D_i)=1$ for each $i\in[1,2]$.  If $R=P_3\cup 4P_1$,  then $T$ together with the two neighbors of $y_0$ from $V(D_1)\cup V(D_2)$ induce $R$.
    If $R=6P_1$, then $T\setminus\{y_0\}$ together with the two neighbors of $y_0$ from $V(D_1)\cup V(D_2)$ gives an induced $6P_1$. If $R=7P_1$,  
    let $W_{D_i} \subseteq V(D_i)\setminus N_G(y_0)$ be the two vertices of $D_i$ 
    that are adjacent in $G$ to vertices from $T$.  Then $c(G-(S\cup W_{D_1} \cup W_{D_2} \cup \{y_0\}))=|T|-1+2=6$.
    Thus $t\le \frac{2+2+2+1}{6}=\frac{7}{6}$, contradicting $t>\frac{7}{6}$.  
    Lastly, assume $R=P_2\cup 5P_1$. If one of $D_i$ has at least 4 vertices, say $|V(D_2)| \ge 4$, then 
    let $x\in V(D_2)$ such that $e_G(x,T)=0$, $x^*\in V(D_1)$ and $y^*\in T$ such that $e_G(x^*,y^*)=1$.
    Then $x^*y^*$ and $(T\setminus\{y^*\}) \cup \{x\}$ induce $P_2\cup 5P_1$. 
    Thus $|V(D_1)|=|V(D_2)|=3$.  If one of $D_i$, say $D_2 \ne K_3$, then $D_2$
    has a cutvertex $x$. Let $W$ be the set of any two vertices of $D_1$. 
    Then $S\cup W\cup \{x\}$ is a cutset of $G$ such that $c(G-(S\cup W\cup \{x\}))=5$, showing that $t\le \frac{5}{5}=1$.
    Thus $D_1=D_2=K_3$. However, this shows that $G$ contains $H_{11}$ as a spanning subgraph. 
\end{enumerate}

\medskip 
{\normalsize \scshape Case 2:} $|T|=6$. 

In this case, by Lemma~\ref{lem:P4}(1), $G$ has an induced $P_4\cup 4P_1$, which contains each of $P_3\cup 4P_1, P_2\cup 5P_1$
and $6P_1$ as an induced subgraph. 
So we assume $R=7P_1$ in this case and thus $t>\frac{7}{6}$.  

Recall  for $y\in T$, $h(y)=|\{ D: \text{ $D\in \bigcup_{k\ge 1} \CC_{2k+1}$} \quad \text{and} \quad e_G(y,D) \ge 1\}|$.
If there exists $y_0\in T$ such that  $h(y_0) \ge 2$,  we let $x_1, x_2$ be the two neighbors of $y_0$ from the two corresponding components in $\bigcup_{k\geq 1}\mathcal{C}_{2k+1}$, respectively. Then $T\setminus\{y_0\}$ together with  $\{x_1,x_2\}$ induces $7P_1$. Thus $h(y) \le 1$
for each $y\in T$. This, together with $|T|=6$, implies that 
we have either $|\CC_3|\in \{1,2\}$ and $\CC_{2k+1}=\emptyset$ for any $k\ge 2$
or  $|\CC_5|=1$ and $\CC_{2k+1}=\emptyset$ for any $1 \le k \ne 2$. 

If $|\CC_3|=1$ and $\CC_{2k+1}=\emptyset$ for any $k\ge 2$, then $|S| \le 4$ by Equation \eqref{eqn1}.
Let $W$ be a set of two vertices  from the component in $\CC_3$  that are adjacent in $G$ to vertices from $T$. 
Then $c(G-(S\cup W)) \ge 6$, indicating that $t\le \frac{4+2}{6}<\frac{7}{6}$. 
For the other two cases, we have $|S| \le 3$.  If $|\CC_3|=2$ and $\CC_{2k+1}=\emptyset$ for any $k\ge 2$,
let $W$ be a set of four vertices, with two from one  component in $\CC_3$ and the other two from the other component
in $\CC_3$,   which are adjacent in $G$ to vertices from $T$.  If $|\CC_5|=1$ and $\CC_{2k+1}=\emptyset$ for any $1 \le k \le  2$,  let $W$ be a set of four vertices  from the component in $\CC_5$  that are adjacent in $G$ to vertices from $T$.  Then we have $c(G-(S\cup W)) \ge 6$, indicating that $t\le \frac{3+4}{6}=\frac{7}{6}$. 
\end{proof}

By Claim~\ref{clm5}, we now assume that $R \in \{P_7\cup 2P_1, P_5 \cup P_2, P_4 \cup P_3, P_3 \cup 2P_2, 3P_2 \cup P_1 \}$ and $t=3/2$. 
\begin{CLAIM}\label{clm2}
	There exists $y\in T$ with $h(y)>2$.
\end{CLAIM}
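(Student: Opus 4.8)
The plan is to argue by contradiction: assume $h(y)\le 2$ for every $y\in T$, and derive that $G$ contains an induced copy of one of the forbidden linear forests $R\in\{P_7\cup 2P_1,\, P_5\cup P_2,\, P_4\cup P_3,\, P_3\cup 2P_2,\, 3P_2\cup P_1\}$, or that $G$ violates $t$-toughness with $t=3/2$. Since $t=3/2>1$ we already know from the preamble that $|T|\ge 5$ (Claim~\ref{clm1}), that $\bigcup_{k\ge1}\CC_{2k+1}\ne\emptyset$ (Equation~\eqref{eqn2}), and that $|T|\ge |S|+\sum_{k\ge1}k|\CC_{2k+1}|+1$ (Equation~\eqref{eqn1}). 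The key structural consequence of $h(y)\le 2$ for all $y\in T$ is a lower bound on the number of odd components: each $y\in T$ sends at least $3$ edges into $S\cup\bigcup_{k\ge0}\CC_{2k+1}$ (since $\delta(G)\ge 3$), hence $e_G(T,V(G)\setminus T)\ge 3|T|$, and on the other hand $h(y)\le 2$ forces these edges from $y$ to spread over at most two of the odd components in $\bigcup_{k\ge1}\CC_{2k+1}$ (plus possibly $\CC_1$ and $S$). Combining with~\eqref{eqn1}, which bounds $\sum_{k\ge1}k|\CC_{2k+1}|\le |T|-|S|-1$, one should be able to conclude that $|\bigcup_{k\ge1}\CC_{2k+1}|$ is comparatively large — in particular at least $3$ distinct such components — unless $|T|$ is small and $\CC_1$ or $S$ is doing most of the work.

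First I would split on whether $\bigcup_{k\ge1}\CC_{2k+1}$ has at least $3$ components or at most $2$. If there are at least three distinct components $D_1,D_2,D_3\in\bigcup_{k\ge1}\CC_{2k+1}$: using that each $D_i$ has a vertex $x_i$ with $e_G(x_i,T)=1$ (Lemma~\ref{lem:biasedbarrier}(4)), and using Lemma~\ref{lem:P4}(1)'s internal argument, each $D_i$ contains an induced path $x_i P_i x_i^*$ with $e_G(x,T)=0$ for interior vertices and $e_G(x_i,T)=e_G(x_i^*,T)=1$; picking the path to be a single edge when $|V(D_i)|$ allows, or extracting a $P_2$ or $P_3$, together with the remaining $|T|-6\ge 0$ (when $|T|\ge 6$) isolated vertices of $T$ should build $3P_2\cup P_1$ or $P_3\cup 2P_2$ (taking three disjoint edges from three components, plus an isolated $T$-vertex; or a $P_3$ from one component with the edge $y_1x_1^*$ attached). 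The delicate point is handling small components ($K_3$'s) and the case $|T|=5$ where there aren't enough leftover $T$-vertices; there I would lean on toughness, contracting the structure as in Claim~\ref{clm5}: deleting $S$ together with the $\le 2$ attachment vertices of each small component yields $\ge |T|$ components, forcing $t\le 1$, a contradiction.

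If $|\bigcup_{k\ge1}\CC_{2k+1}|\le 2$, then $\sum_{k\ge1}k|\CC_{2k+1}|$ is small, so~\eqref{eqn1} forces $|T|\le |S|+3$, and combined with the edge-count $e_G(T,V(G)\setminus T)\ge 3|T|$ and $h(y)\le 2$ (so each $D\in\bigcup_{k\ge1}\CC_{2k+1}$ receives $\le 2k+1$ edges with $2k+1\le |T|$) one forces $\CC_1\ne\emptyset$ and hence $|S|\ge 2$ by Lemma~\ref{lem:NoC1s}(1), and then a toughness computation: $S$ is a cutset separating the many $\CC_1$-components and the $\ge 5$ $T$-vertices, giving $c(G-S)$ large relative to $|S|$ and contradicting $\tau(G)\ge 3/2$. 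I expect the main obstacle to be the case analysis for small values of $|T|$ (namely $|T|=5$ or $6$) combined with one or two small odd components of order $3$: there the induced-subgraph extraction is tight, and one must carefully exploit that the two attachment vertices of a $K_3$ form a cutset-like structure, or that a non-$K_3$ component of order $3$ has a cutvertex, to push the toughness bound below $3/2$ — essentially repeating, in a lower-$h$ regime, the delicate balancing already seen in Case~1.1.1 of Claim~\ref{clm5}.
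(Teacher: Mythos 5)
Your proposal does not close, and it also attacks the wrong statement. The ``$h(y)>2$'' in the claim is evidently a typo for ``$h(y)\ge 2$'': the sentence immediately following the claim invokes it as ``there exists $y\in T$ such that $h(y)\ge 2$,'' and the paper's proof accordingly negates to ``$h(y)\le 1$ for every $y\in T$.'' You instead assume $h(y)\le 2$ for all $y$, a strictly weaker hypothesis under which the contradiction is genuinely harder to reach, and your sketch does not reach it. In your case $|\bigcup_{k\ge1}\CC_{2k+1}|\ge 3$ you only describe how to extract an induced $3P_2\cup P_1$ or $P_3\cup 2P_2$; but $R$ is one fixed graph from the list of five, and none of $P_7\cup 2P_1$, $P_5\cup P_2$, $P_4\cup P_3$ is an induced subgraph of $3P_2\cup P_1$ or of $P_3\cup 2P_2$, so for those three choices of $R$ your construction yields no contradiction. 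Even for the two graphs you do target, you concede the extraction fails for $K_3$-components when $|T|=5$, and the fallback you offer (``deleting $S$ and the attachment vertices forces $t\le 1$'') is false in general: that cutset has size $|S|+\sum_{k\ge1}2k|\CC_{2k+1}|$ against at least $|T|\ge|S|+\sum_{k\ge1}k|\CC_{2k+1}|+1$ components, which gives $t\le 1$ only when $\sum_{k\ge1}k|\CC_{2k+1}|\le 1$. In your case $|\bigcup_{k\ge1}\CC_{2k+1}|\le 2$ you use Equation~\eqref{eqn1} backwards: it is a \emph{lower} bound $|T|\ge|S|+\sum_{k\ge1}k|\CC_{2k+1}|+1$ and cannot ``force $|T|\le|S|+3$''; moreover, having at most two odd components does not make $\sum_{k\ge1}k|\CC_{2k+1}|$ small (a single member of $\CC_{2k+1}$ contributes $k$), the forcing of $\CC_1\ne\emptyset$ is unjustified, and $S$ alone does not separate the vertices of $T$ from one another, so ``$c(G-S)$ large'' does not follow.

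The paper's proof is a short global count with no case split on the number of components and no induced-subgraph extraction. Assuming $h(y)\le 1$ for all $y\in T$, the set $T_1$ of $T$-vertices meeting some $D\in\bigcup_{k\ge1}\CC_{2k+1}$ satisfies $|T_1|=\sum_{k\ge1}(2k+1)|\CC_{2k+1}|$ exactly, hence $|T|=|T_0|+\sum_{k\ge1}(2k+1)|\CC_{2k+1}|$. Deleting $W=S\cup\bigcup_D W_D$, where $W_D$ is a set of $2k$ attachment vertices of each $D\in\CC_{2k+1}$, leaves at least $|T|\ge 5$ components, so $\frac{3}{2}|T|\le |W|=|S|+\sum_{k\ge1}2k|\CC_{2k+1}|$. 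Substituting the exact expression for $|T|$ gives $|S|\ge\frac{3}{2}|T_0|+\sum_{k\ge1}(k+\frac{3}{2})|\CC_{2k+1}|$ and therefore $|S|+\sum_{k\ge1}k|\CC_{2k+1}|>|T|$, contradicting Equation~\eqref{eqn1}. The exact identity for $|T|$ coming from the hypothesis $h(y)\le 1$ is the ingredient your proposal is missing.
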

\pf Assume to the contrary that for every $y\in T$,   we have $h(y)\le 1$.
Define the following partition of $T$:
\begin{eqnarray*}
T_0 &=& \{y \in T : e_G(y, D) = 0 \text{ for all } D \in  \bigcup_{k\geq 1}\mathcal{C}_{2k+1}\},\\
T_1 &=& \{y \in T : e_G(y, D) = 1 \text{ for some } D \in  \bigcup_{k\geq 1}\mathcal{C}_{2k+1}\}. 
\end{eqnarray*}
Note that $|T_1|=\sum_{k\ge 1}(2k+1)|\mathcal{C}_{2k+1}|$ by Lemma~\ref{lem:biasedbarrier}(3)
and (4).  
For each $D\in \mathcal{C}_{2k+1}$ for some $k\ge 1$, we let $W_D$ be a set of $2k$
vertices that each has in $G$ a neighbor from $T$. As each $D-W_D$ is connected to exactly one 
vertex from $T$ and each component from $\CC_1$ is connected to exactly one 
vertex from $T$, it follows that $$W=S\cup  \bigcup_{D\in \bigcup_{k\geq 1}\mathcal{C}_{2k+1}} W_D$$ satisfies $c(G-W) \ge |T| \ge 5$, where $|T| \ge 5$ is by Claim~\ref{clm1}. 

By the toughness of $G$, we have 
\begin{eqnarray}
|S|+\sum_{k\ge 1}2k|\mathcal{C}_{2k+1}|&=&|W| \ge t|T|=t(|T_0|+|T_1|) \nonumber \\
 &=& t\left(|T_0|+\sum_{k\ge 1}(2k+1)|\mathcal{C}_{2k+1}| \right) \label{eqn3}.
 \end{eqnarray}
 
Since $t=3/2$, the inequality above implies that $|S|\ge 3|T_0|/2+\sum_{k\ge 1}(k+3/2)|\mathcal{C}_{2k+1}|$. Thus 
 \begin{eqnarray*}
 |S|+\sum_{k\ge 1}k|\mathcal{C}_{2k+1}| &\ge & 3|T_0|/2+\sum_{k\ge 1}(2k+3/2)|\mathcal{C}_{2k+1}|>|T_0|+\sum_{k\ge 1}(2k+1)|\mathcal{C}_{2k+1}|=|T|,
 \end{eqnarray*}
contradicting Equation~\eqref{eqn1}. 
\qed 

By Claim~\ref{clm2}, there exists $y\in T$ such that $h(y) \ge 2$. 
Then as $|T|\ge 5$, by Lemma~\ref{lem:P4}(2), 
$G$ contains an induced $P_7\cup 2P_1$. Thus 
we assume that $R \ne P_7\cup 2P_1$. 
We assume first that  $|\bigcup_{k\ge 1}\CC_{2k+1}| 
\geq 3$ and let $D_1, D_2, D_3$ be three distinct odd components from $\bigcup_{k\ge 1}\CC_{2k+1}$. 
Let $y_0\in T$ such that $h(y_0 )\ge 2$. We assume, without loss of generality, that $e_G(y_0,D_1)=e_G(y_0,D_2)=1$. 
By Lemma~\ref{lem:P4}(2), $G$ contains  an induced $P_b \cup aP_1$, where $b\ge 7$ and $a=|T|-3$, and the graph  $P_b \cup aP_1$ can be chosen such that the vertices in $aP_1$
are from $T$ and the path $P_b$ has the form $y_1x_1^*P_1x_1y_0x_2P_2x_2^*y_2$, where 
$y_0,y_1,y_2\in T$ and $x_1^*P_1x_1$ and $x_2^*P_2x_2$ are respectively contained in $D_1$ and $D_2$ such that $e_G(x,T)=0$
for every internal vertex $x$ from $P_1$ and $P_2$.  If one of $y_1$ and $y_2$, say $y_1$ has a neighbor $z_1$
from $V(D_3)$, 
then $z_1y_1x_1^*P_1x_1y_0x_2P_2x_2^*y_2$ and $T\setminus \{y_0, y_1, y_2\}$ induce 
$P_8\cup 2P_1$,  which contains each of $P_5 \cup P_2$,  $P_4 \cup P_3$,   and $3P_2 \cup P_1$
 as an induced subgraph. 
 Let $z_2 \in V(D_3)$ be a neighbor of $z_1$. Then $z_2z_1y_1x_1^*P_1x_1y_0x_2P_2x_2^*y_2$ 
 contains an induced $P_3\cup 2P_2$ whether $e_G(z_2, \{y_0, y_2\}) =0$ or $1$. 
 Thus we assume $e_G(y_i, D_3)=0$ for each $i\in [1,2]$ and so we can find $y_3\in T\setminus \{y_0, y_1,y_2\}$
 and $z\in V(D_3)$ such that $y_3z\in E(G)$. Then $y_1x_1^*P_1x_1y_0x_2P_2x_2^*y_2$ 
 and $zy_3$ contains an induced $P_7\cup P_2$, which contains each of $P_5 \cup P_2$, $P_3 \cup 2P_2$ and  $3P_2 \cup P_1$
 as an induced subgraph. 
 We are only left to consider $R=P_4\cup P_3$.  As $e_G(y_i, D_3)=0$ for each $i\in [1,2]$, 
 we can find distinct $y_3, y_4\in T\setminus\{y_0, y_1, y_2\}$ 
 and distinct $z_1, z_2\in V(D_3)$ such that $y_3z_1, y_4z_2\in E(G)$. 
 We let $P$ be a shortest path in $D_3$ connecting $z_1$ and $z_2$.  
 If $e_G(y_0, V(P))=0$, then $y_3z_1Pz_2y_4$ and $y_1x_1^*P_1x_1y_0x_2P_2x_2^*y_2$ 
 contains an induced $P_4 \cup P_3$. Thus $e_G(y_0, V(P))=1$. This in particular, implies that $|V(P)| \ge 3$. 
 Then $y_3z_1Pz_2y_4$ and $y_1x_1^*P_1x_1$ together contain an induced $P_4 \cup P_3$.

Thus $|\cup_{k\geq 1}\mathcal{C}_{2k+1}|= 2$. 
Let $D_1, D_2 \in \bigcup_{k\geq 1}\mathcal{C}_{2k+1}$ be the two components. Define the following partition of $T$:
\begin{eqnarray*}
T_0 &=& \{y \in T : e_G(y, D_1) =e_G(y,D_2)=0\},\\
T_{11} &=& \{y \in T : e_G(y, D_1) = 1 \text{ and } e_G(y, D_2) = 0\},\\
T_{12} &=& \{y \in T : e_G(y, D_1) = 0 \text{ and } e_G(y, D_2) = 1\},\\
T_2 &=& \{y \in T : e_G(y, D_1) =e_G(y, D_2) = 1\}. 
\end{eqnarray*}

We have either $T_2 = \emptyset$ or $T_2 \neq \emptyset$.
First suppose $T_2 = \emptyset$. Define the following vertex sets:
$$W_1 = N_G(T_{11}) \cap V(D_1) \quad \text{and} \quad 
W_2 = N_G(T_{12}) \cap V(D_2).$$
Then $|W_1| = |T_{11}| = 2k_1+1$ and $|W_2| = |T_{12}| = 2k_2+1$, where we assume $e_G(T, D_1)=2k_1+1$ and  $e_G(T, D_2)=2k_2+1$ for some integers $k_1$ and $k_2$. Then $W = S \cup W_1 \cup W_2$ is a cutset of $G$ with $c(G-W) \geq |T|$. By toughness, $|W| \geq \frac{3}{2}|T| = |T| + \frac{1}{2}|T|$. Since $|T| = |T_0| + |T_{11}| + |T_{12}|$, this gives us 
\begin{align*}
    |W| \ & \geq |T| + \frac{1}{2}|T_0| + \frac{1}{2}(|T_{11}| + |T_{12}|) \\
    & = |T| + \frac{1}{2}|T_0| + \frac{1}{2}(2k_1 + 1 + 2k_2+1) \\
     & = |T| + \frac{1}{2}|T_0| + k_1 + k_2 +1.
\end{align*}
Thus  $|W|=|S|+|W_1|+|W_2|=|S| + 2k_1 + 2k_2 + 2 \geq |T| + \frac{1}{2}|T_0| + k_1 + k_2 +1$, which implies $|S| + k_1 + k_2 +1 \geq |T| + \frac{1}{2}|T_0|$. Hence, by Equation (\ref{eqn1}), we have $|T| \geq |T| + \frac{1}{2}|T_0|$, giving  a contradiction.

So we may assume $T_2 \neq \emptyset$. 
Now define the following vertex sets:
$$W_1 = N_G(T_{11}) \cap V(D_1), \quad  W_2 = N_G(T_{12}) \cap V(D_2), \quad \text{and} \quad 
W_3 = N(T_2) \cap (V(D_1) \cup V(D_2)).$$

We have that $|W_1| = |T_{11}|$, $|W_2| = |T_{12}|$, and $|W_3| = 2|T_2|$. Now let $W = S \cup W_1 \cup W_2 \cup W_3$. Then $W$ is a cutset  of $G$ with $c(G-W) \geq |T_0|+|T_{11}|+|T_{12}|+1$ since $T_2 \neq \emptyset$. By toughness, $|W| \geq \frac{3}{2}(|T_0|+|T_{11}|+|T_{12}|+1)$. Since $|W| = |S| + |W_1| + |W_2|+ |W_3| = |S| + |T_{11}| +|T_{12}| +2|T_2|$, we have $|S| + |T_{11}| +|T_{12}| +2|T_2| \geq \frac{3}{2}|T_0|+ \frac{3}{2}|T_{11}|+ \frac{3}{2}|T_{12}|+\frac{3}{2}$. This implies
$$|S| \geq \frac{3}{2}|T_0| + \frac{1}{2}|T_{11}| + \frac{1}{2}|T_{12}| +1.$$
Thus, 
\begin{equation} \label{eq:sizeofSwithks}
    |S| +k_1 +k_2 \geq \frac{3}{2}|T_0| + \frac{1}{2}|T_{11}| + \frac{1}{2}|T_{12}| +1 +k_1 +k_2.  
\end{equation}

We have that either $T_{11}\cup T_{12}\cup T_0 = \emptyset$ or $T_{11}\cup T_{12} \cup T_0 \neq \emptyset$.
First suppose $T_{11} \cup T_{12} \cup T_0 = \emptyset$. Then $|T| = |T_2| = \frac{1}{2}(2k_1+1+2k_2+1) = k_1+k_2+1$.  Thus $|S|+k_1+k_2 \ge |T|$, 
showing a contradiction to Equation~(\ref{eqn1}). 


So we may assume $T_{11} \cup T_{12} \cup T_0 \neq \emptyset$. Then 
\begin{align*}
    |T| & =  |T_0| + (2k_1 +1 +2k_2 + 1 - |T_2|)\\
    &= |T_0| + (2k_1 + 2k_2 +2) - \frac{1}{2}(2k_1 +1 + 2k_2 + 1 - |T_{11}| - |T_{12}|) \\
    &= |T_0| + \frac{1}{2}(2k_1 + 2k_2 +2) + \frac{1}{2}|T_{11}| + \frac{1}{2}|T_{12}| \\
    &= |T_0| + k_1 + k_2 + 1 + \frac{1}{2}|T_{11}| + \frac{1}{2}|T_{12}|.  
\end{align*}
Using the size of $T$ and~\eqref{eq:sizeofSwithks}, we get $|S| +k_1 + k_2 \geq |T|$, showing a contradiction to Equation~(\ref{eqn1}). 

The proof of Theorem~\ref{thm9} is now finished. 
\qed

\section{Proof of Theorems~\ref{theorem4a} and~\ref{theorem4}}

Recall that for a graph $G$, $\alpha(G)$, the independence number of $G$, is the size of a largest independent set in $G$. 
\proof[Proof of Theorem~\ref{theorem4a}]

For each $i\in [0,11]$, $H_i$ does not contain a 2-factor by Theorem~\ref{tutte's theorem}. 
Thus to finish proving Theorem~\ref{thm9}, 
we are only left to show the three claims below. 

\begin{CLAIM}
	The graph $H_i$ is $(P_2\cup 3P_1)$-free, $H_1$ is  $(P_3\cup 2P_1)$-free,
	and $\tau(H_i)=1$ for each $i\in [0,4]$.  	
\end{CLAIM}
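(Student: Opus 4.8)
The plan is to begin by reading off from Figure~\ref{f1} the precise structure of each $H_i$. In $H_0$, the blobs $D_1$ and $D_2$ are triangles, $T=\{t_1,t_2,t_3\}$ is independent, and $t_j$ is adjacent to exactly one vertex of $D_1$ and one vertex of $D_2$. In $H_1,\ldots,H_4$ we have $S=\{x\}$, $T=\{t_1,t_2,t_3\}$ independent, $x$ adjacent to all of $T$, and a blob $D$ equal to $K_3$ (for $H_1$), $K_4$ (for $H_2$), the diamond $K_4-e$ (for $H_3$), and the paw (for $H_4$); moreover every $t_j$ has exactly one neighbour in $D$, and for $H_2,H_3,H_4$ exactly one vertex of $D$ has no neighbour in $T$. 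From these descriptions I will record the elementary facts used repeatedly: each $H_i$ is $2$-connected; the only vertices of degree $2$ are the $t_j$'s (together with the pendant vertex of the paw in $H_4$), and distinct $t_j$'s have distinct neighbourhoods; and $\alpha(H_1)=3$, while $\alpha(H_i)=4$ for $i\in\{0,2,3,4\}$, exhibited by an explicit four-element independent set (for instance $\{t_1,t_2,t_3,v_4\}$ when $v_4$ is the vertex of $D$ with no neighbour in $T$).

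For the forbidden-subgraph claims I will use that $\alpha(P_2\cup 3P_1)=4$: since $\alpha(H_1)=3$, the graph $H_1$ is automatically $(P_2\cup 3P_1)$-free, and $(P_3\cup 2P_1)$-freeness of $H_1$ follows from a short inspection of the few induced copies of $P_3$ in $H_1$, each of whose closed neighbourhoods meets all but at most one vertex of $H_1$, leaving no room for the two extra isolated vertices. For $i\in\{0,2,3,4\}$ I will show $H_i$ is $(P_2\cup 3P_1)$-free by verifying, for every edge $uv$, that $\alpha\bigl(H_i-N_{H_i}[\{u,v\}]\bigr)\le 2$; the edges fall into a handful of types (inside a blob, between $T$ and a blob, and between $x$ and $T$), and in each case the closed neighbourhood $N_{H_i}[\{u,v\}]$ absorbs enough of the relevant blob that what remains is a graph on at most four vertices whose independence number is easily checked to be at most $2$.

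For $\tau(H_i)=1$, the bound $\tau(H_i)\le 1$ is witnessed uniformly by $S=N_{H_i}(t_1)$: this set isolates $t_1$ and, as one checks directly, leaves a connected remainder, so $c(H_i-S)=2=|S|$. For $\tau(H_i)\ge 1$ I will use that choosing one vertex from each component of $H_i-S$ produces an independent set, so $c(H_i-S)\le\alpha(H_i)$; hence it suffices to treat $|S|\le 2$ when $i=1$ and $|S|\le 3$ when $i\in\{0,2,3,4\}$, the case $|S|\le 1$ being immediate from $2$-connectedness. For $|S|=2$: an isolated vertex of $H_i-S$ has degree $\le 2$ with neighbourhood exactly $S$, and since the $t_j$'s have pairwise distinct neighbourhoods there is at most one such vertex; if there is one then $S=N_{H_i}(t_j)$ and the explicit check above gives $c\le 2$, while if there is none then every component has at least two vertices and the triangle- or $K_4$-like structure of the blob(s) forces $c\le 2$. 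For $|S|=3$: I will check that no three pairwise non-adjacent vertices of $H_i$ have the union of their neighbourhoods of size at most $3$, so $H_i-S$ has at most two isolated vertices and $c\le 3$. The main obstacle is not conceptual but organisational: the one genuinely delicate point is the ``no isolated vertex'' sub-case of $|S|=2$ for $H_2$ and $H_3$, where $S$ may contain a vertex of degree $4$ so a crude edge count is insufficient, and one must instead argue from the clique structure of $D$ (removing any two vertices leaves a large clique that keeps the $t_j$'s tied together).
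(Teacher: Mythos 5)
Your overall strategy coincides with the paper's: both establish $(P_2\cup 3P_1)$-freeness by running over the few edge types and bounding the independence number of what remains after deleting the closed neighbourhood of the edge, both handle $(P_3\cup 2P_1)$-freeness of $H_1$ by inspecting the two types of induced $P_3$, and both prove $\tau(H_i)\ge 1$ by combining $2$-connectedness, the bound $c(H_i-W)\le\alpha(H_i)$ (which is $3$ for $H_1$ and $4$ otherwise) to dispose of $|W|\ge 4$, and a finite case analysis for $|W|\in\{2,3\}$. Your observation that $\alpha(H_1)=3<4=\alpha(P_2\cup 3P_1)$ gives the $(P_2\cup 3P_1)$-freeness of $H_1$ for free is a small simplification over the paper, which treats $H_1$ by the same edge-by-edge check as the other graphs.

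There is, however, one step that fails as written. In the case $|W|=3$ you argue that since no three pairwise non-adjacent vertices have the union of their neighbourhoods of size at most $3$, the graph $H_i-W$ has at most two singleton components and hence $c(H_i-W)\le 3$. That last inference needs $|V(H_i)|-3\le 5$, which holds for $H_1,\dots,H_4$ but not for $H_0$, which has $9$ vertices: $H_0-W$ then has $6$ vertices, and two singletons together with two components of size $2$ would still give $c(H_0-W)=4>|W|$, so your criterion does not rule out this configuration. The repair is easy: in $H_0$ every vertex outside $T$ has degree $3$, and distinct $t_j$'s have disjoint neighbourhoods of size $2$, so no \emph{two} non-adjacent vertices of $H_0$ have neighbourhood-union of size at most $3$; hence $H_0-W$ has at most one singleton component and $c(H_0-W)\le 1+\lfloor 5/2\rfloor=3$. (Alternatively, do what the paper does and enumerate the size-$3$ cutsets meeting both triangles.) With that patch, and the routine verifications you defer (the no-singleton subcase of $|W|=2$, and the edge-type checks for $H_2,H_3,H_4$), the argument closes.
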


\pf  We first show that  $H_i$ is $(P_2\cup 3P_1)$-free 
 for each $i\in [0,4]$. 
We only show this for $H_0$, as  the proofs for $H_i$ for $i \in [1,4]$ are similar.
In $H_0$, there are two types of edges $xy$: $ x, y \in V(D_j) \text{ or } x\in V(D_j) \text{ and } y \in V(T)$, where $j\in [1,2]$. Without loss  of generality first consider the edge $v_1v_2\in E(D_1)$ and the subgraph $F_1=H_0 - (N_{H_0}[v_1] \cup N_{H_0}[v_2])$. We see  $\alpha(F_1)=2$.  Now, without loss of generality, consider the edge $v_1t_1$ and the subgraph $F_2=H_0 - (N_{H_0}[v_1] \cup N_{H_0}[t_1])$. We see $\alpha(F_2)=2$. In either case, $P_2 \cup 3P_1$ cannot exist as an induced subgraph in $H_0$. Thus $H_0$ is $(P_2 \cup 3P_1)$-free.

Then we show that $H_1$ is $(P_3\cup 2P_1)$-free. Two types of induced paths  $abc$ of length 3 exist: $ a\in S, b \in T, c \in V(D) \text{ or } a \in T, b,c \in V(D)$. Without loss of generality, consider the path $xt_1v_1$ and the subgraph $F_1 = H_1 - (N_{H_1}[x] \cup N_{H_1}[t_1] \cup N_{H_1}[v_1])$. We see that $F_1$ is a null graph. Now, without loss of generality, consider the path $t_1v_1v_2$ and the subgraph $F_2 = H_1 -  (N_{H_1}[t_1] \cup N_{H_1}[v_1] \cup N_{H_1}[v_2])$. We see $|V(F_2)|=1$. In either case, $P_3 \cup 2P_1$ cannot exist as an induced subgraph in $H_1$. Thus $H_1$ is $(P_3 \cup 2P_1)$-free.

Let $i\in [0,4]$. As $\delta(H_i)=2$, $\tau(H_i) \le 1$. 
It suffices to show $\tau(H_i) \ge 1$. Since $H_i$ is 2-connected,  we show 
that $c(H_i-W) \le |W|$ for any  $W\subseteq V(H_i)$ such that $|W| \ge 2$.  If $|W|=2$, by considering all the possible formations 
of $W$,  
we have $c(H_i-W) \le |W|$. Thus we assume $|W| \ge 3$.

Assume by contradiction that there exists $W\subseteq V(H_i)$ with $|W| \ge 3$ and $c(H_i-W) \ge  |W|+1\ge 4$. 
The size of a largest independent set of each $H_0$, $H_2$, $H_3$, and $H_4$ is 4, and of $H_1$ is 3. Since $c(H_i-W)$ is bounded above by the 
size of  a largest independent set of $H_i$, 
we already obtain a contradiction if $i=1$ or $|W| \ge 4$. So we assume $i\in \{0,2,3,4\}$ and $|W|=3$. 

As $c(H_i-W) \ge 4$, for the graph $H_0$, 
we must have $\{v_1,v_2, v_3\} \cap W \ne \emptyset$ 
and $\{v_4,v_5, v_6\} \cap W \ne \emptyset$. 
As $|W|=3$, we have either $W\cap T=\emptyset$
or $|W\cap T|=1$. In either case, by checking all the possible formations of $W$,  we get $c(H_0-W) \le 2$, 
contradicting the choice of $W$.

As $c(H_i-W) \ge 4$, for each $i\in [2,4]$, we must have $x\in W$. 
Thus $t_j\not\in W$ for $j\in [1,3]$, as otherwise, $c(H_i-(W\setminus \{t_j\})) \ge 4$, contradicting the argument previously that $c(H_i-W^*) \le 2$
for any $W^*\subseteq V(H_i)$ and $|W^*| \le 2$. 
As $|W|=3$, we then have $|W\cap \{v_1,v_2,v_3, v_4\}|=2$. 
However, $c(H_i-W)\le 3$ for $W=\{x, v_k,v_\ell\}$
for all distinct $k,\ell\in [1,4]$.  We again get a contradiction to the  choice of $W$.
\qed

\begin{CLAIM}
	The graph $H_5$ with $p=5$ is $(P_3\cup 4P_1)$-free, $(P_2\cup 5P_1)$-free, and $6P_1$-free with $\tau(H_5)=\frac{6}{5}$.  	
\end{CLAIM}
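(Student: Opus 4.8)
The plan is to settle the three forbidden-subgraph claims using a single observation about independent sets, and then to compute the toughness by a short case analysis. Throughout, fix $p=5$, so that $D$ is the complete graph on $\{y_1,\dots,y_5\}$ and $|V(H_5)|=12$.

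First I would record that $\alpha(H_5)=5$ and, more importantly, that every independent set of size $5$ in $H_5$ is a dominating set: the only size-$5$ independent sets are $T$ itself and the sets $(T\setminus\{t_i\})\cup\{y_i\}$ for $i\in[1,5]$, and for each such $I$ one checks directly that $N_{H_5}[I]=V(H_5)$, using that each $x_j$ is adjacent to all of $T$ and that $y_i$ is adjacent to $t_i$ and to all other $y_j$. From this, $6P_1$-freeness is immediate since $\alpha(H_5)=5<6$. For $(P_2\cup 5P_1)$-freeness, an induced copy would consist of an edge $uv$ together with a size-$5$ independent set $I$ with $I\cap N_{H_5}[\{u,v\}]=\emptyset$; but then $u\notin N_{H_5}[I]=V(H_5)$, a contradiction. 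For $(P_3\cup 4P_1)$-freeness I would enumerate the induced copies of $P_3$ by the location of the centre vertex. If the centre is some $x_j$, the two ends lie in $T$ and $H_5-N_{H_5}[V(P_3)]$ is a sub-clique of $D$, hence has independence number $1$. If the centre is some $t_i$, the two ends form one of $\{x_1,x_2\}$, $\{x_1,y_i\}$, $\{x_2,y_i\}$, and in each case $H_5-N_{H_5}[V(P_3)]$ is empty or a sub-clique of $D$. If the centre is some $y_i$, the copy must be $t_i\,y_i\,y_j$ with $j\neq i$, and then $H_5-N_{H_5}[V(P_3)]=T\setminus\{t_i,t_j\}$ has size $3$. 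In every case the surviving independence number is at most $3$, so $H_5$ contains no induced $P_3\cup 4P_1$.

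For $\tau(H_5)=\tfrac{6}{5}$, the upper bound is witnessed by $W=\{x_1,x_2,y_1,y_2,y_3,y_4\}$: here $H_5-W$ is the disjoint union of the four vertices $t_1,\dots,t_4$ and the edge $t_5y_5$, so $c(H_5-W)=5$ and $|W|/c(H_5-W)=\tfrac{6}{5}$. For the lower bound I would show $5|W|\ge 6\,c(H_5-W)$ for every cutset $W$, splitting on $|W\cap S|$. If at least one of $x_1,x_2$ avoids $W$, then, since each of $x_1,x_2$ is adjacent to all of $T$, the set $(S\setminus W)\cup(T\setminus W)$ spans at most two components of $H_5-W$ and $D\setminus W$ contributes at most one more, so $c(H_5-W)\le 3$; furthermore one shows $|W|\ge 5$ in every such configuration (if $T\subseteq W$ this is clear, and otherwise a second component forces $D\setminus W$ to be split off, which requires $|W\cap T|+|W\cap D|\ge 5$), so $5|W|\ge 25>18\ge 6\,c(H_5-W)$. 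If $x_1,x_2\in W$, then a surviving $t_i$ is isolated when $y_i\in W$ and otherwise lies in the single component containing $D\setminus W$; letting $I$ be the set of indices of isolated $t_i$'s, we have $c(H_5-W)=|I|+[\,D\setminus W\neq\emptyset\,]$ with $|I|\le|W\cap D|$, while $|W|=2+|W\cap T|+|W\cap D|$. Substituting, and using $|W\cap D|\le 4$ when $D\setminus W\neq\emptyset$ and $|W\cap D|=5$ otherwise, yields $5|W|\ge 6\,c(H_5-W)$, with equality exactly at the cutset exhibited above. Hence $\tau(H_5)=\tfrac{6}{5}$.

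The only step requiring real care is the toughness lower bound: one must keep the case split on $|W\cap S|$ exhaustive and, in the branches where $x_1$ or $x_2$ survives, correctly account for when $D\setminus W$ splits off as its own component, since there the bound $c(H_5-W)\le 3$ is far from the extremal ratio. Everything else reduces to a finite inspection of the five maximum independent sets and of the handful of induced copies of $P_3$.
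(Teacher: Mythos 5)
Your proposal is correct and follows essentially the same route as the paper: a finite verification that deletes the closed neighborhood of each candidate $P_3$/edge and bounds the independence number of what remains, followed by a case analysis on $|W\cap S|$ for the toughness. The only (minor) variations are your observation that every maximum independent set of $H_5$ is dominating, which packages the $(P_2\cup 5P_1)$- and $6P_1$-freeness checks more cleanly, and your direct inequality $5|W|\ge 6\,c(H_5-W)$ over all cutsets in place of the paper's reduction to toughsets.
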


\pf  Let $p=5$ and $D$ be the odd component of $H_5-(S\cup T)$. Note that $D=K_p=K_5$.  

We first show that $H_5$ is $(P_3 \cup 4P_1)$-free. 
There are three types of induced paths $xyz$ of length 3 in $H_5:  x \in S, y \in T, z \in V(D) \text{ or } x \in T, y,z \in V(D) \text{ or } x,z \in T, y \in S$. Without loss of generality, consider the path $x_1t_1y_1$ and the subgraph $F_1 = H_5 - (N_{H_5}[x_1] \cup N_{H_5}[t_1] \cup N_{H_5}[y_1])$. We see that $F_1$ is a null graph. Now consider the path $t_1y_1y_2$ and the subgraph $F_2 = H_5 -(N_{H_5}[t_1] \cup N_{H_5}[y_1] \cup N_{H_5}[y_2])$. We see $\alpha(F_2) = 3$. Finally consider the path $t_1x_1t_2$ and the subgraph $F_3 = H_5 - (N_{H_5}[t_1] \cup N_{H_5}[x_1] \cup N_{H_5}[t_2])$. We see $\alpha(F_3) = 3$. In any case, an induced copy of $P_3 \cup 4P_1$ cannot exist in $H_5$. Thus $H_5$ is $(P_3 \cup 4P_1)$-free.

We then show that $H_5$ is $(P_2 \cup 5P_1)$-free. 
There are three types of edges $xy$ in $H_5:  x \in S, y \in T \text{ or } x \in T, y \in V(D) \text{ or } x,y \in V(D)$. Without loss of generality, consider the edge $x_1t_1$ and the subgraph $F_1 = H_5 - (N_{H_5}[x_1] \cup N_{H_5}[t_1])$. We see $|V(F_1)| = 4$. Now consider the edge $t_1y_1$ and the subgraph $F_2 = H_5 - (N_{H_5}[t_1] \cup N_{H_5}[y_1])$. We see $|V(F_2)| = 4$. Finally, consider the edge $y_1y_2$ and the subgraph $F_3 = H_5 - (N_G{H_5}[y_1] \cup N_{H_5}[y_2])$. We see $\alpha(F_3) = 3$. In any case, no induced copy of $P_2 \cup 5P_1$ can exist in $H_5$. Thus $H_5$ is $(P_2 \cup 5P_1)$-free.

We lastly show that $H_5$ is $6P_1$-free.
There are three types of vertices  $x$ in $H_5:  x \in S, x \in T$,  or $x \in V(D)$. Without loss of generality, consider the vertex $x_1$ and the subgraph $F_1 = H_5 - N_{H_5}[x_1]$. We see $\alpha(F_1) =1$. Now consider the vertex $t_1$ and the subgraph $F_2 = H_5 -  N_{H_5}[t_1]$. We see $\alpha(F_2) =4$. Finally, consider the vertex $y_1$ and the subgraph $F_3 = H_5 -  N_{H_5}[y_1]$. We see $\alpha(F_3) =4$. In any case, no induced copy of $6P_1$ can exist in $H_5$. Thus $H_5$ is $6P_1$-free.

We now show that $\tau(H_5)=\frac{6}{5}$.
Let $W$ be a toughset of $H_5$. 
Then $S\subseteq W$. Otherwise, by the structure of $H_5$, we have $c(H_5-W) \le 3$ and $|W| \ge 5$. 
As  $S\subseteq W$ and the only neighbor of each vertex of $T$
in $H_5-S$ is contained in a clique of $H_5$, we have 
$T\cap W=\emptyset$.  Since $c(H_5-W) \ge 2$, it follows that 
$W\cap V(D) \ne \emptyset$. 
Then $c(H_5-W)=|W\cap V(D)|$
if $|W\cap V(D)| \le 3$ or $|W\cap V(D)| =5$,  and 
$c(H_5-W)=|W\cap V(D)|+1$ if $|W\cap V(D)| =4$. 
The smallest ratio of $\frac{|W|}{c(H_5-W)}$ is $\frac{6}{5}$,
which happens when $|W\cap V(D)| =4$. 
\qed 

\begin{CLAIM}
	The graph $H_i$ is $(P_2\cup 5P_1)$-free with $\tau(H_i)=\frac{7}{6}$ for each $i\in [6,11]$.   	
\end{CLAIM}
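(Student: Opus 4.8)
The plan is to prove the two assertions of the claim separately for each $i\in[6,11]$: that $H_i$ is $(P_2\cup 5P_1)$-free, and that $\tau(H_i)=\frac{7}{6}$, following the pattern of the two preceding claims.

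\emph{$(P_2\cup 5P_1)$-freeness.} An induced $P_2\cup 5P_1$ consists of an edge $uv$ together with an independent set of five vertices, none of which lies in $N_{H_i}[u]\cup N_{H_i}[v]$, so it suffices to verify that $\alpha\bigl(H_i-(N_{H_i}[u]\cup N_{H_i}[v])\bigr)\le 4$ for every edge $uv$ of $H_i$. First I would sort the edges of $H_i$ into types according to where their endpoints lie: edges between $S$ and $T$; edges between $S$ and the component(s) $D$ (present only in $H_6$ and $H_8$); edges between $T$ and $D$, which split into subtypes $t_jv_j$ depending on whether $v_j$ is a ``bridge'' vertex; and edges inside $D$, namely those incident to $v_0$ and those lying in a triangle. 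Using the symmetry of each construction I would then pick one representative $uv$ of each type, delete $N_{H_i}[u]\cup N_{H_i}[v]$, and read off the independence number of the small leftover graph; in every case it is at most $4$ (the bound is attained for some edges $t_jv_j$, so the computation really has to be carried out, but each instance is immediate).

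\emph{Toughness.} For the upper bound $\tau(H_i)\le\frac{7}{6}$ I would exhibit an explicit cut realizing the ratio $\frac{7}{6}$: for $i\in[6,10]$ the set $W=S\cup\{v_1,\dots,v_5\}$ works, since $H_i-W$ has exactly the six components $\{t_1\},\dots,\{t_5\},\{v_0\}$; for $H_{11}$ the set $W=S\cup\{v_1,v_2,v_5,v_6,t_3\}$ works, since $H_{11}-W$ consists of the six singletons $\{t_1\},\{t_2\},\{t_4\},\{t_5\},\{v_3\},\{v_4\}$. For the lower bound I must show $6|W|\ge 7\,c(H_i-W)$ for every cutset $W$; note first that $\alpha(H_i)=6$, hence $c(H_i-W)\le 6$ always, and that $\kappa(H_i)\ge 3$ (removing any two vertices leaves $H_i$ connected), which disposes of the case $c(H_i-W)=2$. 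The engine of the remaining argument is the observation that if some $x\in S\setminus W$ survives, then $x$ together with all of $T\setminus W$ lies in one component; combined with the fact that $D$ minus any vertex set has at most two components and is connected whenever $v_0$ survives (and each triangle $D_j$ of $H_{11}$ is connected or empty after any deletion), this forces $c(H_i-W)\le 3$ once a $T$-vertex also survives, and $c(H_i-W)\le 4$ with $|W|\ge 5$ when $T\subseteq W$; in all such cases the ratio already exceeds $\frac{7}{6}$. This reduces the problem to $S\subseteq W$, where for $i\in[6,10]$ any $t_j\in W$ is superfluous, so one may take $W=S\cup W'$ with $W'\subseteq V(D)$; then $c(H_i-W)=|W'\cap\{v_1,\dots,v_5\}|+c(D-W')$, and $6|W|\ge 7\,c(H_i-W)$ becomes a short finite check in $p:=|W'\cap\{v_1,\dots,v_5\}|$ together with whether $v_0\in W'$, equality occurring exactly at the extremal cut above.

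I expect the case $i=11$ to be the main obstacle. There, unlike for $H_6$--$H_{10}$, the vertex $t_3$ is the only vertex of $T$ adjacent to both $D_1$ and $D_2$, so a toughset of $H_{11}$ genuinely contains $t_3$ and the clean ``$T\cap W=\emptyset$'' reduction is unavailable; instead one has to bookkeep separately the deletions among the bridge vertices $\{v_3,v_4\}$, the outer vertices $\{v_1,v_2,v_5,v_6\}$, and $t_3$ itself, checking in each configuration that the resulting fragmentation of $D_1\cup D_2$ never beats the ratio $\frac{7}{6}$. Everything else is routine inspection of graphs on thirteen vertices.
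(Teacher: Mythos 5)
Your proposal is correct and follows essentially the same route as the paper: the same edge-by-edge check of $\alpha\bigl(H_i-(N_{H_i}[u]\cup N_{H_i}[v])\bigr)\le 4$ for $(P_2\cup 5P_1)$-freeness, the same extremal cuts $S\cup\{v_1,\dots,v_5\}$ (resp.\ $S\cup\{v_1,v_2,v_5,v_6,t_3\}$ for $H_{11}$) for the upper bound, and the same case split on whether $S\subseteq W$, followed by discarding $T$-vertices from $W$ and a finite check over $W\cap V(D)$, with $t_3$ of $H_{11}$ treated separately. The only cosmetic difference is that you phrase the lower bound as $6|W|\ge 7\,c(H_i-W)$ for all cutsets rather than arguing by contradiction from a toughset.
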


\pf   We show first that each $H_i$ is $(P_2\cup 5P_1)$-free. We do this only 
for the graph $H_6$, as the proofs for the rest graphs are similar. 
For any edge $ab \in E(H_6)$, we see $\alpha(H_6 - (N_{H_6}[a] \cup  N_{H_6}[b])) \leq 4 $. Thus no induced copy of $(P_2\cup 5P_1)$ can exist in $H_6$. Thus $H_6$ is $(P_2 \cup 5P_1)$-free.

We next show that $\tau(H_i)=\frac{7}{6}$ for each $i\in [6,10]$. 
We have $c(H_i-(S\cup \{v_1, \ldots, v_5\}))=6$,  implying $\tau(H_i) \le \frac{7}{6}$. Suppose $\tau(H_i) < \frac{7}{6}$. Let $W$ be a toughset of $H_i$. As each $H_i$ is 3-connected, we have 
$|W| \ge 3$.  Thus $c(H_i-W) \ge 3$. 
We have that either $S \subseteq W$ or $S \not\subseteq W$. Suppose the latter. Then we have $S \cap V(H_i-W) \ne \emptyset$.  Then all vertices in $T\setminus W$ are contained in the same component as the one which contains  $S\setminus W$.  Since $c(H_i-W) \ge 3$, by the structure of $H_i$, it follows that we have either $T\subseteq W$ or $\{v_1, \ldots, v_5\} \subseteq W$. In either case, we have $c(H_i-W) \le 3$, implying $\frac{|W|}{c(H_i-W)}  \ge  \frac{5}{3} >\frac{7}{6}$, a contradiction. So $S \subseteq W$. By Lemma~\ref{lem:tough-set}, $t_j \not\in W$ for all $j\in [1,5]$. Thus each $t_j \in V(H_i-W)$. Now either $v_0 \in W$ or $v_0 \not\in W$. Suppose $v_0 \in W$, then we cannot have all $v_j \in W$ without violating Lemma~\ref{lem:tough-set}. In this case, the minimum ratio  $\frac{|W|}{c(H_i-W)}$ occurs when $|W \cap \{v_1, v_2, v_3, v_4, v_5\}| =3$. This implies $\frac{|W|}{c(H_i-W} \geq \frac{6}{5} > \frac{7}{6}$, a contradiction. Thus $v_0 \not\in W$ and we must have $v_0 \in V(H_i-W)$. This implies $\{v_1 \ldots v_5\} \subseteq W$ and $\frac{|W|}{c(H_i-W)} = \frac{7}{6}$, a contradiction.
Thus $\tau(H_i)=\frac{7}{6}$ for each $i\in [6,10]$. 


Lastly we show $\tau(H_{11})=\frac{7}{6}$. We have $c(H_{11}-(S\cup \{v_1, v_2, t_3, v_4, v_5\}))=6$,  implying $\tau(H_{11}) \le \frac{7}{6}$. Suppose $\tau(H_{11}) < \frac{7}{6}$. Let $W$ be a tough set of $H_{11}$. As $H_{11}$ is 3-connected, we have $|W| \ge 3$.
Thus $c(H_{11}-W) \ge 3$.  We have that either $S \subseteq W$ or $S \not\subseteq W$. Suppose the latter. Then we have $S \cap V(H_{11}-W) \ne \emptyset$.  Then all vertices in $T\setminus W$ are contained in the same component as the one which contains  $S\setminus W$.  Since $c(H_{11}-W) \ge 3$, by the structure of $H_{11}$, it follows that $|W| \ge 5$ and $c(H_{11}-W) \le 4$. This implies $\frac{|W|}{c(H_{11}-W)}  \ge  \frac{5}{4} >\frac{7}{6}$, a contradiction. So $S \subseteq W$. By Lemma~\ref{lem:tough-set}, $t_i \not\in W$ for $i \in \{1,2,4,5\}$. Thus $t_i \in V(H_{11}-W)$ for $i \in \{1,2,4,5\}$ and we must have $W \cap \{v_1,v_2,v_3,v_4,v_5,v_6,t_3\}\ne \emptyset$. If $t_3 \not\in W$, then $\frac{|W|}{c(H_{11-W})} \ge \frac{6}{5}>\frac{7}{6}$, a contradiction. 
Thus $t_3 \in W$. Then $v_3$ and $v_4$ are respectively in two distinct components of $H_{11}-W$
by Lemma~\ref{lem:tough-set}.  Thus $W\cap \{v_1, v_2, v_5, v_6\} \ne \emptyset$
as $c(H_{11}-W) \ge 3$.  Furthermore, we have $c(H_{11}-W)=|W\cap \{v_1, v_2, v_5, v_6\}|+2$. 
The smallest ratio of $\frac{|W|}{c(H_{11}-W)}$ is $\frac{7}{6}$,
which happens when $\{v_1, v_2, v_5, v_6\} \subseteq W$. Again we get a contradiction to 
the assumption that   $\tau(H_{11}) < \frac{7}{6}$. 
Thus $\tau(H_{11}) = \frac{7}{6}$.
\qed 

The proof of Theorem~\ref{thm9} is complete. 
\qed 

\proof[Proof of Theorem~\ref{theorem4}] Let $p \ge 6$ and $D$ be the odd component of $H_5-(S\cup T)$. Note that $D=K_p$.  Since $c(H_5-(S\cup \{y_1, \ldots, y_5\}))=6$,  we have 
$\tau(H_5) \le \frac{7}{6}$. 
We show $\tau(H_5)  \ge  \frac{7}{6}$.
Let   $W$ be a toughset of $H_5$.  Then either $S \subseteq W$ or $S \not\subseteq W$. Suppose the latter. Then we have $S \cap V(H_5-W) \ne \emptyset$.  Then all vertices in $T\setminus W$ are contained in
 the same component as the one containing  $S\setminus W$.  Since $c(H_5-W) \ge 2$, by the structure of $H_5$, it follows that we have either $T\subseteq W$ or $\{y_1, \ldots, y_5\} \subseteq W$. In either case, we have $c(H_5-W) \le 3$, 
 implying $\frac{|W|}{c(H_5-W)}  \ge  \frac{5}{3} >\frac{7}{6}$.  
 Now suppose $S \subseteq W$. By Lemma~\ref{lem:tough-set}, $t_i \not\in W$ for all $i$. Thus each $t_i \in V(H_5-W)$.
 Furthermore, $c(H_5-W)=|W\cap V(D)|+1$. Since $W$ is a cutset of $G$, 
 we have $|W\cap V(D)| \ge 2$. The smallest ratio of $\frac{|W|}{c(H_5-W)}$ is $\frac{7}{6}$,
 which happens when $|W\cap V(D)| =5$.

For the graph $H_{12}$, we have $c(H_{12} - (S\cup \{y_1, y_2, y_3\}))=4$,  implying 
$\tau(H_{12}) \le \frac{4}{4} = 1$. 
We show $\tau(H_{12})  \ge  1$.
Let $W$ be a toughset of $H_{12}$.  Then either $S \subseteq W$ or $S \not\subseteq W$. Suppose the latter. Then we have $S \cap V(H_{12}-W) \ne \emptyset$.  Then all vertices in $T\setminus W$ are contained in
 the same component as the one containing  $S\setminus W$.  Since $c(H_{12}-W) \ge 2$, by the structure of $H_{12}$, it follows that we have either $T\subseteq W$ or $\{y_1, y_2, y_3\} \subseteq W$. In either case, we have $c(H_{12}-W) \le 2$, 
 implying $\frac{|W|}{c(H_{12}-W)}  \ge  \frac{3}{2} > 1$.
 Now suppose $S \subseteq W$. By Lemma~\ref{lem:tough-set}, $t_i \not\in W$ for all $i$. Thus each $t_i \in V(H_{12}-W)$. This implies $|\{y_1, y_2, y_3\} \cap W| = 2 \text{ or } 3$. In either case we see $\frac{|W|}{c(H_{12}-W)} = 1$.
\qed

\bibliographystyle{plain}
\bibliography{bibliography}

\end{document}